\documentclass[11pt]{amsart}

\usepackage[T1]{fontenc}
\usepackage[utf8]{inputenc}
\usepackage{lmodern}

\usepackage[american]{babel}
\usepackage[babel, final]{microtype}
\usepackage{amssymb}
\usepackage[all]{xy}
\usepackage{amsmath}

\usepackage{ifthen}
\usepackage{ifpdf}
\usepackage{multirow}
\usepackage[shortlabels]{enumitem}
\makeatletter
\newcommand{\mylabel}[2]{#2\def\@currentlabel{#2}\label{#1}}
\makeatother

\ifpdf
  \usepackage[pdftex, dvipsnames]{xcolor}
  \usepackage[pdftex, final]{graphicx}
  \usepackage{caption}
  \usepackage{subcaption}
  \usepackage{pinlabel}
  \usepackage[pdftex,%
    letterpaper,%
    includehead,%
    includefoot,%
    nomarginpar,%
    lmargin=1in,%
    rmargin=1in,%
    tmargin=1in,%
    bmargin=1in,%
  ]{geometry}
  \usepackage[pdftex,%
    final,%
    colorlinks=true,%
    linkcolor=NavyBlue,%
    citecolor=NavyBlue,%
    filecolor=NavyBlue,%
    menucolor=NavyBlue,%
    urlcolor=NavyBlue,%
    bookmarks=true,%
    bookmarksdepth=3,%
    bookmarksnumbered=true,%
    bookmarksopen=true,%
    bookmarksopenlevel=2,%
  ]{hyperref}
  \hypersetup{
    pdftitle={Double-point enhanced GRID invariants and Lagrangian cobordisms},
    pdfauthor={Ashton Lewis, Zachary Ojakli, Ina Petkova, Benjamin Shapiro},
    pdfsubject={Lagrangian cobordisms},
    pdfkeywords={Lagrangian cobordism, Legendrian knots, Heegaard Floer 
      homology}
  }
\else
  \usepackage[dvips, dvipsnames]{xcolor}
  \usepackage[dvips, final]{graphicx}
  \usepackage{caption}
  \usepackage{subcaption}
  \usepackage{pinlabel}
  \usepackage[dvips,%
    letterpaper,%
    includehead,%
    includefoot,%
    nomarginpar,%
    lmargin=1in,%
    rmargin=1in,%
    tmargin=1in,%
    bmargin=1in,%
  ]{geometry}
  \usepackage[ps2pdf,%
    final,%
    colorlinks=true,%
    linkcolor=NavyBlue,%
    citecolor=NavyBlue,%
    filecolor=NavyBlue,%
    menucolor=NavyBlue,%
    urlcolor=NavyBlue,%
    bookmarks=true,%
    bookmarksdepth=3,%
    bookmarksnumbered=true,%
    bookmarksopen=true,%
    bookmarksopenlevel=2,%
  ]{hyperref}
  \hypersetup{
    pdftitle={Double-point enhanced GRID invariants and Lagrangian cobordisms},
    pdfauthor=Ashton Lewis, Zachary Ojakli, Ina Petkova, Benjamin Shapiro},
    pdfsubject={Lagrangian cobordisms},
    pdfkeywords={Lagrangian cobordism, Legendrian knots, Heegaard Floer 
      homology}
  
\fi

\usepackage{mathdots}
\usepackage{float}
\usepackage[mathscr]{euscript}
\usepackage[normalem]{ulem}
\usepackage{tikz}
\usepackage{tikz-cd}
\usepackage{mathtools}
\usepackage{stmaryrd}
\SetSymbolFont{stmry}{bold}{U}{stmry}{m}{n}
\usetikzlibrary{arrows,automata,fit}
\usetikzlibrary{matrix,arrows}
\usetikzlibrary{decorations.pathreplacing,calligraphy}

\input{macros}


\hyphenation{Thurs-ton}
\hyphenation{mo-no-poles}
\hyphenation{sur-ger-y}


\newcommand{\set}[1]{\mathchoice%
  {\left\lbrace #1 \right\rbrace}%
  {\lbrace #1 \rbrace}%
  {\lbrace #1 \rbrace}%
  {\lbrace #1 \rbrace}%
}
\newcommand{\setc}[2]{\mathchoice%
  {\left\lbrace #1 \, \middle\vert \, #2 \right\rbrace}%
  {\lbrace #1 \, \vert \, #2 \rbrace}%
  {\lbrace #1 \, \vert \, #2 \rbrace}%
  {\lbrace #1 \, \vert \, #2 \rbrace}%
}
\newcommand{\paren}[1]{\mathchoice%
  {\left( #1 \right)}%
  {( #1 )}%
  {( #1 )}%
  {( #1 )}%
}

\newcommand{\abs}[1]{\mathchoice%
  {\left\lvert #1 \right\rvert}%
  {\lvert #1 \rvert}%
  {\lvert #1 \rvert}%
  {\lvert #1 \rvert}%
}


\newcommand{\dirsum}{\oplus}                 
\newcommand{\union}{\cup}                    
\newcommand{\intersect}{\cap}                

\newcommand{\comp}{\circ}                    
\newcommand{\cross}{\times}                  

\newcommand{\isom}{\cong}                    

\newcommand{\numset}[1]{\mathbb{#1}}

\newcommand{\Z}{\numset{Z}}

\newcommand{\R}{\numset{R}}



\DeclareMathOperator{\Id}{Id}


\DeclareMathOperator{\Cone}{Cone}




\newcommand{\bdy}{\partial} 




\newcommand{\xistd}{\xi_{\mathrm{std}}}
\DeclareMathOperator{\tb}{tb}
\DeclareMathOperator{\rot}{r}
\DeclareMathOperator{\self}{sl}


\newcommand{\opminus}[1]{#1^-}

\newcommand{\ophat}[1]{\widehat{#1}}
\newcommand{\optilde}[1]{\widetilde{#1}}

\newcommand{\curves}[1]{\boldsymbol{#1}}
\newcommand{\alphas}[1][]{%
  \ifthenelse{\equal{#1}{}}{\curves{\alpha}}{\curves{\alpha^{#1}}}
}
\newcommand{\betas}[1][]{%
  \ifthenelse{\equal{#1}{}}{\curves{\beta}}{\curves{\beta_{#1}}}
}

\newcommand{\gen}[1]{\mathbf{#1}}

\newcommand{\x}{\gen{x}}
\newcommand{\y}{\gen{y}}
\newcommand{\z}{\gen{z}}

\DeclareMathOperator{\gr}{gr}

\newcommand{\emptypoly}[1]{#1^\circ}


\newcommand{\grid}{\mathbb{G}}

\DeclareMathOperator{\GC}{GC}
\DeclareMathOperator{\GH}{GH}

\newcommand{\GCm}{\opminus{\GC}}

\newcommand{\GCt}{\optilde{\GC}}

\newcommand{\GHm}{\opminus{\GH}}
\newcommand{\GHh}{\ophat{\GH}}
\newcommand{\GHt}{\optilde{\GH}}

\newcommand{\markers}[1]{\mathbb{#1}}
\newcommand{\OO}{\markers{O}}
\newcommand{\XX}{\markers{X}}
\newcommand{\countJ}{\mathcal{J}}

\newcommand{\eRect}{\emptypoly{\Rect}}

\DeclareMathOperator{\Int}{Int}

\newcommand*{\alphastd}{\alpha_{\mathrm{std}}}

\renewcommand*{\tb}{\mathit{tb}}
\renewcommand*{\rot}{\mathit{r}}

\newcommand*{\fGC}{\mathcal{GC}}

\newcommand*{\fGCt}{\optilde{\fGC}}

\newcommand*{\Phih}{\ophat{\Phi}}

\newcommand{\GCmb}{\opminus{\GC}_{\mathit{big}}}
\newcommand{\GHtb}{\optilde{\GH}_{\mathit{big}}}
\newcommand{\GHhb}{\ophat{\GH}_{\mathit{big}}}
\newcommand{\GHmb}{\opminus{\GH}_{\mathit{big}}}

\newcommand{\GChb}{\ophat{\GC}_{\mathit{big}}}
\newcommand{\GCtb}{\optilde{\GC}_{\mathit{big}}}
\newcommand*{\fGCtb}{\fGCt_{\mathit{big}}}

\newcommand*{\fGCtbGp}{\fGCtb\paren{\Gp}}
\newcommand*{\fGCtbGm}{\fGCtb\paren{\Gm}}

\newcommand*{\Legm}{\Leg_-}
\newcommand*{\Legp}{\Leg_+}
\newcommand*{\Lag}{L}

\newcommand*{\lambdah}{\ophat{\lambda}}
\newcommand*{\lambdahp}{\lambdah^+}
\newcommand*{\lambdahm}{\lambdah^-}
\newcommand*{\lambdahpm}{\lambdah^\pm}

\newcommand*{\lambdat}{\optilde{\lambda}}
\newcommand*{\lambdatp}{\lambdat^+}
\newcommand*{\lambdatm}{\lambdat^-}
\newcommand*{\lambdatpm}{\lambdat^\pm}

\newcommand*{\commmapb}{\mathcal{C}_{\mathit{big}}}

\newcommand*{\oswapb}{\mathcal{P}_{O, \mathit{big}}}
\newcommand*{\xswapb}{\mathcal{P}_{X, \mathit{big}}}
\newcommand*{\birthmapb}{\mathcal{B}_{\mathit{big}}}

\newcommand*{\commmapxb}{C_{\mathit{big}}}

\newcommand*{\oswapxb}{P_{O, \mathit{big}}}

\newcommand{\birthemap}{e}
\newcommand{\birthpsimap}{\psi_{big}}
\newcommand{\birthpimap}{\Pi}

\newcommand*{\disjunion}{\sqcup}

\newcommand{\bira}{a}
\newcommand{\birb}{b}

\newcommand*{\AB}{\mathit{AB}}
\newcommand*{\NB}{\mathit{NB}}
\newcommand*{\AN}{\mathit{AN}}
\newcommand*{\NN}{\mathit{NN}}

\newcommand{\ABt}{\widetilde{\AB}}
\newcommand{\ANt}{\widetilde{\AN}}
\newcommand{\NBt}{\widetilde{\NB}}
\newcommand{\NNt}{\widetilde{\NN}}

\newcommand{\J}{\mathcal{J}}
\newcommand{\I}{\mathcal{I}}

\newcommand{\X}{\mathbb{X}}
\newcommand{\G}{\mathbb{G}}
\newcommand{\Gp}{\mathbb{G}_+}
\newcommand{\Gm}{\mathbb{G}_-}

\DeclareMathOperator{\grGH}{\widetilde{GH}}
\newcommand{\SG}{S(\G)}
\newcommand{\SGp}{S(\Gp)}
\newcommand{\SGm}{S(\Gm)}

\newcommand{\Leg}{\Lambda}
\newcommand{\Transv}{\mathrm{T}}

\newcommand{\xp}{\mathbf{x}^+}
\newcommand{\xm}{\mathbf{x}^-}
\newcommand{\xpm}{\mathbf{x}^\pm}
\newcommand{\lp}{\lambda^+}
\newcommand{\lm}{\lambda^-}

\newcommand{\np}{n^+}

\newcommand{\npb}{n^+_{\mathit{big}}}
\newcommand{\nmb}{n^-_{\mathit{big}}}

\newcommand{\std}{\mathrm{std}}

\newcommand{\bdt}{\widetilde{\partial}_{\OO}}
\newcommand{\bdtb}{\widetilde{\partial}_{\OO, \mathit{big}}}
\newcommand{\bdtox}{\widetilde{\partial}_{\OO, \XX}}
\newcommand{\bdtoxb}{\widetilde{\partial}_{\OO, \XX, \mathit{big}}}
\newcommand{\bdm}{\partial_{\XX}^-}
\newcommand{\bdmb}{\partial_{\XX, \mathit{big}}^-}

\newcommand{\dtab}{\widetilde{\delta}_\AB}
\DeclareMathOperator{\Rect}{Rect}
\DeclareMathOperator{\Tri}{Tri}
\DeclareMathOperator{\Pent}{Pent}

\newcommand{\hxnb}{\mathcal H_{X_2, \mathit{big}}^N}
\newcommand{\hxib}{\mathcal H_{X_2, \mathit{big}}^I}
\newcommand{\homb}{\mathcal H_{O_1, \mathit{big}}^-}
\newcommand{\hoxmb}{\mathcal H_{O_1, X_2, \mathit{big}}^-}

\newcommand{\RectGp}[2]{\Rect_{\Gp}(#1,#2)}

\newcommand{\RectAB}[2]{\Rect_{\AB}(#1,#2)}

\newcommand{\ABcomplex}{\paren{\ABt,\dtab}}

\newcommand{\lpb}{\lambda^+_{\mathit{big}}}
\newcommand{\lmb}{\lambda^-_{\mathit{big}}}
\newcommand{\lpmb}{\lambda^\pm_{\mathit{big}}}
\newcommand{\thetab}{\theta_{\mathit{big}}}
\newcommand{\thetahb}{\ophat{\theta}_{\mathit{big}}}

\newcommand*{\lhpb}{\lambdahp_{\mathit{big}}}
\newcommand*{\lhmb}{\lambdahm_{\mathit{big}}}
\newcommand*{\lhpmb}{\lambdahpm_{\mathit{big}}}

\newcommand*{\ltpb}{\lambdatp_{\mathit{big}}}
\newcommand*{\ltmb}{\lambdatm_{\mathit{big}}}
\newcommand*{\ltpmb}{\lambdatpm_{\mathit{big}}}

\newcommand {\al}{\alpha}

\newcommand {\be}{\beta}

\newcommand*{\cref}{\fullfref}

\newcommand{\xpr}{\x'}

\newcommand{\Pentx}{\mathrm{Pent}_\XX(\x,\y')}
\newcommand{\Pento}{\mathrm{Pent}_\OO(\x,\y')}

\newcommand{\Pentxl}{\mathrm{Pent}^\ell_\XX(\x,\y')}
\newcommand{\Pentol}{\mathrm{Pent}^\ell_\OO(\x,\y')}
\newcommand{\Pentl}{\mathrm{Pent}^\ell(\x,\y')}

\title{Double-point enhanced GRID invariants and Lagrangian cobordisms}

\author[A. Lewis]{Ashton Lewis}
\address{Department of Mathematics \\ Dartmouth College \\ Hanover, NH 03755}
\email{\href{mailto:ashton.r.lewis.25@dartmouth.edu}{ashton.r.lewis.25@dartmouth.edu}}

\author[Z. Ojakli]{Zachary Ojakli}
\address{Department of Mathematics \\ Dartmouth College \\ Hanover, NH 03755}
\email{\href{mailto:zachary.d.ojakli.25@dartmouth.edu}{zachary.d.ojakli.25@dartmouth.edu}}

\author[I. Petkova]{Ina Petkova}
\address{Department of Mathematics \\ Dartmouth College \\ Hanover, NH 03755}
\email{\href{mailto:ina.petkova@dartmouth.edu}{ina.petkova@dartmouth.edu}}
\urladdr{\url{https://math.dartmouth.edu/~ina/}}

\author[B. Shapiro]{Benjamin Shapiro}
\address{Department of Mathematics \\ Dartmouth College \\ Hanover, NH 03755}
\email{\href{mailto:benjamin.i.shapiro.gr@dartmouth.edu}{benjamin.i.shapiro.gr@dartmouth.edu}}

\begin{document}

\begin{abstract}
We define an invariant of Legendrian links in the double-point enhanced grid homology of a link, and prove that it obstructs decomposable Lagrangian cobordisms in the symplectization 
of the standard contact structure on $\R^3$.
\end{abstract}

\maketitle

\section{Introduction}\label{sec:intro}
 Two central and difficult problems in low-dimensional contact and symplectic geometry are to classify Legendrian and transverse links in a given contact manifold, as well as to understand whether there exists a Lagrangian cobordism from one Legendrian link to another. 
 This is challenging even in the simplest case where the underlying contact manifold is $\mathbb R^3$ with the standard contact structure
  \[
  \xi_{\std}=\ker(\alpha_{\std}),\quad \alpha_{\std}=dz-y\,dx
\] 
 and cobordisms are studied in the symplectization 
 \[(\R_t \cross \R^3, d (e^t \alphastd))\] 
 of $(\mathbb R^3, \xi_{\std})$. The classical invariants---the Thurston-Bennequin and rotation numbers of a Legendrian link and the self-linking number of a transverse link---do not provide a complete answer to either question. Thus, one is always on the hunt for new \emph{effective} invariants; that is, invariants that can be used to distinguish links with the same classical invariants, or to obstruct the existence of a cobordism when the classical invariants cannot. An example of such effective invariants are the so-called \emph{GRID invariants} in knot Floer homology. In the present article, we study an analogue of the GRID invariants in a slight variation of knot Floer homology, known as \emph{double-point enhanced knot Floer homology}, due to Lipshitz \cite{L06}.

Knot Floer homology is a powerful invariant of null-homologous knots and links in closed, oriented $3$-manifolds. It comes in various flavors, and generally takes the form of a chain complex, well-defined up to homotopy equivalence. It was originally defined using Heegaard diagrams and pseudo-holomorphic disks \cite{hfk, Ras05}. For links in $S^3$, there is a combinatorial description of knot Floer homology, called \emph{grid homology}, defined using a grid diagram presentation of the link \cite{MOS09, MOST07, OSS15}. The generators of the \emph{grid chain complex} are certain tuples of points on the grid, and the differential counts certain rectangles on the grid interpolating between pairs of generators. 

Given a grid $\grid$ that represents a Legendrian link $\Leg\subset (\mathbb R^3, \xi_{\std})$, Ozsv\'ath, Szab\'o, and Thurston  \cite{OST08} associate to it two \emph{canonical generators} $\xp(\grid)$ and $\xm(\grid)$ which are cycles in the grid chain complex, and show that the corresponding homology classes are invariants of the Legendrian link; those classes are denoted $\lp(\Leg)$ and $\lm(\Leg)$. From $\xp(\grid)$ one similarly gets an invariant $\theta(T)$ of the transverse link $ T \subset(\R^3, \xi_{\std})$ that is the positive transverse pushoff of $\Leg$. These invariants have been used with great success to distinguish pairs of Legendrian knots, as well as pairs of transverse knots, when the classical invariants cannot; see for example \cite{OST08, NOT08, CN13atlas}. They have also been used to obstruct the existence of exact decomposable Lagrangian cobordisms between Legendrian knots \cite{BLW22, JPSWW22}.

In \cite{L06}, Lipshitz proposed and studied a different version of knot Floer homology in which the differential counts more pseudo-holomorphic curves than the traditionally-studied differential; for an expanded discussion which also focuses on the combinatorial case, see \cite{L09}. In the grid-diagram formulation, this variation drops one of the conditions on the rectangles counted by the differential, namely that they be \emph{empty}; this formulation is described in \cite[Section 5.5]{OSS15} and referred to as \emph{double-point enhanced grid homology}. It is an open question whether the double-point enhancement gives more information than grid homology \cite[Problem 17.2.5]{OSS15}. It satisfies a skein relation just like grid homology, and for knots with 17 or fewer crossings and for quasi-alternating knots it can be recovered from grid homology \cite{T23}, but little else is known about the similarities and differences between the two theories. 

In this paper, we consider the canonical generators $\xp(\grid)$ and $\xm(\grid)$ and show that they give rise to effective Legendrian and transverse link invariants in double-point enhanced grid homology. We also show that these \emph{double-point enhanced GRID invariants} can effectively obstruct decomposable Lagrangian cobordisms. Before we state our results, we set up some basic notation, following \cite{OSS15}. 

The simplest version of double-point enhanced grid homology is the \emph{tilde} version $\GHtb$, which is the homology of a chain complex $\GCtb(\grid)$ defined over the ground ring $\mathbb F[v]$, where $\mathbb F = \Z/2\Z$ is the field with two elements; it corresponds to the tilde version of grid homology $\GHt$. Another version, which contains more information, is the \emph{minus} version $\GHmb$; for knots, it can be thought of as a module over $\mathbb F[U,v]$, and corresponds to $\GHm$. 

\subsection{The double-point enhanced GRID invariants}
\label{ssec:intro-invts}
Suppose $\grid$ represents a Legendrian link $\Leg\subset (\mathbb R^3, \xi_{\std})$, and let $\lpb(\grid)$ and $\lmb(\grid)$ denote the homology classes of $\xp(\grid)$ and $\xm(\grid)$ in $\GHmb(\grid)$. We prove that these homology classes are invariants of the Legendrian link type of $\Leg$:
\begin{theorem}
  \label{thm:legm}
  Suppose that $\grid$ and $\grid'$ are two grid diagrams that represent the same Legendrian link $\Leg \subset (S^3, \xistd)$. Then, there exists a bigraded isomorphism 
  \[ \Phi \colon \GHmb (\grid) \to \GHmb (\grid')\]
  with $\Phi(\lpb(\grid)) = \lpb(\grid')$ and $\Phi(\lmb(\grid)) = \lmb(\grid')$.
\end{theorem}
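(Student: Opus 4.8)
The plan is to follow the template used by Ozsv\'ath--Szab\'o--Thurston \cite{OST08} and Ozsv\'ath--Szab\'o--Stipsicz \cite{OSS15} to prove Legendrian invariance of the classical GRID invariants, now adapted to the double-point enhanced complex, carrying it out move by move in the sections that follow. A preliminary point is that $\xp(\grid)$ and $\xm(\grid)$ genuinely are cycles in $\GCmb(\grid)$: the classical proof that every rectangle emanating from $\xpm(\grid)$ crosses an $\XX$-marking uses only the position of $\xpm(\grid)$ relative to the $\XX$-markings and not the emptiness of the rectangle, and the enhanced differential still counts only rectangles disjoint from $\XX$. Since the bigradings of $\xpm(\grid)$ are also unchanged from the classical complex, and since two grid diagrams representing the same Legendrian link are connected by a finite sequence of commutations and of (de)stabilizations of the two types that preserve the Legendrian link type \cite{OST08} (see also \cite[Ch.~12]{OSS15}), it suffices to attach to each such elementary move a bigraded isomorphism on $\GHmb$ intertwining the two pairs of classes $\lpmb$.

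For a commutation, I would use the double-point enhanced analogue of the pentagon-counting map of \cite{OSS15}: it sends a generator $\x$ to the sum, over all pentagons from $\x$ that avoid $\XX$ (not only the empty ones), of the terminal generator weighted by the appropriate monomial in the $U_i$ and by $v$ raised to the number of generator points in the interior of the pentagon. The tasks are to check that this is a chain map for the enhanced differential, that it respects the bigrading, and that it is a quasi-isomorphism. For the last point one can filter $\GCmb$ by powers of $v$; the enhanced differential and the enhanced commutation map both respect this filtration, and on the associated graded the latter becomes the classical commutation quasi-isomorphism tensored with $\mathbb F[v]$, which suffices. Finally, since the pentagons out of a canonical generator are controlled by the $\XX$-markings exactly as the rectangles were, an appropriate choice of commutation map carries $\xpm(\grid)$ to $\xpm(\grid')$ on the chain level.

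The delicate case is a (de)stabilization, and I expect this to be the main obstacle. Following \cite[Ch.~12]{OSS15}, when $\grid$ is a stabilization of $\grid'$ one splits the generators of $\grid$ according to whether they occupy a distinguished intersection point of the stabilized diagram, and then, after the appropriate identification of variables, realizes $\GCmb(\grid)$---via a decomposition into a mapping cone and the cancellation of an acyclic summand---as a complex isomorphic to $\GCmb(\grid')$. In the enhanced theory the differential detects additional, non-empty domains, both inside the pieces of this decomposition and in the connecting map, so the mapping-cone splitting and, especially, the cancellation must be redone with $v$-weighted correction terms; one must confirm that the resulting change of basis is well defined, yields a bigraded isomorphism onto $\GHmb(\grid')$, and sends the canonical cycle of $\grid$ to that of $\grid'$. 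Since the destabilization quasi-isomorphism is already the technical core of the classical invariance proof, organizing these extra domains and their $v$-weighted contributions to the destabilization homotopy is where the work will concentrate.

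Composing the isomorphisms attached to a sequence of moves connecting $\grid$ to $\grid'$ then yields the desired $\Phi$. I note that $\GHmb$ is already known to be a link invariant \cite[Section~5.5]{OSS15}, following \cite{L06}, so that the real content of \fullref{thm:legm} is the assertion that the invariance maps can be chosen to carry the canonical classes along; either way, the chain-level maps described above are what make this visible. A completely analogous but simpler argument gives the corresponding statement for the tilde version $\GHtb$.
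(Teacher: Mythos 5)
Your overall strategy is the same as the paper's: reduce to commutations and (de)stabilizations of types \textit{X:SE} and \textit{X:NW}, handle commutations by an enhanced pentagon count, and adapt the mapping-cone destabilization argument of \cite{OSS15}; your preliminary observations that $\xpm$ remain cycles with unchanged bigradings and that the commutation map fixes the canonical generators are also correct. However, there is a concrete gap in the commutation step as you set it up: the map counting only embedded pentagons avoiding $\XX$, weighted by $v^{\#(\Int(p)\cap\x)}$, is \emph{not} a chain map. Once non-empty shapes are counted, there are domains admitting exactly two decompositions into a pentagon and a rectangle in which one of the two pentagons is a \emph{long} pentagon---a height-one pentagon embedded only in the universal cover whose projection wraps around the torus with multiplicity two somewhere (such shapes can avoid all markings, since the combined diagram contains thin annuli with no markings in their interior). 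In the other decomposition the rectangle contains a pentagon corner in its interior and hence carries an extra factor of $v$, and the only available cancelling contribution is the long pentagon counted with weight $v$; see \fullref{fig:longpent}. The paper's commutation map therefore includes a second sum over long pentagons, and without it an uncancelled term survives in $\bdmb\circ\commmapxb+\commmapxb\circ\bdmb$. (Your $v$-adic filtration argument for the quasi-isomorphism is fine in outline once the map is corrected, but it needs a convergence remark---for instance, each bigraded piece is finite dimensional and the Maslov grading is bounded below in each fixed Alexander grading, so invertibility of $v$ on the homology of the mapping cone forces it to vanish; the paper instead invokes an explicit hexagon-counting homotopy from \cite{T23}.)

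The same phenomenon is exactly what is missing from the step you defer. The paper's destabilization proof does follow the skeleton you describe (split the generators by the distinguished point $c$, realize $\GCmb$ of the stabilized diagram as a mapping cone, and compare it with $\Cone(V_1-V_2)$ via a map $\hxnb$ counting rectangles, not necessarily empty, whose interior meets $\XX$ exactly in $X_2$, weighted by powers of $v$), but the genuinely new work is not just ``reweighting the old cancellation'': the homotopy showing $\hxnb\circ\homb=\Id+\partial\circ\hoxmb+\hoxmb\circ\partial$ must in addition count \emph{long rectangles} of width one with multiplicity one at $O_1$ and $X_2$, which are what cancel the new juxtapositions whose interiors contain components of the generators (\fullref{fig:stab-inv}); one then checks commutativity of the comparison square and traces the canonical generators through it. Since your proposal neither identifies these long-shape correction terms (in either step) nor carries out the destabilization argument at all, the two pillars of the proof are left resting on precisely the feature that distinguishes the double-point enhanced theory from the classical one, and as written the commutation map would fail the chain-map check.
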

This justifies writing $\lpmb(\Leg)$ and calling these homology classes \emph{double-point enhanced Legendrian GRID invariants}.

Considering instead the quotient complex $\GChb(\grid) = \GCmb(\grid)/U$, one gets the \emph{simply blocked double-point enhanced grid homology}  $\GHhb(\grid)$. The grid states $\xpm(\grid)$ can be viewed as cycles in $\GChb(\grid)$. The corresponding homology classes $\lhpmb(\grid)\in \GHhb(\grid)$ are also Legendrian link invariants:
\begin{corollary}
  \label{thm:legh}
  Suppose that $\grid$ and $\grid'$ are two grid diagrams that represent the same Legendrian link $\Leg \subset (S^3, \xistd)$. Then, there exists a bigraded isomorphism 
  \[ \Phih \colon \GHhb (\grid) \to \GHhb (\grid')\]
  with $\Phih(\lhpb(\grid)) = \lhpb(\grid')$ and $\Phih(\lhmb(\grid)) = \lhmb(\grid')$.
\end{corollary}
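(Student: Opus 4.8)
The plan is to obtain the corollary as a formal consequence of \fullref{thm:legm} by reducing modulo $U$. Recall that
\[
\GChb(\grid) = \GCmb(\grid)/U = \GCmb(\grid) \otimes_{\mathbb F[U,v]} \mathbb F[v],
\]
that the grid states $\xpm(\grid)$ are cycles in $\GCmb(\grid)$, and that they map under the quotient $\GCmb(\grid) \to \GChb(\grid)$ to the cycles representing $\lhpmb(\grid)$. So it suffices to descend the data provided by \fullref{thm:legm} through this quotient.

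First I would recall that the proof of \fullref{thm:legm} realizes $\Phi$ as the map on homology induced by an $\mathbb F[U,v]$-linear chain homotopy equivalence $\phi \colon \GCmb(\grid) \to \GCmb(\grid')$, assembled as a composition of the maps attached to the elementary grid moves — (de)stabilizations, commutations, and the $X$/$O$ switching maps — connecting $\grid$ to $\grid'$. Each such map, and each chain homotopy witnessing that these composites are mutually homotopy inverse, is given by a count of polygons weighted by powers of $U$ and $v$, hence is a morphism of $\mathbb F[U,v]$-modules. Applying $-\otimes_{\mathbb F[U,v]}\mathbb F[v]$ therefore carries $\phi$ and its homotopy inverse, together with the homotopies, to a chain homotopy equivalence $\bar\phi \colon \GChb(\grid)\to\GChb(\grid')$ between the quotient complexes; set $\Phih = (\bar\phi)_*$. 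Since $\phi$ is bigraded and the bigrading on $\GChb$ is the one induced from $\GCmb$, the map $\Phih$ is bigraded.

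It remains to track the canonical classes. By \fullref{thm:legm}, $\phi(\xp(\grid))$ and $\xp(\grid')$ are homologous cycles of the same bidegree in $\GCmb(\grid')$, say $\phi(\xp(\grid)) = \xp(\grid') + \partial\eta$ with $\eta$ homogeneous, and likewise for the $-$ versions. Reducing mod $U$ gives $\bar\phi(\xp(\grid)) = \xp(\grid') + \partial\,\bar\eta$ in $\GChb(\grid')$, so $\Phih(\lhpb(\grid)) = \lhpb(\grid')$, and similarly $\Phih(\lhmb(\grid)) = \lhmb(\grid')$, which is the claim.

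The only substantive point is the $\mathbb F[U,v]$-equivariance invoked in the second step: one must check that the maps and homotopies appearing in the proof of \fullref{thm:legm} are genuinely $\mathbb F[U,v]$-linear, so that the whole package descends to the quotient by $U$. This is purely a matter of inspecting their definitions, which are all weighted polygon counts, and requires no further geometric input. Alternatively, one could argue via the naturality of the long exact sequence associated to $0 \to \GCmb(\grid) \xrightarrow{U} \GCmb(\grid) \to \GChb(\grid) \to 0$ together with the five lemma, using that $\Phi$ commutes with multiplication by $U$; but the chain-level reduction above is more direct and also delivers the statement about the canonical classes at the same time.
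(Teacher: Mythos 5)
Your proposal is correct and is essentially the paper's own argument: the paper deduces the corollary from \fullref{thm:legm} "by specializing to the quotient $\GHhb$," which is exactly the chain-level reduction you spell out, since the maps in \fullref{lem:comm} and \fullref{lem:stab} are weighted polygon counts and hence linear over the ground ring, so they descend to the quotient complex and carry $\xpm$ to (cycles homologous to) $\xpm$. The only cosmetic point is that for links the quotient is by $(V_{k_1},\dots,V_{k_l})$ rather than a single $U$, and the chain maps are $\mathbb F[V_1,\dots,V_n,v]$-linear, which is what makes the descent work verbatim.
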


Computationally, it is easier to work with the complex $\GCtb(\grid)$ than with $\GHhb (\grid)$. Therefore, we consider the homology classes $\ltpb(\grid),\ltmb(\grid)\in \GHtb(\grid)$ of $\xp(\grid), \xm(\grid)$ and show that they carry the same information as $\lhpb(\grid)$ and $\lhmb(\grid)$:
\begin{proposition}\label{prop:tilde-hat}
There is an injection 
\[\GHhb(\grid)\to \GHtb(\grid)\]
that sends 
$\lhpmb(\grid)$ to $\ltpmb(\grid)$. In particular, $\lhpmb(\Leg) = \lhpmb(\grid)$ is zero if and only if $\ltpmb(\grid)$ is zero.
\end{proposition}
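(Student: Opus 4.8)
The plan is to adapt to the double-point enhanced setting the standard comparison between $\lambdah^\pm$ and $\lambdat^\pm$ in ordinary grid homology \cite{OSS15}. Write $n$ for the size of $\grid$ and $U_1,\dots,U_n$ for the variables recording the $\OO$-markings, so that $\GCmb(\grid)$ is free over $\mathbb F[U_1,\dots,U_n,v]$ on the grid states and $\GChb(\grid)=\GCmb(\grid)/U$ is the quotient in which one of these variables is set to zero, still free over the polynomial ring in the remaining variables and in $v$. The first observation is that $\GCtb(\grid)$ is, on the nose, the further quotient of $\GChb(\grid)$ obtained by setting to zero all of the remaining $\OO$-variables: this forces the differential to count only rectangles disjoint from all of $\OO$, which is precisely the defining condition for $\GCtb$, and the $v$-powers are unchanged. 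Let $q\colon\GChb(\grid)\to\GCtb(\grid)$ denote this quotient chain map. Because the canonical generators $\xpm(\grid)$ are grid states --- they involve neither the $U_i$ nor $v$ --- they are literally the same elements of both complexes, so $q(\xpm(\grid))=\xpm(\grid)$ and hence $q_*(\lhpmb(\grid))=\ltpmb(\grid)$ on homology. It therefore suffices to show that $q_*$ is injective; the ``in particular'' clause is then immediate.

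To prove injectivity, I would factor $q$ into one-variable steps $\GChb(\grid)=C_n\to C_{n-1}\to\cdots\to C_1=\GCtb(\grid)$, where $C_{i-1}=C_i/U_i$. Each $C_i$ is free over a polynomial ring containing $U_i$, so multiplication by $U_i$ is injective on $C_i$ and there is a short exact sequence $0\to C_i\xrightarrow{U_i}C_i\xrightarrow{\pi_i}C_{i-1}\to 0$ (up to a grading shift). The key non-formal ingredient is that multiplication by $U_i$ is null-homotopic on $C_i$: on $\GCmb(\grid)$ any two of the variables $U_a,U_b$ lying on the same component of the link are chain homotopic, via the usual homotopy counting rectangles through a single $X$-marking and iterating \cite{OSS15}; since $\GChb(\grid)=\GCmb(\grid)/U$ already kills one such variable (here one should block one marking per component, as in the standard ``simply blocked'' convention), every remaining $U_i$ becomes null-homotopic there, and the homotopy, being linear over the polynomial ring, descends to each quotient $C_i$. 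Granting this, the long exact sequence of the displayed short exact sequence degenerates, since multiplication by $U_i$ is zero on $H(C_i)$, giving short exact sequences $0\to H(C_i)\xrightarrow{(\pi_i)_*}H(C_{i-1})\to H(C_i)\to 0$. Thus each $(\pi_i)_*$ is injective, hence so is $q_*=(\pi_2)_*\circ\cdots\circ(\pi_n)_*$. Unwinding, this realizes an isomorphism $\GHtb(\grid)\iso\GHhb(\grid)\otimes_{\mathbb F}V^{\otimes k}$ with $V=\mathbb F\oplus\mathbb F$ and $k=n-\ell$, under which $q_*$ is the inclusion $\alpha\mapsto\alpha\otimes(\text{top generator})$ and $\ltpmb(\grid)=\lhpmb(\grid)\otimes(\text{top generator})$; alternatively one could invoke such a decomposition directly if it is already available for the double-point enhanced theory.

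The step I expect to be the main obstacle is verifying that the homotopy exhibiting $U_a\simeq U_b$ on $\GCmb(\grid)$ genuinely survives the double-point enhancement: one must decorate its rectangle counts with the correct powers of $v$, drop the emptiness requirement, and check that the identity $\partial H+H\partial=U_a+U_b$ still holds, i.e.\ that the new non-empty contributions cancel in pairs with matching $v$-weights, exactly as in the proof that $\partial^2=0$ for the enhanced differential. Paired with this is the routine but necessary bookkeeping of the Maslov and Alexander gradings and the $v$-grading, so that all of the maps above are maps of bigraded $\mathbb F[v]$-modules. Everything else is formal homological algebra, and for Legendrian links the argument is unchanged once one has checked that every unblocked $U_i$ is chain homotopic on $\GCmb(\grid)$ to a blocked variable on its own component.
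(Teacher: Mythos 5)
Your argument is correct and essentially coincides with the paper's: the paper's proof is just the compressed version, observing that the projection $\GChb(\grid)\to\GCtb(\grid)$ carries $\xpm(\grid)$ to $\xpm(\grid)$ and induces an injection on homology ``for the same reasons as for the non-enhanced invariants,'' which is exactly the null-homotopy/short-exact-sequence argument you spell out. The one step you flag as a potential obstacle---that the chain homotopy relating the various $V_i$'s survives the double-point enhancement---is precisely the input behind the splitting $\GHtb(\grid)\cong\GHhb(\grid)\otimes W^{\otimes(n-l)}$ already quoted in the preliminaries, so it is available rather than a genuine gap.
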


Similarly, one obtains \emph{double-point enhanced transverse link invariants}: 

\begin{theorem}\label{thm:transverse}
  Suppose that $\grid$ and $\grid'$ are two grid diagrams whose corresonding Legendrian links have transversely isotopic positive transverse pushoffs. Then, there exists a bigraded isomorphism 
  \[ \Phi \colon \GHmb (\grid) \to \GHmb (\grid')\]
  with $\Phi(\lpb(\grid)) = \lpb(\grid')$.
\end{theorem}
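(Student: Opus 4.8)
The plan is to reduce the statement to \fullref{thm:legm} together with the behavior of the double-point enhanced GRID invariant under the single grid move that realizes a negative Legendrian stabilization. Recall the standard correspondence between transverse and Legendrian links (see, e.g., \cite[Chapters~12--13]{OSS15}, building on work of Epstein--Fuchs--Meyer): two Legendrian links in $(\R^3,\xistd)$ have transversely isotopic positive transverse pushoffs if and only if they are related by a finite sequence of Legendrian isotopies and negative Legendrian (de)stabilizations. On the level of grid diagrams this means that $\grid$ and $\grid'$ are connected by a finite sequence of moves, each of which is either (i) a grid move realizing a Legendrian isotopy --- a cyclic permutation, a commutation, or a (de)stabilization of one of the types that preserves Legendrian type --- or (ii) a grid (de)stabilization of the single type that realizes a negative Legendrian stabilization. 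It therefore suffices to produce, for each such move $\grid \rightsquigarrow \grid''$, a bigraded isomorphism $\GHmb(\grid)\to\GHmb(\grid'')$ carrying $\lpb(\grid)$ to $\lpb(\grid'')$, and then to compose them.

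For a move of type (i), the required isomorphism is furnished directly by \fullref{thm:legm}: a maximal run of type-(i) moves realizes a Legendrian isotopy, and \fullref{thm:legm} supplies a bigraded isomorphism carrying $\lpb$ (indeed also $\lmb$) to its counterpart. The new ingredient is the negative-stabilization move of type (ii), which I would treat by mirroring, in the double-point enhanced complex, the argument used for ordinary grid homology to prove that $\lp$ is a transverse invariant \cite{OST08, OSS15}. Writing $\grid'$ for the relevant stabilization of $\grid$, one first identifies $\GCmb(\grid')$, as a bigraded $\mathbb{F}[U,v]$-module, with a mapping cone of a chain map between two copies (up to grading shift) of $\GCmb(\grid)$, and then shows this mapping cone is chain homotopy equivalent to $\GCmb(\grid)$, exactly as in the destabilization argument, with the (now possibly non-empty) rectangles and pentagons of the enhanced differential and of the stabilization maps bookkept along the way. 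Finally one tracks the canonical cycle: the stabilization is set up so that $\xp(\grid')$ corresponds, under this identification, to $\xp(\grid)$ in the appropriate summand and the equivalence carries one to the other, so that the induced map on homology sends $\lpb(\grid)$ to $\lpb(\grid')$. (By contrast $\xm(\grid')$ involves the newly introduced marker, which is exactly why $\lmb$ is not claimed to be preserved and does not appear in the statement.)

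The main obstacle is precisely this last step: verifying that the double-point enhanced stabilization map is a bigraded isomorphism on homology and that it sends $\xp$ to $\xp$. Unlike in grid homology, the polygons counted in the differential and in the stabilization maps are no longer required to be empty, so one must re-examine the destabilization computation to confirm that the extra count does not break the $\mathbb{F}[U,v]$-module splitting, does not disturb the mapping-cone structure, and does not move the canonical generator; I expect this to go through because the local picture around the new $O$- and $X$-markers is unchanged, but it has to be checked against the enhanced differential and is the heart of the argument. Granting this, composing the isomorphisms along the sequence of moves produces the desired $\Phi$, and bigradedness is inherited because each elementary isomorphism is bigraded and the grading shifts introduced by the stabilizations cancel, just as in the Legendrian case.
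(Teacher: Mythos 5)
Your reduction---transverse isotopy of positive pushoffs corresponds to Legendrian isotopy together with negative Legendrian (de)stabilizations, the former handled by \fullref{thm:legm} and the latter by the \textit{X:SW} grid stabilization, under which $\xp$ is preserved while $\xm$ picks up a factor of $U$---is exactly the paper's proof of \fullref{thm:transverse}. The step you single out as the main obstacle is precisely Part~\ref{itm:s2} of \fullref{lem:stab}, which the paper has already established by the mapping-cone and chain-homotopy-equivalence argument you sketch (with long rectangles handling the non-empty domains), so it can simply be cited and your argument closes.
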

This justifies writing $\thetab(\Transv)$, where $\Transv$ is the positive transverse pushoff of the Legendrian link $\Leg$ represented by $G$. Similarly, we get an invariant $\thetahb(\Transv)\in \GHhb(\grid)$.

The double-point enhanced Legendrian GRID invariants behave in the same way under stabilization of the Legendrian knot as their counterparts in $\GHm$:
\begin{theorem}
  \label{thm:leg-stab}
  Suppose that $\grid$, $\grid^+$, and $\grid^-$ represent the Legendrian knot $\Leg$ and its positive and negative stabilizations $\Leg^+$ and $\Leg^-$, respectively. Then, there are bigraded isomorphisms
  \[\Phi^-\colon \GHmb(\grid)\longrightarrow \GHmb(\grid^-), \hspace{2cm} \Phi^+\colon \GHmb(\grid)\longrightarrow \GHmb(\grid^+)\]
such that 
\begin{align*}
\Phi^-(\lpb(\grid)) &= \lpb(\grid^-) \quad & U\cdot \Phi^+(\lmb(\grid)) &= \lmb(\grid^+)\\
U\cdot \Phi^-(\lmb(\grid)) &= \lmb(\grid^-)  & \Phi^+(\lpb(\grid)) &= \lpb(\grid^+)
\end{align*}
\end{theorem}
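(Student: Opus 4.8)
The plan is to carry out, over the larger ground ring $\mathbb F[U,v]$, the destabilization computation that proves the analogous statement for the undecorated invariants in $\GHm$ \cite{OST08} (see also \cite[Chapter 12]{OSS15}), checking at each step that the new variable $v$ does not interfere. First I would recall from \cite{OST08} that, after a sequence of grid commutations, a positive (resp.\ negative) Legendrian stabilization of $\Leg$ is realized by a single grid stabilization of a prescribed type at a prescribed corner; by \autoref{thm:legm} those preliminary commutations act on $\GHmb$ by bigraded isomorphisms fixing $\lpmb$, so I may assume $\grid^+$ (resp.\ $\grid^-$) arises from $\grid$ by one grid stabilization. It then suffices, for each of the two relevant stabilization types, to build a bigraded $\mathbb F[U,v]$-module isomorphism $\GHmb(\grid)\to\GHmb(\grid^{\pm})$ and to track the images of $\lpb(\grid)$ and $\lmb(\grid)$.

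Fixing one such stabilization --- say the one producing $\grid^-$, with distinguished corner $c$ --- I would split $\genset{\grid^-}$ according to occupancy of $c$ and observe, exactly as in the undecorated destabilization argument of \cite[Section 5.2]{OSS15}, that the double-point enhanced differential on $\GCmb(\grid^-)$ has a triangular structure in which the ``canceling'' rectangles are the short empty rectangles at $c$, counted with coefficient $v^{0}U^{0}=1$. Gaussian elimination along these pairs produces an $\mathbb F[U,v]$-linear chain homotopy equivalence from $\GCmb(\grid^-)$ onto a smaller complex supported on the $c$-containing states, which ``forget $c$'' identifies with $\GCmb(\grid)$. To make this precise I need: (i) the canceled rectangles are empty, so the elimination itself introduces no powers of $v$; and (ii) each composite (``zig-zag'') rectangle of $\grid^-$ that contributes to the induced differential maps, under forgetting $c$, either to a rectangle of $\grid$ with the same number of interior components of the relevant grid state, or to a juxtaposition of two such rectangles --- the latter occurring in canceling pairs, as in the proof that $\partial_{\mathit{big}}^{2}=0$. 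Granting (i)--(ii), the induced differential is precisely the double-point enhanced differential of $\grid$; the equivalence is bigraded, since $v$ carries its usual bidegree and the Maslov and Alexander shifts combine as in \cite[Chapter 5]{OSS15}; and passing to homology gives $\Phi^-$. Reducing modulo $v$ recovers the classical destabilization equivalence, hence the $\GHm$ version of the statement \cite{OST08}, which is a useful consistency check.

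Next I would locate the canonical generators relative to this cancellation. For the stabilization type at hand, one of $\xp(\grid^-),\xm(\grid^-)$ contains a point at $c$ and is carried by ``forget $c$'' to the matching canonical generator of $\grid$; being a cycle, it is untouched by the Gaussian elimination, so $\Phi^-$ sends that GRID invariant of $\grid$ to the corresponding one of $\grid^-$ on the nose. The other canonical generator of $\grid^-$ lies in the column of $c$ but does not contain $c$; analyzing the single zig-zag through its canceling partner shows that, in the reduced complex, it becomes homologous to $U$ times the image of the matching canonical generator of $\grid$, which gives the remaining identity. Both identifications are the verbatim double-point analogues of those in \cite[Chapter 12]{OSS15}, the $v$-weights playing no role because every rectangle involved is empty. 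Running the same argument for the stabilization type producing $\grid^+$ yields $\Phi^+$ and its two identities.

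The step I expect to be the main obstacle is item (ii): verifying that the passage to the canceled complex over $\mathbb F[U,v]$ produces no spurious powers of $v$, i.e.\ that every non-empty rectangle of $\grid^-$ entering the induced differential is matched under destabilization with a rectangle (or a canceling pair of rectangles) of $\grid$ of the same double-point count. This amounts to a finite, but somewhat delicate, local analysis of how a rectangle meeting the stabilization region decomposes after forgetting $c$, keeping track of $|\x\cap\Int(r)|$ throughout; it is exactly what guarantees that the coefficients appearing in the four displayed identities are honest powers of $U$ rather than $Uv^{k}$.
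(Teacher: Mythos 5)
Your reduction to a single grid stabilization (via \fullref{thm:legm}) matches the paper, which deduces the theorem from \fullref{thm:legm} together with Parts~\ref{itm:s2} and \ref{itm:s3} of \fullref{lem:stab}. But your proposed proof of the destabilization step itself has a structural gap. The paper does \emph{not} destabilize by Gaussian elimination: it writes $\GCmb(\grid^\pm)$ as the mapping cone of the differential component between the submodule $I$ (states containing $c$) and the quotient $N$ (states not containing $c$), identifies $\GHmb(\grid)$ with the homology of $\Cone\bigl(V_1-V_2\colon \GCmb(\grid)[V_1]\to\GCmb(\grid)[V_1]\bigr)$, and compares the two cones via $v$-weighted maps counting rectangles through $X_2$, with homotopies ($\homb$, $\hoxmb$) that must count \emph{long} rectangles --- the genuinely new double-point phenomenon. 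Your claim that cancelling the unit-coefficient ``short empty rectangles at $c$'' yields a homotopy equivalence onto a complex supported on the $c$-containing states with induced differential equal to $\bdmb$ of $\grid$ cannot be made to work. First, the quotient complex $N$ is not contractible over $\mathbb{F}[V_1,\dots,V_n,v]$ --- it is itself homotopy equivalent to $\GCmb(\grid)[V_1]$ (this is exactly what the map $\hxnb$, resp.\ $\hxib$, establishes in the paper) --- so no sequence of unit-coefficient cancellations can eliminate it; concretely, the empty $1\times1$ square at $c$ only connects a state containing $c$ \emph{and} the diagonally opposite corner of that square to its partner, so the proposed pairing does not leave exactly the $c$-containing states. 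Second, even granting such a reduction, a complex supported on $I$ with the destabilized differential would have homology $\GHmb(\grid)[V_1]$ over the ambient ring, which is too large: the whole content of minus-flavor destabilization is the bookkeeping of the extra variable $V_1$ through the cone of $V_1-V_2$, and your outline omits it entirely. Your item (ii) (no spurious $v$-powers in composite domains) is a real issue, but it is downstream of this structural one.

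A smaller but telling inaccuracy: for the stabilization types relevant here (\textit{X:SW} and \textit{X:NE}), \emph{neither} canonical generator contains $c$ --- both $\xp(\grid^\pm)$ and $\xm(\grid^\pm)$ lie in $N$ (it is for types \textit{X:SE} and \textit{X:NW} that they contain $c$). The factor of $U$ in $U\cdot\Phi^-(\lmb(\grid))=\lmb(\grid^-)$ arises because the unique thin rectangle from $\xm(\grid^-)$ counted by the comparison map $\hxib$ crosses an $O$-marking, not from ``a zig-zag through a cancelling partner.'' So while your high-level conclusion and the consistency check against the classical $\GHm$ statement are fine, the key lemma you need --- the double-point enhanced analogue of stabilization invariance for the unblocked theory --- is not established by the argument you sketch; it requires the mapping-cone comparison (or some substitute that genuinely handles $V_1$), together with the long-rectangle homotopy analysis carried out in \fullref{lem:stab}.
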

 Descending to the hat version, we obtain the following vanishing result:
\begin{corollary}
\label{cor:stab-vanishing}
If $\Leg$ is the positive stabilization of another Legendrian knot, then $\lhpb(\Leg) = 0$. Similarly, If $\Leg$ is the negative stabilization of another Legendrian knot, then $\lhmb(\Leg) = 0$.
\end{corollary}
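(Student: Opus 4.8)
The plan is to deduce this formally from \autoref{thm:leg-stab}, together with the elementary relationship between the minus and the hat flavors of the theory.

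First I would record the (purely formal) compatibility of the canonical classes with the passage from $\GHmb$ to $\GHhb$. Since $\GChb(\grid)=\GCmb(\grid)/U$, the quotient map $\pi\colon\GCmb(\grid)\to\GChb(\grid)$ is an $\mathbb F[U]$-linear chain map whose target is annihilated by $U$, and it carries each canonical cycle $\xpm(\grid)$ to the corresponding cycle $\xpm(\grid)$ of $\GChb(\grid)$. Hence the induced map $\pi_*\colon\GHmb(\grid)\to\GHhb(\grid)$ satisfies $\pi_*\bigl(\lpmb(\grid)\bigr)=\lhpmb(\grid)$, and $\pi_*(U\cdot a)=0$ for every $a\in\GHmb(\grid)$.

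In the positive-stabilization case I would then fix a Legendrian knot $\Leg_0$ with $\Leg=\Leg_0^+$, choose a grid $\grid_0$ for $\Leg_0$, and pass to a stabilized grid $\grid_0^+$ for $\Leg$ as in \autoref{thm:leg-stab}. That theorem furnishes a bigraded isomorphism $\Phi^+\colon\GHmb(\grid_0)\to\GHmb(\grid_0^+)$ under which $\lpb(\grid_0^+)$ is divisible by $U$, namely $\lpb(\grid_0^+)=U\cdot\Phi^+\bigl(\lpb(\grid_0)\bigr)$. By \autoref{thm:legh} the vanishing of $\lhpb(\Leg)$ may be tested on the single grid $\grid_0^+$, so applying $\pi_*$ gives
\[
\lhpb(\Leg)=\pi_*\bigl(\lpb(\grid_0^+)\bigr)=\pi_*\Bigl(U\cdot\Phi^+\bigl(\lpb(\grid_0)\bigr)\Bigr)=0 .
\]
The negative-stabilization statement is the identical argument, run with the relation $\lmb(\grid_0^-)=U\cdot\Phi^-\bigl(\lmb(\grid_0)\bigr)$ of \autoref{thm:leg-stab} in place of the one for $\Phi^+$.

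I do not anticipate any genuine obstacle: all of the substance is contained in \autoref{thm:leg-stab}, and what remains is bookkeeping. The one point I would take care to spell out is the assertion of the second paragraph, that ``the class of $\xpm$ in $\GHhb$'' and ``the image of $\lpmb$ under $\pi_*$'' literally coincide; this is immediate once one notes that $\pi$ simply forgets the $U$-action while fixing the underlying generators, and hence sends a cycle representing $\lpmb(\grid)$ to one representing $\lhpmb(\grid)$.
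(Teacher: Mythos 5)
Your argument is correct and is essentially the paper's own proof, which deduces the corollary from \fullref{thm:leg-stab} by specializing to the hat theory; you simply make explicit the quotient chain map $\GCmb(\grid)\to\GChb(\grid)=\GCmb(\grid)/U$ carrying $\lpb,\lmb$ to $\lhpb,\lhmb$ and killing $U$-multiples, together with grid-independence via \fullref{thm:legh}. One small remark: the relation you invoke, $\lpb(\grid^+)=U\cdot\Phi^+(\lpb(\grid))$, is the intended one (it is what \fullref{lem:stab}~\ref{itm:s3} gives and what the corollary needs), whereas the displayed equations of \fullref{thm:leg-stab} as printed have the $\pm$ superscripts in the positive-stabilization column transposed, so you have in effect quietly corrected a typo in the theorem statement rather than introduced an error.
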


\subsection{Obstructions to decomposable Lagrangian cobordisms}
\label{ssec:intro-obstr}

We show that the double-point enhanced GRID invariants obstruct decomposable Lagrangian cobordisms in $(\R_t \cross \R^3, d (e^t \alphastd))$, that is, cobordisms that can be obtained as compositions of elementary exact Lagrangian cobordisms. 

\begin{theorem}
  \label{thm:cob}
  Suppose that $\Leg_-$ and $\Leg_+$ are Legendrian links in $(\R^3, \xistd)$, 
  such that
  \begin{itemize}
    \item $\lhpb(\Leg_+)=0$ and $\lhpb(\Leg_-)\neq 0$; or
    \item $\lhmb(\Leg_+)=0$ and $\lhmb(\Leg_-)\neq 0$.
  \end{itemize}
  Then there does not exist a decomposable Lagrangian cobordism from $\Leg_-$ 
  to $\Leg_+$.
\end{theorem}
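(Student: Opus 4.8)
The plan is to realize a decomposable Lagrangian cobordism as a composition of elementary pieces, to attach to each elementary piece a map on double-point enhanced grid homology running backwards along the cobordism and natural with respect to the double-point enhanced GRID invariants, and then to compose. Recall that, by definition, a decomposable Lagrangian cobordism from $\Leg_-$ to $\Leg_+$ in $(\R_t \cross \R^3, d(e^t \alphastd))$ is a composition of elementary exact Lagrangian cobordisms, each of which is either (i) a Legendrian isotopy, (ii) a minimum cobordism, which adds a standard Legendrian unknot split from the rest of the link, or (iii) a saddle (pinch) cobordism. So it suffices to prove the following claim: for each elementary cobordism from a Legendrian link $\Leg$ at the bottom to a Legendrian link $\Leg'$ at the top, there are grid diagrams $\grid$, $\grid'$ for $\Leg$, $\Leg'$ and a bigraded homomorphism $F \colon \GHhb(\grid') \to \GHhb(\grid)$ with $F(\lhpb(\grid')) = \lhpb(\grid)$ and $F(\lhmb(\grid')) = \lhmb(\grid)$. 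Granting the claim, suppose toward a contradiction that a decomposable Lagrangian cobordism from $\Leg_-$ to $\Leg_+$ exists; composing the maps $F$ of its elementary pieces along the decomposition produces a homomorphism $\GHhb(\Leg_+) \to \GHhb(\Leg_-)$ sending $\lhpb(\Leg_+)$ to $\lhpb(\Leg_-)$ and $\lhmb(\Leg_+)$ to $\lhmb(\Leg_-)$; under either bulleted hypothesis this sends $0$ to a nonzero class, which is impossible. It remains to establish the claim.

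Case (i) needs nothing new: a Legendrian isotopy is realized by a finite sequence of commutations and Legendrian-type (de)stabilizations of grids, and the associated maps $\commmapb$, $\stabmapb$, $\destabmapb$ on the ``big'' complexes, used in the proof of \autoref{thm:legm}, are bigraded homotopy equivalences carrying the canonical cycles $\xp$, $\xm$ to $\xp$, $\xm$; reducing modulo $U$ and passing to homology gives the required $F$, which is an isomorphism in this case. Case (ii) is local: the minimum cobordism is realized on grids by inserting a small block far from the rest of $\grid$, so that the behavior of the birth map $\birthmapb$ on the canonical generators is the same as in grid homology, the only additional bookkeeping forced by the disjointness being $v$-equivariance, which is immediate. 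In both cases one may instead build $F$ directly on the simply blocked complexes $\GChb$.

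The substance of the proof is case (iii), which I expect to be the main obstacle. A saddle cobordism is realized on grids by the local pinch modification of \cite{BLW22, JPSWW22}, with associated map $\pinchmapb$ assembled from $\oswapb$ and $\xswapb$; one must show that $\pinchmapb$ induces a bigraded chain map on the simply blocked complexes $\GChb$ for the double-point enhanced differential and that it carries the canonical cycles $\xp(\grid')$, $\xm(\grid')$ to $\xp(\grid)$, $\xm(\grid)$. Both points are subtler than in grid homology because the enhanced differential also counts rectangles that are \emph{not} empty, weighted by powers of $v$ recording how many times the rectangle covers the $\OO$-markers: the polygon-count identities behind ``$\pinchmapb$ is a chain map'' and behind ``$\pinchmapb$ preserves the canonical cycle'' must be re-derived so as to include these non-empty rectangles and the corresponding non-empty pinch polygons, with every new contribution cancelling against another of matching $v$-weight. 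Once this is in place, $\pinchmapb$ supplies the map $F$ of case (iii), and the composition described above finishes the proof.
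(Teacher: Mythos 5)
Your overall strategy---decompose the cobordism into elementary pieces, attach to each piece a map that preserves the canonical classes, compose, and derive a contradiction from the vanishing/nonvanishing hypotheses---is exactly the weak-functoriality strategy the paper uses (\fullref{thm:functoriality}). But as written your argument has a genuine gap, and it sits precisely where you flag ``the main obstacle'': you never establish that the pinch map (nor, in fact, the birth map) is a chain map for the double-point enhanced differential and carries $\xpm$ to $\xpm$. Saying that the polygon-count identities ``must be re-derived so that every new contribution cancels against another of matching $v$-weight'' states the problem rather than solving it; in the paper this is the content of \fullref{prop:x-swap-filt}, \fullref{prop:o-swap-filt}, and \fullref{lem:birth}, which require new case analysis not present in \cite{BLW22,JPSWW22} (pentagons and rectangles containing corners of the other shape in their interior, the new canceling configurations for birth domains, and the grading computation showing that $k$ interior double points drop the Maslov grading by $2k$, exactly compensated by the weight $v^k$). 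Since the enhanced differential counts non-empty rectangles, none of this can be quoted from the non-enhanced literature, and your case (ii) is also not ``immediate'': the rectangles counted by the birth map are not local to the new $2\times 2$ block and in the enhanced setting may contain components of the grid states, which is why the paper needs the extra verifications of \fullref{lem:birth}.

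There is also a structural issue with the flavor of the theory you work in. You propose to build all the elementary maps directly on the simply blocked complexes $\GChb$, but the paper---following \cite{BLW22,JPSWW22}---constructs the pinch and birth maps only on the filtered fully blocked complexes $\fGCtb$, descends to the associated graded objects $\GHtb$ (where the classes are $\ltpmb$), and then converts the hat-level hypotheses of the theorem into tilde-level ones via the injection $\GHhb \hookrightarrow \GHtb$ of \fullref{prop:tilde-hat}; this comparison step is the missing idea in your write-up. A pinch map defined directly on $\GChb$ would have to count pentagons crossing $O$-markings with $V_i$-weights and cope with the fact that a pinch changes the number of link components, hence which variables $V_{k_i}$ are set to zero; no such map is constructed in the paper or in the sources it builds on, and it is not clear the required identities hold in that setting. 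So case (iii) of your plan is doubly unsupported: the key chain-map and class-preservation statements are unproven, and they are asserted in a flavor of the theory where they are not known to exist. The repair is to run your composition argument with the tilde classes $\ltpmb$ in $\GHtb$, use Legendrian isotopy invariance only through its consequences there, and finish by transferring the hypotheses and conclusion through \fullref{prop:tilde-hat}.
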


Specializing to the case where $\Legm$ is the undestabilizable Legendrian unknot, we obtain the following:

\begin{corollary}
  \label{cor:filling}
  Suppose that $\Leg$ is a Legendrian link in $(\R^3, \xistd)$, such that either
  $\lhpb(\Leg) = 0$ or $\lhmb(\Leg) = 0$.  Then there does not exist a decomposable Lagrangian filling of $\Leg$.
\end{corollary}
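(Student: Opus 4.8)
\section*{Proof proposal for \autoref{cor:filling}}

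The plan is to derive the corollary from \autoref{thm:cob} by taking $\Legm$ to be the standard Legendrian unknot $\unknot_{\std}$ (the one with $\tb=-1$ and $\rot=0$). Recall that a decomposable Lagrangian filling of $\Leg$ is, by definition, a decomposable Lagrangian cobordism whose negative end is the empty Legendrian link. In any such cobordism the first elementary piece has empty incoming boundary, so it must be a \emph{birth} (a minimum / $0$-handle attachment), whose outgoing boundary is $\unknot_{\std}$ --- the unique Legendrian unknot admitting no destabilization. Removing this first piece exhibits the remainder of the filling as a decomposable Lagrangian cobordism from $\unknot_{\std}$ to $\Leg$. Hence it suffices to show that no decomposable Lagrangian cobordism from $\unknot_{\std}$ to $\Leg$ exists, which will follow from \autoref{thm:cob} applied with $\Legm=\unknot_{\std}$ and $\Legp=\Leg$ once we know that $\lhpb(\unknot_{\std})\neq 0$ and $\lhmb(\unknot_{\std})\neq 0$.

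For the nonvanishing, represent $\unknot_{\std}$ by its minimal $2\times 2$ grid diagram $\grid_0$. Every rectangle in $\grid_0$ occupies at least one of the four cells, and each cell of $\grid_0$ contains an $\OO$- or $\XX$-marking, so the $\OO,\XX$-avoiding differential is identically zero on both $\GCt(\grid_0)$ and $\GCtb(\grid_0)$. Consequently $\GHtb(\grid_0)=\GCtb(\grid_0)$ agrees with $\GHt(\grid_0)=\GCt(\grid_0)$, and under this identification $\ltpb(\grid_0)$ and $\ltmb(\grid_0)$ are nonzero generators. By \autoref{prop:tilde-hat} the injection $\GHhb(\grid_0)\to\GHtb(\grid_0)$ carries $\lhpmb(\unknot_{\std})$ to $\ltpmb(\grid_0)$, so $\lhpb(\unknot_{\std})\neq 0$ and $\lhmb(\unknot_{\std})\neq 0$.

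It remains to combine these facts. If $\lhpb(\Leg)=0$, then with $\Legp=\Leg$ and $\Legm=\unknot_{\std}$ we have $\lhpb(\Legp)=0$ and $\lhpb(\Legm)\neq 0$, so the first bullet of \autoref{thm:cob} forbids a decomposable Lagrangian cobordism from $\unknot_{\std}$ to $\Leg$, and hence a decomposable Lagrangian filling of $\Leg$. If instead $\lhmb(\Leg)=0$, the identical argument using the second bullet of \autoref{thm:cob} applies. The only steps with content beyond quoting \autoref{thm:cob} are the topological observation that a decomposable filling begins with a birth of $\unknot_{\std}$ and the base-case computation $\lhpmb(\unknot_{\std})\neq 0$; the latter is the place where one must take care to work in the double-point enhanced theory rather than ordinary grid homology, but the minimal-grid argument above together with \autoref{prop:tilde-hat} settles it.
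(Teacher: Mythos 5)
Your proof is correct and follows essentially the same route as the paper: decompose the filling as a birth producing the undestabilizable Legendrian unknot followed by a decomposable Lagrangian cobordism to $\Leg$, then apply \autoref{thm:cob} using $\lhpmb(\unknot_{\std}) \neq 0$. The only difference is that you spell out the nonvanishing for the unknot via the $2\times 2$ grid (where the differential vanishes) together with \autoref{prop:tilde-hat}, a step the paper simply asserts.
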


As in \cite{BLW22}, we prove \fullref{thm:cob} by showing that the tilde versions of the double-point enhanced GRID invariants satisfy a weak functoriality under decomposable Lagrangian cobordisms:

\begin{theorem}
  \label{thm:functoriality}
  Suppose that $\grid_-$ and $\grid_+$ are two grid diagrams that represent Legendrian links $\Legm$ and $\Legp$ in $(\R^3, \xistd)$ respectively. Suppose that there exists a decomposable Lagrangian cobordism $\Lag$ from $\Legm$ to $\Legp$. Then there exists a  homomorphism
  \[
    \Psi \colon \GHtb (\grid_+) \to \GHtb (\grid_-) \left\llbracket - \chi (\Lag), \frac{\abs{\Leg_+} - \abs{\Leg_-} - \chi (\Lag)}{2} \right\rrbracket,
  \]
  such that 
   \[ \Psi (\ltpmb(\grid_+)) = \ltpmb(\grid_-).
  \]
\end{theorem}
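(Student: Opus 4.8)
The plan is to follow the decomposable-cobordism strategy of \cite{BLW22}, reducing the statement to the case of a single elementary cobordism and then composing. Since a decomposable Lagrangian cobordism $\Lag$ from $\Legm$ to $\Legp$ is by definition a composition of elementary pieces, and the grading shift $\llbracket - \chi(\Lag), (\abs{\Legp}-\abs{\Legm}-\chi(\Lag))/2 \rrbracket$ is additive under composition (using $\chi$ additivity and $\abs{\cdot}$ telescoping through the intermediate links), it suffices to produce, for each elementary cobordism, a graded map on $\GHtb$ carrying $\ltpmb$ to $\ltpmb$ with the prescribed shift, and then take $\Psi$ to be the composite. The elementary cobordisms come in three types: Legendrian isotopy, a pinch move (an oriented saddle), and the birth of a Legendrian unknot with maximal Thurston--Bennequin number; I would handle each in turn.

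First, for Legendrian isotopy, the cobordism is the trivial cylinder, $\chi = 0$ and $\abs{\Legp} = \abs{\Legm}$, so the shift is trivial; the required map is the isomorphism from \fullref{thm:legm} (transported to the tilde version via the analogue of \fullref{prop:tilde-hat}, or more directly by assembling the grid-move maps on $\GCtb$), which by construction sends $\ltpmb$ to $\ltpmb$. Here one must check that the Legendrian isotopy can be realized by a sequence of the elementary grid moves (commutations, (de)stabilizations) that preserve the Legendrian link type, which is the content of the Legendrian grid-move calculus already used in proving \fullref{thm:legm}. Second, for the birth move: the cobordism has $\chi = 1$ and adds one component, so the shift is $\llbracket -1, 0 \rrbracket$; the relevant map $\GHtb(\grid_+ \disjunion \unknot) \to \GHtb(\grid_+)$ should be induced by the quasi-isomorphism expressing $\GCtb$ of a disjoint union with a one-by-one (or suitably small) grid for the $\tb = -1$ unknot in terms of $\GCtb(\grid_+)$, and one verifies the canonical generators behave as expected because the canonical generator of the small unknot grid is a generator of its grid homology and the Künneth-type identification is multiplicative on the $\xpm$.

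The main work, and the main obstacle, is the pinch move. Following \cite{BLW22}, the oriented saddle connecting $\Legm$ to $\Legp$ (with $\chi = -1$ and $\abs{\cdot}$ changing by $\pm 1$, giving the shift $\llbracket 1, (\abs{\Legp}-\abs{\Legm}-1+2)/2\rrbracket = \llbracket 1, (\abs{\Legp}-\abs{\Legm}+1)/2\rrbracket$, matching the formula with $\chi = -1$) is realized on a grid diagram as a local replacement, and one must define a chain map $\pinchmapb \colon \GCtb(\grid_+) \to \GCtb(\grid_-)$ counting the relevant (possibly non-empty) pentagons, show it is a chain map for the double-point enhanced differential, compute its grading shift, and prove $\pinchmapb(\xpm(\grid_+)) = \xpm(\grid_-)$ up to boundary. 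The delicate point is that dropping the emptiness condition in the double-point enhanced theory changes which pentagons contribute and introduces the $v$-powers recording double points, so one cannot simply cite the grid-homology computation: I would re-examine the domains appearing in $\diff \comp \pinchmapb + \pinchmapb \comp \diff$ and the domains connecting $\xpm(\grid_+)$ to $\xpm(\grid_-)$, checking that the non-empty contributions either cancel in pairs or are absorbed by $v$, and that the count of pentagons with a corner at the canonical generator is exactly one. This is essentially the local analysis of \cite[Section 5.5]{OSS15} and \cite{BLW22} redone with the enhanced differential; I expect it to go through because the local picture near the saddle is unchanged and the extra curves counted are controlled by the same region bookkeeping, but verifying the signs-free $\Ftwo$ cancellations and the $v$-grading is where the real effort lies.

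Finally, I would assemble $\Psi$ as the composition of the maps for the elementary pieces in the order dictated by the decomposition of $\Lag$, note that each piece sends $\ltpmb$ to $\ltpmb$ so the composite does as well, and add the grading shifts to recover $\llbracket -\chi(\Lag), (\abs{\Legp}-\abs{\Legm}-\chi(\Lag))/2 \rrbracket$; one small check is that the intermediate grid diagrams can be chosen compatibly so that the composition is literally defined, which follows since between consecutive elementary moves one is free to apply a Legendrian isotopy (handled by the first case) to line up the diagrams. \fullref{thm:cob} and \fullref{cor:filling} then follow by passing to the hat version via \fullref{prop:tilde-hat}: if $\ltpb$ or $\ltmb$ vanishes upstairs it must vanish downstairs, contradicting $\lhpmb(\Legm) \neq 0$.
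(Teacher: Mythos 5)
Your overall architecture is the paper's: decompose $\Lag$ into elementary pieces, note that the shift $\llbracket -\chi(\Lag), (\abs{\Legp}-\abs{\Legm}-\chi(\Lag))/2\rrbracket$ is additive, handle Legendrian isotopies by the invariance maps, handle pinches by a pentagon-counting map allowing non-empty pentagons with $v$-powers, and compose. For the pinch your sketch is essentially \fullref{prop:x-swap-filt} (the paper works with filtered maps on $\fGCtb$ and descends to the associated graded $\GHtb$, but the content is the same); note only that a pinch realized on a grid may appear either as an $X$ swap or as an $O$ swap, and the $O$-swap case requires a separate triangle-counting map (\fullref{prop:o-swap-filt}), which your write-up omits but which is analogous in spirit.

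The genuine gap is the birth step. You replace it by an appeal to ``the quasi-isomorphism expressing $\GCtb$ of a disjoint union with a small unknot grid in terms of'' the complex of the smaller diagram, together with ``multiplicativity of the K\"unneth-type identification on the $\xpm$.'' No such K\"unneth statement is available for the double-point enhanced theory, and it is not a routine transcription of the non-enhanced one: even when the unknot occupies its own $2\times 2$ block of rows and columns (a $1\times 1$ grid does not exist), the enhanced differential counts \emph{non-empty} rectangles, so rectangles supported in the rows or columns of one block acquire $v$-powers from generator components sitting on the circles of the other block; the complex of the split diagram is therefore not a tensor product in any obvious way, and whether $\GHtb$ of a split union factors as you need is exactly the kind of structural question about the enhanced theory that is open (the paper does not even know whether the filtered enhanced complex is an invariant). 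Even granting such an isomorphism on homology, you would still have to prove that the canonical classes are ``multiplicative'' under it and that a projection killing the unknot factor can be chosen to be grading-correct and to send $\ltpmb(\Gp)$ to $\ltpmb(\Gm)$; this is not automatic, because $\xpm(\Gp)$ contains points adjacent to the new $X$ markings and does \emph{not} lie in the subspace of states naturally identified with $S(\Gm)$ --- which is precisely why the paper's birth map (\fullref{lem:birth}) is not a naive projection but the composition $\birthemap\comp\birthpsimap\comp\birthpimap$, where $\birthpsimap$ counts explicit (possibly non-empty, $v$-weighted) rectangles crossing the birth block, and why its chain-map property and the computation $\birthmapb(\xpm(\Gp))=\xpm(\Gm)$ require a separate domain analysis extending \cite{BLW22} and \cite{JPSWW22}. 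To close the gap you should either prove the enhanced K\"unneth-type statement you are invoking (including the behavior of $\xpm$), or construct the birth map directly by rectangle counts as the paper does.
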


\subsection{Comparison with the GRID invariants}
\label{ssec:intro-compare}

Just like $\lambdahpm$, the double-point enhanced invariants $\lhpmb$ are effective in distinguishing Legendrian knots, distinguishing transverse knots, and obstructing decomposable Lagrangian cobordisms. In fact, we do not know any examples where the two differ. In \cite{T23}, Thakar shows that for a link $L$ with knot Floer thickness at most one, the double-point enhanced grid homology $\GHmb(L)$ is isomorphic to $\GHm(L)[v]$. Thakar's argument does not however imply that the invariants $\lambdahpm$ and $\lhpmb$ agree (i.e,\ that both are zero or both are nonzero). So, it is a priori possible that there is a link for which the two homologies agree, but the respective GRID invariants do not. We have computed $\lambdahpm$ and $\lhpmb$ for almost all Legendrian representatives with  maximal Thurston-Bennequin number for knots of arc index up to $11$.  In all examples we computed, our invariants were nonzero if and only if the (non-enhanced) GRID invariants were nonzero. Further, we show: 
\begin{theorem}
    \label{thm:compare}
    Let $\Leg$ be a Legendrian link, If $\lhpb(\Leg) = 0$, then $\lambdahp(\Leg) =0$. Similarly, if $\lhmb(\Leg) = 0$, then $\lambdahm(\Leg) = 0$.
\end{theorem}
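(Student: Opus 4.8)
The plan is to exhibit ordinary grid homology as the $v=0$ specialization of double-point enhanced grid homology, realized by a chain map that fixes the canonical generators. Recall from \cite[Section~5.5]{OSS15} that $\GCmb(\grid)$ is the free $\Ftwo[U_1,\dots,U_n,v]$-module on the grid states of $\grid$, with
\[
  \partial^{\mathit{big}}\x \;=\; \sum_{\y}\ \sum_{\substack{r\in\Rect(\x,\y)\\ r\cap\XX=\eset}} v^{\,\abs{\Int(r)\cap\x}}\;U_1^{O_1(r)}\cdots U_n^{O_n(r)}\,\y,
\]
and that the summands with $\abs{\Int(r)\cap\x}=0$ are precisely those coming from empty rectangles. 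Hence reduction modulo $v$ carries $\partial^{\mathit{big}}$ to the ordinary minus differential, producing a chain map $\pi\colon\GCmb(\grid)\to\GCmb(\grid)/(v)=\GCm(\grid)$ which is the identity on grid states; in particular $\pi(\xpm(\grid))=\xpm(\grid)$. Since reduction modulo $v$ commutes with the coefficient-ring quotient defining the simply blocked theory, $\pi$ descends to a chain map $\bar\pi\colon\GChb(\grid)\to\GCh(\grid)$ that again fixes the canonical generators $\xpm(\grid)$, now regarded as cycles in $\GChb(\grid)$ and $\GCh(\grid)$ respectively.

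Given this, the deduction is immediate. Suppose $\lhpb(\Leg)=0$. By \fullref{thm:legh} the vanishing is independent of the chosen grid, so for any grid $\grid$ representing $\Leg$ we have $\xp(\grid)=\partial^{\mathit{big}}\eta$ for some $\eta\in\GChb(\grid)$. Applying $\bar\pi$ gives $\xp(\grid)=\partial\paren{\bar\pi\eta}$ in $\GCh(\grid)$, so $\xp(\grid)$ is a boundary and $\lambdahp(\grid)=0$; by invariance of the Legendrian GRID invariants \cite{OST08} this yields $\lambdahp(\Leg)=0$. Replacing $\xp$ by $\xm$ verbatim gives the second implication. Equivalently, one can run the same reduction on the tilde complexes, $\GCtb(\grid)\to\GCt(\grid)$, combining \fullref{prop:tilde-hat} with its classical counterpart to restate vanishing of $\lhpmb$ and $\lambdahpm$ in terms of $\ltpmb$ and $\lambdatpm$.

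The argument is essentially bookkeeping, and the only point requiring genuine care — the expected main obstacle — is verifying that the $v=0$ reduction really is a chain map in each flavor: one must check that the $v^0$-part of $\partial^{\mathit{big}}$ coincides \emph{exactly} with the ordinary differential, with no surviving cross-terms, and that $v$ carries a bidegree for which both $\partial^{\mathit{big}}$ and the reduction map are homogeneous, so that $\pi$ and $\bar\pi$ are bigraded. This is purely a matter of unwinding the definitions and weight conventions of \cite[Section~5.5]{OSS15}; no new input on rectangle counts is needed. Conceptually the theorem just records that the double-point enhanced theory sits over ordinary grid homology via $v\mapsto 0$, so a canonical class that is null-homologous upstairs is already null-homologous downstairs — which is also why the reverse implications in \fullref{thm:compare} are not asserted.
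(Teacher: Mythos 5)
Your proposal is correct and is essentially the paper's argument: both rest on the observation that setting $v=0$ carries the double-point enhanced differential to the ordinary one while fixing the canonical generators, so a boundary upstairs reduces to a boundary downstairs. The only (inessential) difference is that you phrase this as a quotient chain map at the simply blocked level, whereas the paper first passes to the tilde complexes via \fullref{prop:tilde-hat} and compares $\bdtoxb$ with $\bdtox$ modulo $v$ there.
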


\begin{remark}
The results in this paper, along with an argument analogous to  \cite{bvv}, would imply the existence of a double-point enhanced LOSS invariant in the case of Legendrian links in $(S^3, \xistd)$.
\end{remark}

\subsection{Organization}
\label{ssec:organization}

In \fullref{sec:prelims}, we provide some background on Legendrian and transverse knots,  Lagrangian cobordisms, knot Floer homology, and the GRID invariants. In \fullref{sec:inv}, we define the double-point enhanced GRID classes, and study their behavior under grid commutation and (de)stabilization, ultimately proving that they are Legendrian link invariants and behave as expected under Legendrian stabilization. In \fullref{sec:cob}, we study the behavior of the double-point enhanced GRID invariants under pinches and (the reverse of) births, proving that they satisfy a weak functoriality under decomposable Lagrangian cobordisms and thus obstruct such cobordisms. Finally, in \fullref{sec:comp}, we compare the GRID invariants with the double-point enhanced GRID invariants.

\subsection*{Acknowledgments} We thank Ollie Thakar and Robert Lipshitz for helpful conversations. 
This work is the result of the 2024 Summer Hybrid Undergraduate Research (SHUR) program at Dartmouth College, and the authors thank Dartmouth for the support.  IP was partially supported by NSF CAREER Grant DMS-2145090. The SHUR program was also partially supported by this NSF grants.

\section{Preliminaries}\label{sec:prelims}

In this section, we review some basics about Legendrian and transverse knots,  Lagrangian cobordisms, knot Floer homology, and the GRID invariants.

\subsection{Legendrian Knots and Lagrangian cobordisms}
A link $\Leg \subset \R^3$ is called \emph{Legendrian} if it is everywhere tangent to the standard contact structure on $\R^3$,
\[
  \xi_{\std}=\ker(\alpha_{\std}),\quad \alpha_{\std}=dz-y\,dx.
\]
Two Legendrian links are Legendrian isotopic if they are isotopic through a family of Legendrian links. 

A Legendrian link is often represented by its \emph{front diagram}, or \emph{front projection}, which is its projection onto the $xz$-plane. Note that in a front diagram, strands with lower slope always pass over strands with higher slope. See \fullref{fig:4_1} for an example. 
\begin{figure}[ht]
      \includegraphics[scale=1]{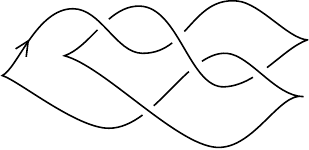}
  \caption{An example of a front projection.}
    \label{fig:4_1}
\end{figure}

Two Legendrian links are Legendrian isotopic if and only if their front diagrams can be related by a sequence of Legendrian planar isotopies (isotopies that preserve left and right cusps) and Legendrian Reidemeister moves (the first three diagrams in \fullref{fig:leg-moves} and their horizontal and vertical reflections). 
\begin{figure}[ht]
      \includegraphics[scale=1]{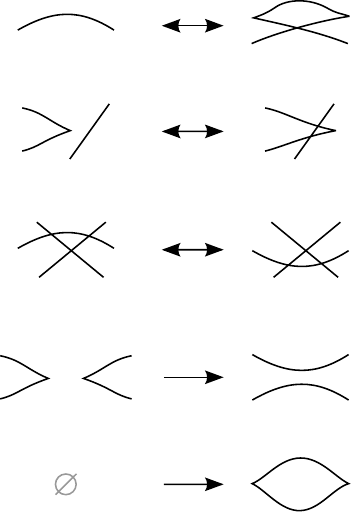}
  \caption{The moves on front projections that correspond to elementary cobordisms; vertical and horizontal reflections of these moves are also allowed. The first three moves are Legendrian Reidemeister moves, the fourth move is called a pinch, and the fifth move is called a birth.}
    \label{fig:leg-moves}
\end{figure}

The two classical Legendrian link invariants are the Thurston--Bennequin number 
$\tb(\Leg)$ and the rotation number $\rot(\Leg)$. These can be computed from an 
oriented front diagram $D$ via the relations
\[
  \tb(\Leg)=\mathrm{wr}(D) - \frac{1}{2} (c_+(D)+c_-(D)), \qquad 
  \rot(\Leg)=\frac{1}{2} (c_-(D)-c_+(D)),
\]
where $\mathrm{wr}(D)$ is the writhe of the diagram (the number of positive crossings minus the number of negative crossings), and $c_-(D)$ and $c_+(D)$ 
are the number of downward and upward cusps, respectively.

A smooth link  $ T \in(\R^3, \xi_{\std})$ is called \emph{transverse} if it is everywhere transverse to the contact structure. Two transverse links are transversely isotopic if they are isotopic through transverse links. A transverse link $T$ naturally inherits an orientation from the (oriented) contact structure: a vector $v$ tangent to $T$ is positive if and only if $\alpha_{\std}(v)>0$.

Given an oriented Legendrian link $\Leg$, we can obtain a transverse link $T_+(\Leg)$, called the \emph{positive transverse pushoff of $\Leg$}, by smoothly pushing $\Leg$ off in a direction transverse to the contact planes so that the orientation of the smooth link is preserved. Further, every transverse link is the positive transverse pushoff of some Legendrian link. 

If the  transverse link  $ T \subset(\R^3, \xi_{\std})$ is  the positive transverse pushoff of a Legendrian link $\Leg$, then \emph{self-linking number $\self(T)$} is defined as
\[\self(T) = \tb(\Leg) - r(\Leg).\]

Transverse links can be studied via front projections similar to those for Legendrian links. As we will instead represent links by grid diagrams in this paper, we do not discuss transverse front projections.

Last, we discuss a certain type of cobordisms between Legendrian links.
The \emph{symplectization} of $(\R^3, \xistd)$ is the symplectic $4$-manifold
\[
  (\R_t \cross \R^3, d (e^t \alphastd)).
\]
Given two Legendrian links $\Legm, \Legp$ in $(\R^3, \xistd)$, 
a \emph{Lagrangian cobordism} from $\Legm$ to $\Legp$ is an oriented, embedded surface $L \subset \R^t \cross 
\R^3$ such that
\begin{itemize}
  \item $L$ is Lagrangian, i.e.\ $d (e^t \alphastd) \rvert_L \equiv 0$;
  \item $L$ has cylindrical ends, i.e.\ for some $T > 0$,
    \begin{align*}
      L \cap ((-\infty, -T) \cross \R^3) &= (-\infty, -T) \cross \Legm,\\
      L \cap ((T, \infty) \cross \R^3) &= (T, \infty) \cross \Legp,
    \end{align*}
    and $L \cap ([-T, T] \cross \R^3)$ is compact.
\end{itemize}
A Lagrangian cobordism is called \emph{exact} if there exists a function $f \colon L \to \R$ that is constant on each of the two cylindrical ends, and satisfies
\[
  (e^t \alphastd) \rvert_L = df.
\]
A \emph{Lagrangian concordance} is a Lagrangian cobordism of genus zero; such a cobordism is automatically exact.

Lagrangian cobordisms, and even concordances \cite{Cha15:NotSym}, are directed. By work of Chantraine \cite{Cha10:Leg}, the existence of a Lagrangian cobordism $L$ from $\Legm$ to $\Legp$ implies that the classical invariants of the two links are related by 
\[tb(\Leg_+)-tb(\Leg_-)=-\chi(L) \qquad \text{and} \qquad r(\Leg_+)-r(\Leg_-)=0,\]
where $\chi(L)$ is the Euler characteristic of $L$. 

An important class of exact Lagrangian cobordisms is the class of \emph{decomposable} Lagrangian cobordisms, that is, cobordisms that are isotopic through exact Lagrangian cobordisms to a composition of elementary exact Lagrangian cobordisms. By work of Bourgeois, Sabloff, Traynor \cite{BST15}, 
Chantraine \cite{Cha10:Leg}, Dimitroglou Rizell \cite{Dim16}, and Ekholm, 
Honda, and K\'alm\'an \cite{EHK16}, there exists an elementary exact Lagrangian cobordism from $\Legm$ to $\Legp$ if and only if  $\Legp$ can be obtained from $\Legm$ by a Legendrian isotopy, a pinch, or a birth; see \fullref{fig:leg-moves}. It is an open question whether every connected, exact Lagrangian cobordism between undestabilizable, nonempty links is decomposable.

\subsection{Knot Floer homology, the GRID invariants, and double points}\label{ssec:background-GH}
 
In this section, we review some of the basics of the combinatorial formulation of knot Floer homology, following the conventions in \cite{OSS15}.

A \emph{grid diagram} $\grid$ is an $n \times n$ grid on the plane, along with two sets of markings $\XX = \{X_1,  \ldots, X_n\}$ and $\OO= \{O_1,  \ldots, O_n\}$ such that each row has one $X$ marking and one $O$ marking, and so does each column, and no square of the grid contains more than one marking. 

A grid diagram $\G$ specifies a link $L(\G)$ in $\R^3$ as follows. Draw horizontal segments from the $O$'s to the $X$'s, vertical segments from the $X$'s to the $O$'s, and require that vertical segments pass over horizontal ones. We say \emph{$\G$ is a grid diagram for $L$}. Conversely, every link $L$ in $\R^3$ can be represented by a grid diagram.
By a theorem of Cromwell \cite{Cro95}, two grid diagrams represent the same link if and only if they are related by a sequence of the following moves: 
\begin{itemize}
\item \emph{cyclic permutations}, in which the topmost row (or bottommost row, rightmost column, leftmost column) is moved to become the bottommost row (or topmost row, leftmost column, rightmost column, resp.);
\item  \emph{commutations}, in which two adjacent rows (or columns) are switched if the corresponding segments connecting the $X$'s and $O$'s are either nested or disjoint;
\item \emph{stabilizations}, in which a $1 \times 1$ square with an $O$ marking (resp.\ $X$ marking)  is replaced by a $2 \times 2$ square with two diagonal $O$ markings and an $X$ marking (resp.\ two diagonal $X$ markings and an $O$ marking), creating a new row and a new column, and \emph{destabilizations}, the inverse operations to stabilizations. 
\end{itemize}
Following \cite{OSS15}, we classify (de)stabilizations by the type of the marker and the location of the empty cell in the $2 \times 2$ square; for example, a stabilization of type \textit{X:SE} results in a $2 \times 2$ square with an empty southeastern cell, an $X$ in the northwestern cell, and $O$'s in the other two cells.

A grid diagram $\G$ also specifies a Legendrian link $\Leg (\G)$ in $(\R^3, \xi_{\std})$, as follows. Start with the
projection of $L(\G)$ onto the grid, smooth 
all northwest and southeast corners of the projection, turn the 
northeast and southwest corners into cusps, rotate the diagram $45$ degrees clockwise, and flip all the crossings.
Note that the smooth type of the Legendrian link $\Leg (\G)$ is the mirror of the smooth link $L(\G)$. Similar to the smooth case, every Legendrian link in $(\R^3, \xistd)$ can be represented by a grid diagram. Two grid diagrams represent the same Legendrian link if and only if they are related by a sequence of cyclic permutations, commutations, and (de)stabilizations of type \textit{X:SE} and \textit{X:NW}; (de)stabilizations of type \textit{O:SE} and \textit{O:NW} result in Legendrian isotopies as well. 
The other types of grid stabilization do change the Legendrian isotopy class of the link: 
stabilizations of type \textit{X:NE} and \textit{O:SW} are positive stabilizations of the Legendrian link, whereas stabilizations of type \textit{X:SW} and \textit{O:NE} are negative stabilizations. 

A grid diagram $\G$ also specifies a transverse link $T(\G)$ in $(\R^3, \xi_{\std})$, by taking the positive pushoff of $\Leg(\G)$. Two grid diagrams represent the same transverse link if and only if they are related by a sequence of cyclic permutations, commutations, and (de)stabilizations of type \textit{X:SE}, \textit{X:NW}, and \textit{X:SW}. (De)stabilizations of type \textit{O:SE},  \textit{O:NW}, and \textit{O:SW} result in transverse isotopies as well. 
Thus, one can think of transverse links up to transverse isotopy as Legendrian links up to Legendrian isotopy and negative stabilization. Grid stabilizations of type \textit{X:NE} and \textit{O:SW} result in stabilization of the transverse link $T(\G)$.

\begin{remark}
Our convention for converting from grid diagrams to Legendrian and transverse knots differs from \cite{CN13atlas} and agrees with \cite{OSS15}, \cite{OST08}, and \cite{NOT08}.
\end{remark}

 To a grid diagram $\grid$ of size $n$ for an $l$-component link $L$, we associate a bigraded chain complex $(\GCm(\grid), \bdm)$ over the ring $\mathbb F \left[V_1, \ldots, V_n\right]$, called the \emph{unblocked grid complex}. The \emph{unblocked grid homology} $\GHm(\grid)$ is the homology of this complex, viewed as a module over $\mathbb F [U_1, \ldots, U_l]$, where the action of $U_i$ is given by multiplication by $V_{k_i}$ for some fixed $k_i$ such that  $V_{k_i}$ corresponds to the $i^{\mathrm th}$ link component. The \emph{simply blocked grid homology} $\GHh(\grid)$ is the homology of the complex $\GCm(\grid)/(V_{k_1}, \ldots, V_{k_l})$, and the \emph{fully blocked gird homology}  $\GHt(\grid)$ is the homology of the complex $\GCm(\grid)/(V_{1}, \ldots, V_{n})$.
 
 Before we define these complexes, we introduce a bit more notation. 

Let $\grid$ be a grid diagram of size $n$. We will think of $\grid$ as a diagram on the torus, by identifying the top and bottom edges, as well as the left and right edges of the grid. The horizontal segments of the grid (which separate the rows of squares) become a set of circles  $\alphas = \{\alpha_1, \ldots ,\alpha_n\}$, indexed from bottom to top,  and the vertical ones become a set of circles $\betas = \{\beta_1, \ldots, \beta_n\}$, 
indexed from left to right.

The module $\GCm(\grid)$ is generated over $\mathbb F \left[V_1, \ldots, V_n\right]$ by \emph{grid states}, that is, bijections between horizontal and vertical circles. Geometrically, a grid state is an $n$-tuple of points $\x = \{x_1, \ldots, x_n\}$ on the torus with one point on each horizontal circle and one on each vertical circle. The set of grid states for a grid diagram $\grid$ is denoted $\SG$.

The bigrading on $\GCm(\grid)$ is induced by two integer-valued functions on the set of grid states, which we define below. Consider the partial ordering of points in $\mathbb{R}^2$ given by $(x_1,y_1)<(x_2,y_2)$ if $x_1<x_2$ and $y_1<y_2$. For any two sets $P,Q \subset \mathbb{R}^2$, define \[\mathcal{I}(P,Q) = \#\setc{(p,q) \in P\times Q}{p<q},\] and symmetrize this function, defining
\[\J(P,Q)=\frac{\I(P,Q)+\I(Q,P)}{2}.\] 
Consider a fundamental domain  $[0,n) \times [0,n) \subset \mathbb{R}^2$ for the torus. We can think of 
a grid state $\x \in S(\G)$ as a set of points with integer coordinates, and $\X$ and 
$\OO$ as sets of points with half-integer coordinates in the fundamental domain. Given a grid state $\x$, define
\begin{align*}
   M_{\OO}(\x) & =\J(\x,\x)-2\J(\x,\OO)+\J(\OO,\OO)+1, \\
    M_{\X}(\x) & =\J(\x,\x)-2\J(\x,\X)+\J(\X,\X)+1.
\end{align*}
Finally, the Maslov and Alexander functions $M (\x)$ and $A(\x)$ are defined as
\begin{align*}
    M (\x) &= M_{\OO}(\x) \\
    A(\x)&=\frac{1}{2}\Big(M_\OO(\x)-M_\X(\x)\Big)-\frac{n-l}{2}.
\end{align*}
Extend the Maslov and Alexander functions to a bigrading on $\GCm(\G)$ by 
\begin{align}
  M (V_1^{k_1}\cdots V_n^{k_n}\x) &= M(\x) - 2k_1-\cdots - 2k_n\\
  A (V_1^{k_1}\cdots V_n^{k_n}\x) &=  A(\x) - k_1-\cdots - k_n. 
\end{align}

Given two grid states $\x,\y \in \SG$, let $\Rect(\x, \y)$ denote the set of rectangles embedded in the torus with the following properties. First, $\Rect (\x, \y)$ is empty if $\x$ and $\y$ do not agree at exactly $n - 2$ points. An element $r \in \Rect (\x, \y)$ is an embedded rectangle with right angles, such that:
\begin{itemize}
\item $\bdy r$ lies on the union of horizontal and vertical circles;
\item The vertices of $r$ are exactly the points in $\x \triangle \y$, where $\triangle$ denotes the symmetric difference; and
\item $\bdy (\bdy r \intersect \betas)= \x-\y$, in the orientation induced by $r$.
\end{itemize}
Given $r\in\Rect(\x, \y)$, we say that \emph{$r$ goes from $\x$ to $\y$.} Observe that $\Rect(\x, \y)$  consists of either zero or two rectangles. We say a rectangle $r\in \Rect(\x, \y)$ is \emph{empty} if $\x\cap \Int(r) = \y\cap \Int(r) = \emptyset$. We denote the set of empty rectangles from $\x$ to $\y$ by $\eRect(\x, \y)$.

Define the \emph{multiplicity} $O_i(r)$ of $r$ at the marking $O_i$ as one if $r$ contains $O_i$ and zero otherwise. 
The differential on $\GCm(\G)$ is defined on generators by 
\[\bdm(\x) = \sum_{\y\in \SG}\sum_{\substack{r\in \eRect(\x, \y)\\ r\cap \XX = \emptyset}} V_{1}^{O_{1}(r)}V_{2}^{O_{2}(r)}\cdots V_{n}^{O_{n}(r)}\cdot \y.\]

One could also consider the simpler, \emph{fully blocked grid complex} $\GCt(\grid)$ generated over $\mathbb F$ by $\SG$ with differential 
\[\bdtox(\x) = \sum_{\y\in \SG}\sum_{\substack{r\in \eRect(\x, \y)\\ r\cap \XX = \emptyset = r\cap \OO}} \y.\]
The \emph{fully blocked grid homology} $\GHt(\grid)$ is the homology of $(\GCt(\grid), \bdtox)$.

Note that for any two states $\x,\y$ with a rectangle $r\in \Rect(\x,\y)$, we have
\begin{align}
  M (\x) - M (\y) &= 1 - 2 \#(r \cap \OO)+ 2 \# (\Int(r) \cap \x), \label{eq:maslov-gr-rel}\\
  A (\x) - A (\y) &= \# (r \cap \XX) - \# (r \cap \OO), \label{eq:alex-gr-rel}
\end{align}
so $\bdm(\x)$ and $\bdtox$ are homogeneous of $(M,A)$ bidegree $(-1,0)$.

Last,  the \emph{filtered grid complex} $\fGCt(\grid)$ is the complex with underlying module $\GCt(\grid)$, differential 
\[\bdt(\x) = \sum_{\y\in \SG}\sum_{\substack{r\in \eRect(\x, \y)\\ r\cap \OO = \emptyset}} \y,\]
and filtration induced by the Alexander grading. Note that the homology of the associated graded object is $H_*(\gr(\fGCt(\G))) = \grGH(\G)$.

 A different, less studied version of knot Floer homology was introduced by Lipshitz \cite{L06}, and is known as \emph{double-point enhanced knot Floer homology}. In its combinatorial version, one counts rectangles that are not necessarily empty, and uses a new formal variable to track non-emptiness. We recall the combinatorial formulation below; see also \cite[Section~5.5]{OSS15}.

Once again, let $\grid$ be a grid diagram of size $n$ for an $l$-component link $L$.
 
 The simplest double-point enhanced grid complex is the fully blocked one, in which one counts (not necessarily empty) rectangles with no $X$ or $O$ markings in their interior: 
\begin{definition} 
The \emph{fully blocked double-point enhanced grid complex} $\GCtb(\grid)$ is the free module over $\mathbb{F}[v]$ generated by $\SG$ and equipped with the differential $\bdtoxb$, where
\[ \bdtoxb(\x)=\sum_{\y\in\SG}\ \sum_{\substack{r\in\Rect(\x,\y)\\r\cap \XX=\emptyset=r\cap\OO}} v^{\#(\Int(r)\cap\x)}\y. \]
\end{definition}
More generally, one has the unblocked version, where only $X$ markings are forbidden: 
\begin{definition} 
The \emph{unblocked double-point enhanced grid complex} $\GCmb(\grid)$ is the free module over $\mathbb{F}[V_1,V_2,\dots,V_n,v]$ generated by $\SG$ and equipped with the differential
\[ \bdmb(\x)=\sum_{\y\in\SG}\ \sum_{\substack{r\in\Rect(\x,\y)\\r\cap \XX=\emptyset}} V_{1}^{O_{1}(r)}V_{2}^{O_{2}(r)}\cdots V_{n}^{O_{n}(r)}v^{\#(\Int(r)\cap\x)}\y. \]
\end{definition}
The \emph{unblocked double-point enhanced grid homology $\GHmb(\grid)$}  is the homology of the complex $(\GCmb(\grid), \bdmb)$, viewed as a module over $\mathbb F [U_1, \ldots, U_l]$, where the action of $U_i$ is given by multiplication by $V_{k_i}$ for some fixed $k_i$ such that  $V_{k_i}$ corresponds to the $i^{\mathrm th}$ link component. 
 The \emph{simply blocked  double-point enhanced grid homology} $\GHhb(\grid)$ is the homology of the complex $\GCmb(\grid)/(V_{k_1}, \ldots, V_{k_l})$. Last, the \emph{fully blocked double-point enhanced grid homology}  $\GHtb(\grid)$ is the homology of the complex $\GCmb(\grid)/(V_{1}, \ldots, V_{n})\cong \GCtb(\grid)$.

The Maslov and Alexander functions induce a bigrading on the double-point enhanced grid complexes, so that multiplication by $v$ increases the Maslov grading by two and preserves the Alexander grading, i.e. 
\begin{align}
  M (V_1^{k_1}\cdots V_n^{k_n}v^k\x) &= M(\x) - 2k_1-\cdots - 2k_n+ 2k\\
  A (V_1^{k_1}\cdots V_n^{k_n}v^k\x) &=  A(\x) - k_1-\cdots - k_n. 
\end{align}

The homologies $\GHm(\grid)$ and $\GHh(\grid)$, as well as their double-point enhanced counterparts $\GHmb(\grid)$ and $\GHhb(\grid)$, are invariants of the underlying link $L$. The homologies $\GHt(\grid)$ and $\GHtb(\grid)$ are almost invariants of the link $L$; specifically, there are isomorphisms

\begin{equation*}
    \GHt(\G) \cong \GHh(\grid)\otimes W^{\otimes (n-l)} \qquad \text{and} \qquad \GHtb(\G) \cong \GHhb(\grid)\otimes W^{\otimes (n-l)} ,
\end{equation*}
where $W$ is the two-dimensional bigraded vector space with one generator in bigrading $(0,0)$ and another in bigrading $(-1,-1)$. 

We also introduce  the \emph{filtered double-point enhanced grid complex} $\fGCtb(\grid)$ as the complex with underlying module $\GCtb(\grid)$, differential 
\[\bdtb(\x) = \sum_{\y\in \SG}\sum_{\substack{r\in \Rect(\x, \y)\\ r\cap \OO = \emptyset}} \y,\]
and filtration induced by the Alexander grading. While it is easy to see that the homology of the associated graded object is $H_*(\gr(\fGCtb(\G))) = \GHtb(\G)$, and is hence a link invariant, we do not at present know whether the filtered chain homotopy type of $\fGCtb$ is a link invariant.

Given a grid diagram $\grid$, the \emph{canonical grid states} $\xp(\grid)$ and $\xm(\grid)$ are composed of the points directly to the northeast of the $X$ markings  and the points directly to the southwest of the $X$ markings, respectively. Their homology classes in $\GHm(\grid)$ are invariants of the Legendrian link $\Leg$ represented by $\grid$, denoted $\lp(\Leg)$ and $\lm(\Leg)$, respectively, and obstruct decomposable Lagrangian cobordisms. The homology class of $\xp(\grid)$ is also an invariant of the transverse link $T$ represented by $\grid$ and is denoted $\theta(T)$. In the next two sections, we prove that the canonical grid states yield invariants and obstructions when considered as elements in double-point enhanced grid homology as well.

\section{The double-point enhanced GRID invariants}\label{sec:inv}

\subsection{Definition of the double-point enhanced GRID invariants}
We now define our invariants in the double-point enhanced hat, minus, and tilde theories: 
\begin{definition}
Suppose $\grid$ is a grid diagram. 
We define $\lhpb(\grid)$, $\ltpb(\grid)$, and $\lpb(\grid)$ to be the homology classes of $\xp(\grid)$ in $\GHhb(\grid)$, $\GHtb(\grid)$, and $\GHmb(\grid)$, respectively. Define $\lhmb(\grid)$, $\ltmb(\grid)$, and $\lmb(\grid)$ analogously, replacing $\xp(\G)$ with $\xm(\G)$.
\end{definition}

Now, consider the filtered double-point enhanced complex $\fGCtb(\grid)$, and let $(E^r_p)$ be the associated spectral sequence, where $r$ is the index of the page and $p$ is the grading induced by the Alexander filtration, as in \cite[Section 2.3]{JPSWW22}. We define filtered double-point enhanced GRID (conjectured) invariants, analogous to those in \cite{JPSWW22}:

\begin{definition}
Suppose $\grid$ is a grid diagram, and let $A = A(\xp(\grid))$. 

We define $\npb(\G)$ to be the smallest integer $i$ for which $d^i_A [\xp(\G)]^i \neq 0 \in E^i_A$, or $\infty$ if $d^i_A [\xp(\G)]^i = 0$ for all $i \in \Z_{\geq 1}$. We define $\nmb(\G)$ analogously, replacing $\xp(\G)$ with $\xm(\G)$.

For each $1 \leq i \leq \np(\G)$, we define
\[
  {\lpb}_i(\G) = [\xp(\G)]^i \in E^i_{A(\xp(\G))}, \qquad {\lmb}_i(\G) = [\xm(\G)]^i \in E^i_{A(\xm(\G))}.
\]
\end{definition}

Our proofs of invariance in this section, as well as the proofs of obstructions to exact Lagrangian cobordisms in the next section, heavily rely on the use of shapes with various geometric properties on a grid in order to define relevant maps, just like we used rectangles to define the differentials in \fullref{ssec:background-GH}. We will introduce each of these shapes at the time that they become necessary. They are all instances of what is called a ``domain'' from one grid state to another. 
 Given $\x, \y \in \SG$, a \emph{domain $\psi$ from $\x$ 
      to $\y$} is a formal linear combination of the closures of the squares in 
    $\grid$, such that $\bdy (\bdy \psi \cap \betas) = \x - \y$ in the induced 
    orientation on $\bdy \psi$. The set of all domains from $\x$ to $\y$ is 
    denoted by $\pi (\x, \y)$. See, for example, \cite[Definition~4.6.4]{OSS15}.

\subsection{Invariance under commutations}
\label{sec:comm}
In this section, we prove invariance under commutation in the unblocked  $\GHmb$ theory as well as in the filtered $\fGCtb$ theory.

\begin{lemma}\label{lem:comm}
Suppose $\grid'$ is obtained from $\grid$ by a commutation move. Then there exists a filtered chain homomorphism 
\[ \commmapb:\fGCtb(\grid)\to \fGCtb(\grid') \]
such that $\commmapb(\x^{\pm}(\grid))=\x^{\pm}(\grid')+\y'$, where $\y'\in\fGCtb(\grid')$ is in a lower filtration level than $\x^\pm(\grid')$. 

Similarly, there exists a bigraded quasi-isomorphism
\[ \commmapxb:\GCmb(\grid)\to\GCmb(\grid') \]
such that $\commmapxb(\x^{\pm}(\grid))=\x^{\pm}(\grid')$.
\end{lemma}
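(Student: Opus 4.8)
The approach is to mimic the pentagon-counting argument used by Ozsváth--Szabó--Thurston \cite{OST08} (see also \cite[Chapter 12]{OSS15}) for the ordinary GRID invariants, upgrading it to track the double-point variable $v$. Recall that a commutation move is realized on the torus by choosing a pair of ``interpolating'' curves: one keeps all of the $\betas$ (or $\alphas$) circles except the one being moved, say $\beta_i$, which is replaced by a new curve $\gamma_i$ that intersects $\beta_i$ transversely in two points and is isotopic to $\beta_i'$. The commutation map $\commmapb$ (resp. $\commmapxb$) is then defined by counting pentagons: given $\x\in S(\grid)$ and $\y'\in S(\grid')$, one counts embedded pentagons $P$ with one vertex at the distinguished intersection point of $\gamma_i\cap\beta_i$, appropriately empty of $\XX$ markings, weighted by $\prod_j V_j^{O_j(P)}$ and, in the enhanced theory, by $v^{\#(\Int(P)\cap\x)}$. (For the filtered statement one instead forbids $\OO$ markings in $P$ and keeps the $v$-weight, with the $\XX$-count controlling the Alexander filtration drop.)

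\textbf{Key steps.}
First I would recall the precise definition of the set $\Pentx$ (and $\Pento$) of pentagons, and define $\commmapxb$ and $\commmapb$ by the weighted counts above; these are $\mathbb{F}[V_1,\dots,V_n,v]$-linear (resp. $\mathbb{F}[v]$-linear and filtered) by construction, and the Maslov/Alexander bookkeeping showing they are bigraded (resp. filtered) is identical to the unenhanced case once one notes that $v$ carries Maslov degree $2$ and Alexander degree $0$, matching the $\#(\Int(P)\cap\x)$ correction to the Maslov index of a pentagon. Second, I would prove that $\commmapxb$ is a chain map: this amounts to the standard analysis of the boundary of the one-dimensional moduli space of juxtaposed rectangle-pentagon and pentagon-rectangle pairs, but now one must check that the two $v$-exponents match on each cancelling pair. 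The crucial algebraic point, exactly as in the differential-squared computation for $\GCmb$ itself, is that for a composite domain decomposed in two ways as $r_1*P_1 = P_2*r_2$, the total number of interior intersection points with the appropriate intermediate generator, counted with the convention $\#(\Int(\cdot)\cap\x)$, agrees for the two decompositions up to the same reindexing that makes $\bdmb^2=0$; I would cite/adapt the additivity of the local multiplicity count used there. Third, I would verify the effect on the canonical generators. Because $\xp(\grid)$ and $\xp(\grid')$ differ only near $\beta_i$ versus $\beta_i'$, there is a \emph{unique} ``small'' pentagon from $\xp(\grid)$ to $\xp(\grid')$; it is empty, contains no $\XX$ or $\OO$ markings, and contains no point of $\xp(\grid)$ in its interior, so it contributes $v^0=1$ and lands on $\xp(\grid')$ with coefficient $1$. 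Any other pentagon out of $\xp(\grid)$ either hits an $\XX$ marking (killed in the $\GCmb$ version, or landing in lower Alexander filtration in the $\fGCtb$ version) or lands on a generator that is lower in the Alexander filtration than $\xp(\grid')$; this gives the stated formula $\commmapb(\xpm(\grid)) = \xpm(\grid') + \y'$ with $\y'$ of strictly lower filtration, and $\commmapxb(\xpm(\grid)) = \xpm(\grid')$ exactly in the two-variable-blocked theory (the same geometric argument handles $\xm$ via pentagons on the southwest side). Finally, for the quasi-isomorphism claim I would exhibit the inverse commutation map $\commmapxb'\colon \GCmb(\grid')\to\GCmb(\grid)$ built symmetrically, and show the composite is chain homotopic to the identity by the usual hexagon count (now $v$-weighted), or alternatively invoke the filtered statement plus the fact that $\commmapxb$ induces an isomorphism on the associated graded (which reduces to the unenhanced commutation invariance of $\GHm$ tensored with $\mathbb{F}[v]$) to conclude it is a quasi-isomorphism.

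\textbf{Main obstacle.}
The routine-looking but genuinely delicate step is the chain-map verification: one must confirm that the $v$-exponent $\#(\Int(r)\cap\x)$ behaves additively under the relevant domain decompositions so that the two terms in each cancelling pair of rectangle--pentagon juxtapositions carry \emph{identical} monomials in $V_1,\dots,V_n,v$, not merely cancel mod $2$ after ignoring $v$. This is precisely the subtlety that already arises in checking $\bdmb^2=0$, so the work is to show that the pentagon moduli spaces obey the same local intersection-count identities; I expect this to be the technical heart of the lemma, while the grading statements and the computation on canonical generators follow the classical template with only cosmetic changes.
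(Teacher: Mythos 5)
Your overall template (pentagon counts weighted by $v^{\#(\Int(p)\cap\x)}$, grading bookkeeping as in the unenhanced case, uniqueness of the small pentagon out of $\xpm$, hexagon homotopy for the quasi-isomorphism) matches the paper's strategy, but your definition of the commutation maps has a genuine gap: you count only \emph{embedded} pentagons, and with that definition the chain-map identity fails in the double-point enhanced setting. The new phenomenon, absent from the unenhanced argument you are adapting, is a composite domain from $\x$ to $\z'$ with $|\x\setminus\z'|=3$ that wraps around the torus: one of its two decompositions into a pentagon and a rectangle passes through an \emph{immersed} height-one pentagon with a multiplicity-two region (a ``long pentagon''), while the other decomposition uses an embedded pentagon whose corner lies in the interior of the accompanying rectangle, so that rectangle picks up an extra factor of $v$. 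If your map counts only embedded pentagons, the second decomposition's contribution to $\commmapb\circ\bdtb+\bdtb\circ\commmapb$ (and likewise for $\commmapxb$ with $\bdmb$) has no cancelling partner; the analogy with $\bdmb^2=0$ does not save you, because the differential never needs immersed rectangles whereas the pentagon geometry on the combined diagram forces these wrap-around domains. The paper's definition (following Thakar) therefore adds a second sum over long pentagons ($\Pentol$, resp.\ $\Pentxl$), each weighted by an explicit factor of $v$, and the cancellation in the problematic case is exactly: long pentagon contributes $v$ versus embedded pentagon plus rectangle-with-extra-interior-point contributes $v$ (see the paper's \fullref{fig:longpent}).

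Two smaller omissions: in the filtered map $\commmapb$ the same long-pentagon correction is needed (with $O$-markings forbidden), not just in $\commmapxb$; and your case analysis skips the $|\x\setminus\z'|=1$ domains (a thin unmarked annulus glued to a triangle cut from one of the two bigons between $\alpha$ and $\alpha'$), which do not cancel within a single domain but cancel in pairs against the domain built from the \emph{opposite} unmarked annulus. Neither of these is deep, but both are needed to close the chain-map verification you correctly identified as the heart of the lemma.
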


The proof of \fullref{lem:comm} is similar to that of \cite[Lemma 3.3]{JPSWW22}. Before we can begin, we need to introduce a ``combined'' diagram for $\grid$ and $\grid'$ and define certain domains on it. The cases of column and row commutations are analogous, so we will suppose that the commutation is a row commutation, and consider the combined diagram for $\grid$ and $\grid'$ on \fullref{fig:comm}.
\begin{figure}[ht]
         \labellist
    \pinlabel $O$ at 76 37
    \pinlabel $X$ at 20 37
    \pinlabel $O$ at 169 37
    \pinlabel $X$ at 130 37
   \pinlabel {\small{$a$}} at 113 48
    \pinlabel \textcolor{Maroon}{$\alpha$} at -10 46
    \pinlabel \textcolor{Orange}{$\alpha'$} at -10 28
    \endlabellist
  \includegraphics[scale=1]{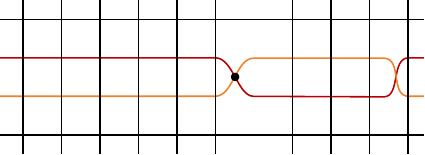}
  \caption{The combined diagram of a row commutation involving $\alpha$ and $\alpha'$.}
    \label{fig:comm}
\end{figure}
We will define the maps $\commmapb$ and $\commmapxb$ by counting certain pentagons on this combined diagram. 

\begin{definition}[cf.\ \cite{T23} Definition 3.6]\label{def:pent}
Let $\x \in S(\grid)$ and $\y' \in S(\grid')$.

A \emph{pentagon $p$ from $\x$ to $\y'$} is an embedded pentagon with non-reflex angles in the combined diagram whose boundary lies on the curves coming from the horizontal and vertical circles of $\grid$ and $\grid'$ and whose vertices are the points $(\x \triangle \y) \cup \{a\}$, where $\triangle$ denotes the symmetric difference, such that $\bdy (\bdy p \cap \betas) = \x - \y'$ in the induced orientation. Let $\Pent(\x,\y')$ be the space of pentagons from $\x$ to $\y'$ and note that $\Pent(\x,\y)$ is empty if $\x$ and $\y$ do not agree at exactly $n-2$ points. 

A \emph{long pentagon $p$ from $\x$ to $\y'$} is an embedded pentagon in the universal cover of the combined toroidal diagram satisfying the above properties (modified appropriately, namely,  we require that the vertices project to $(\x \triangle \y) \cup \{a\}$ and that the signed set $\bdy (\bdy p \cap \betas)$ projects to $\x - \y'$), along with the requirement that the pentagon has height one and that its projection, which we also denote $p$, onto the torus has at least one point of multiplicity $2$ and at least one point of multiplicity $1$. Let $\Pentl$ denote the space of long pentagons from $\x$ to $\y'$.

Let $\Pentx$ and $\Pentxl$ be the spaces of pentagons and long pentagons whose interiors contain no $X$ markings. Let $\Pento$ and $\Pentol$ be defined similarly for $O$ markings.
\end{definition}

\begin{proof}[Proof of \fullref{lem:comm}]
As mentioned earlier, we suppose the commutation is row commutation, and consider the combined diagram on \fullref{fig:comm}. We define the maps $\commmapb$ and $\commmapxb$ by counting pentagons and long pentagons that contain no $O$ markings and no $X$ markings, respectively, in their interiors: 
\begin{align*} 
\commmapb(\x) &=\sum_{\y'\in\boldsymbol{S}(\mathbb{G'})}\sum_{p\in\Pento}  v^{\#(\Int(p)\cap\x)} \cdot \y' + \sum_{\y'\in\boldsymbol{S}(\mathbb{G'})}\sum_{p\in\Pentol}  v \cdot \y'\\
 \commmapxb(\x)&=\sum_{\y'\in\boldsymbol{S}(\grid')}\sum_{p\in\Pentx}  V_{1}^{O_{1}(p)}V_{2}^{O_{2}(p)}\cdots V_{n}^{O_{n}(p)}v^{\#(\Int(p)\cap\x)}\cdot \y'\\
 &\quad +\sum_{\y'\in\boldsymbol{S}(\grid')}\sum_{p\in\Pentxl}  V_{1}^{O_{1}(p)}V_{2}^{O_{2}(p)}\cdots V_{n}^{O_{n}(p)}v\cdot \y'.
\end{align*}

By doing a case analysis similar to that in \cite[Lemma 4.1]{JPSWW22}, we can see that $\commmapxb$ is a bigraded map (also stated in \cite[Proposition 5.4]{T23}), and that $\commmapb$ preserves the Maslov grading and respects the Alexander filtration.  For example, one can check that, given $\x \in \SG$ and $\y' \in S(\grid')$,  if $p\in \Pento\cup \Pentol$, then $M(\y') = M(\x)-2$, and $A(\x) - A(\y') = \#(p\cap\XX)$.

To show that $\commmapb$ and $\commmapxb$ are chain maps, we show that juxtapositions of a pentagon and a rectangle cancel out in pairs. The case of $\commmapxb$  is covered in \cite{T23}, as a result of a number of smaller lemmas and propositions. We include a complete argument here that covers both $\commmapb$ and $\commmapxb$, in order to keep the proof self-contained for the reader and to resolve some small inaccuracies from \cite{T23}. 

Let $\x \in \SG$ and $\z' \in S(\grid')$. Let $\psi\in \pi(\x,\z')$ be a domain that can be decomposed as the juxtaposition of a rectangle and a (possibly long) pentagon, or a (possibly long) pentagon and a rectangle. 
For the case of  $\commmapb$  assume that the interior of $\psi$ contains no $O$ markings, and for the case of $\commmapxb$ assume it contains no $X$ markings. It is clear that the initial and final grid states must differ at $1$, $3$, or $4$ points on the combined diagram. We consider these three cases below. 

\begin{enumerate}
\item Suppose $|\x\setminus \z'| = 4$.  

In this case, the domain can be decomposed in exactly two ways: as $p_1 \ast r_1$ and $r_2 \ast p_2$ where $r_1$ and $r_2$ are rectangles (in $\grid$ and $\grid'$, respectively) with the same support, and $p_1$ and $p_2$ are pentagons with the same support. For the case of $\commmapxb$, note that $O_i(r_1)= O_i(r_2)$ and $O_i(p_1)= O_i(p_2)$ for each $i$, so the two juxtapositions contribute the same power of $V_i$ to the coefficient of $\z'$ in $\commmapxb \circ \bdmb(\x) + \bdmb \circ \commmapxb(\x)$. 

First, suppose the pentagons are long. Then they each contribute a power of $v$. Further, since the pentagons are thin, each of their vertical edges is either fully in the interior of the support of the rectangles, or does not overlap the rectangles at all. So the order of the juxtaposition does not affect the number of points of the starting grid state in the interior of each rectangle. Thus, the two rectangles contribute the same power of $v$ to the differential as well.
 
 Now, suppose the pentagons are not long. A case analysis of the number of corners of each shape in the interior of the other shows that the two juxtapositions contribute the same total power of $v$ to the coefficient of $\z'$; the analysis is analogous to that in \cite[Lemma 4.2]{T23}, where the two shapes are rectangles.
 
Thus, the coefficient of $\z'$ in $\commmapb \circ \bdtb(\x) + \bdtb \circ \commmapb(\x)$, and similarly in $\commmapxb \circ \bdmb(\x) + \bdmb \circ \commmapxb(\x)$, is zero.

\item Suppose $|\x\setminus\z'|=3$.

In this case, the two shapes share exactly one corner. The domain has exactly one other decomposition as a juxtaposition of a rectangle and a pentagon (not necessarily in the same order).  The powers of $V_i$ in the coefficient of $\z'$ in $\commmapxb \circ \bdmb(\x) + \bdmb \circ \commmapxb(\x)$ coming from the two decompositions are the same, since the two decompositions have the same domain, and multiplicities of $O_i$ are additive under juxtaposition. It remains to analyze the contributions of the two decompositions to the power of $v$ in the coefficient of $\z'$.

If $\psi$ can be embedded into a fundamental domain for the torus, then neither rectangle contains a pentagon corner in its interior, and vice versa. Thus, the two decompositions contribute the same to the power of $v$.

Otherwise, at least one decomposition has a long pentagon. \footnote{We caution the reader about a small inaccuracy in the statement of \cite[Corollary 4.1]{T23}, where it is stated that the presence of a long pentagon implies the other pentagon is also long.} If both pentagons are long, then neither rectangle contains a pentagon corner in its interior, each pentagon contributes one to the power of $v$, and the two rectangles contribute equally. 
If both decompositions have long pentagons, neither rectangle contains a pentagon corner in its interior. Last, supposed only one decomposition has a long pentagon. Let $r$ and $p$ be the rectangle and long pentagon in the one decomposition, and $r'$ and $p'$ be the rectangle and pentagon in the other decomposition. Then $p$ contributes one to the power of $v$ and $p'$ does not contribute. Meanwhile, $r'$ contains a corner of $p'$ in its interior, and contributes one more than $r$ to the power of $v$. Thus, the two decompositions contribute the same to the power of $v$. See \fullref{fig:longpent}.

\item Suppose $|\x\setminus\z'|=1$. 

Observe that the complement of the $\alphas$-circles on the combined diagram consists of $n-2$ annuli containing one $X$ and one $O$ each, $2$ annuli with no markings in their interior (each having an arc of $\alpha$ and an arc of $\alpha'$ on their boundary), and two bigons (each having arc of $\alpha$ and an arc of $\alpha'$ as its boundary). 

The domains in this third case are precisely those consisting of a thin annulus with no markings and a triangle (a piece of a bigon cut off by a $\betas$-circle). In this case, there is exactly one way to decompose $\psi$ as a juxtaposition of a pentagon and rectangle. However, these is a unique other domain $\psi\in \pi(\x,\z')$ of this type that decomposes as a juxtaposition of a pentagon and rectangle; namely, the domain whose support consists of the same triangle as $\psi$ together with the opposite annulus with no markings. Thus, domains of this third type cancel each other in pairs. In fact, these are exactly the domains seen in \cite[Lemma 5.4.1 (P-3)(v)]{OSS15}, with appropriate allowed of markings in the bigons depending on the chain complex.

Note that these domains do not contribute a power of $v$, and their contribution to the power of $V_i$ is defined by the possible $O$ marking in the triangle, hence canceling pairs contribute the same to the power of $V_i$.

\end{enumerate}
\begin{figure}
    \includegraphics{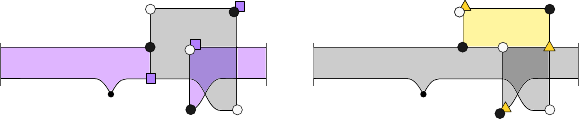}
    \caption{An example of a domain with two decompositions into a rectangle and a pentagon, where exactly one of the pentagons is long.}
    \label{fig:longpent}
\end{figure}

That $\commmapxb$ is a quasi-isomorphism is shown in  \cite[Proposition 5.15]{T23}; the argument relies on proving a homotopy formula by defining a map that counts hexagons.

Finally, $\commmapb(\x^{\pm}(\grid))=\x^{\pm}(\grid')+\y'$ and $\commmapxb(\x^{\pm}(\grid))=\x^{\pm}(\grid')$ since there exists exactly one $X$-free pentagon from $\x^{\pm}(\grid)$, and it is entirely contained within a thin annulus thus is empty and has no $O$ markings.
\end{proof}

\subsection{Invariance under stabilization and destabilization}

In this section, we prove invariance under stabilization and destabilization (see \fullref{fig:stab}) in the unblocked  $\GHmb$ theory. Unfortunately, we do not have a proof in the filtered $\fGCtb$ theory at present; recall, in fact, that it is not even known if the filtered chain homotopy type of $\fGCtb$ is a link invariant.

\begin{figure}[ht]
        \labellist
            \pinlabel $X$ at 12 23
            \pinlabel $O_1$ at 102 33
            \pinlabel $X_2$ at 102 12
            \pinlabel $X_1$ at 123 33
            \pinlabel {\small{$c$}} at 117 17
            \pinlabel $O$ at 188 12
            \pinlabel $X$ at 167 12
            \pinlabel $X$ at 188 33
            \pinlabel {\small{$c$}} at 182 17
            \pinlabel $O$ at 254 33
            \pinlabel $X$ at 254 12
            \pinlabel $X$ at 232 33
            \pinlabel {\small{$c$}} at 247 17
            \pinlabel $O$ at 297 12
            \pinlabel $X$ at 319 12
            \pinlabel $X$ at 297 33
            \pinlabel {\small{$c$}} at 312 17
        \endlabellist
  \includegraphics[scale=1]{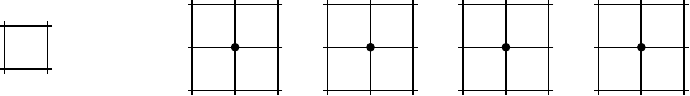}
  \caption{Left: an $X$-marked square in $\grid$. The distingushed 2x2 square of the diagram  $\grid'$ obtained from $\grid$ by stabilization at this $X$ marking, where the stabilization is of type \textit{X:SE}, \textit{X:NW}, \textit{X:SW}, and \textit{X:NE}, as seen from left to right.}
  \label{fig:stab}
\end{figure}

\begin{lemma}\label{lem:stab} 
Suppose $\grid'$ is obtained from $\mathbb G$ by a stabilization. 
    \begin{itemize}
    \item[\mylabel{itm:s1}{(S-1)}] 
        If $\grid'$ is obtained from $\grid$ by a stabilization of type \textit{X:SE} or \textit{X:NW}, then there exists an isomorphism $\GHmb(\grid') \isom \GHmb (\grid)$ of bigraded $\mathbb{F}[U]$-modules that takes $\lpb (\grid')$ to $\lpb (\grid)$ and takes $\lmb (\grid')$ to $\lmb(\grid)$.
    \item[\mylabel{itm:s2}{(S-2)}]
        If $\grid'$ is obtained from $\grid$ by a stabilization of type \textit{X:SW}, then there exists an isomorphism $\GHmb(\grid') \isom \GHmb (\grid)$ of bigraded $\mathbb{F}[U]$-modules that takes $\lpb(\grid')$ to $\lpb(\grid)$ and takes $\lmb(\grid')$ to $U \cdot \lmb(\grid)$.
    \item[\mylabel{itm:s3}{(S-3)}]
        If $\grid'$ is obtained from $\grid$ by a stabilization of type \textit{X:NE}, then there exists an isomorphism $\GHmb(\grid') \isom \GHmb (\grid)$ of bigraded $\mathbb{F}[U]$-modules that takes $\lpb(\grid')$ to $U \cdot \lpb(\grid)$ and takes $\lmb(\grid')$ to $\lmb(\grid)$.
    \end{itemize}
\end{lemma}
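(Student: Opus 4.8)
The plan is to prove each of \ref{itm:s1}--\ref{itm:s3} by the destabilization argument of \cite[Chapter~5]{OSS15}, carried out carefully enough to keep track of the double-point variable $v$, in the same spirit as Thakar's treatment of grid commutations \cite{T23} and our proof of \fullref{lem:comm}. One observation cuts the work in half: a $180^\circ$ rotation of a grid diagram is a symmetry of the whole construction --- it carries grid states to grid states and rectangles to rectangles preserving emptiness and the number of interior grid points, permutes the $O$-markings, and interchanges the canonical states $\xp$ and $\xm$ --- and it carries an \textit{X:SE} stabilization of $\grid$ to an \textit{X:NW} stabilization of the rotated diagram, and an \textit{X:SW} stabilization to an \textit{X:NE} one. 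So it suffices to prove \ref{itm:s1} for type \textit{X:SE} and \ref{itm:s2} for type \textit{X:SW}; applying these to the rotated diagram and conjugating by the rotation isomorphism yields the remaining subcase of \ref{itm:s1} and all of \ref{itm:s3}. Since $\GHmb$ is a smooth-link invariant we expect an honest isomorphism with no grading shift, so at each stage we will also check, using the Maslov and Alexander formulas of \fullref{ssec:background-GH}, that the maps we build are bigraded.

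Write $\grid'$ for the stabilized diagram, of size $n+1$, with distinguished $2\times 2$ square carrying markings $O_1, X_1, X_2$ and corner intersection point $c$, as in \fullref{fig:stab}. Following \cite[Chapter~5]{OSS15}, I would organize the grid states of $\grid'$ according to whether they contain $c$ and to their position relative to the two new grid circles; splitting off an acyclic piece supported near the stabilization then reduces $\GCmb(\grid')$ to a model complex on a set of generators in bijection with $S(\grid)$ --- in our notation, the complex $\ABcomplex$. The reduction is effected by the rectangle occupying the empty cell, which carries no markings and has no interior lattice point, so its coefficient is $1\in\mathbb F[V_1,\dots,V_n,v]$, and the standard cancellation/change of basis applies at the module level. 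The substance of the argument is to show that the differential induced on $\ABcomplex$ is \emph{exactly} $\bdmb$ for $\grid$, once one records which variable $V_i$ of $\grid'$ becomes the $U$-action variable of $\GHmb(\grid)$; the powers of the remaining $V_j$ transport correctly because $O$-multiplicities are additive under juxtaposition of domains, exactly as in the non-enhanced case \cite{OSS15,T23}.

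The genuinely new ingredient is the behaviour of $v$. The domains contributing to the reduced differential are the rectangles of $\grid$, together with the ``destabilized'' domains obtained by running a rectangle of $\grid'$ into the canceled $1\times1$ cell and back out, possibly wrapping once around one of the new grid circles. Since the $1\times1$ cell and the thin annuli along the new row and column contain no interior lattice point of any grid state, inserting or deleting the portion of a domain supported there does not change $\#(\Int(r)\cap\x)$; and a domain wrapping once around a new circle stays inside a thin strip and contributes a single factor of $v$ precisely when it genuinely encircles a point, in complete parallel with the long-pentagon analysis in the proof of \fullref{lem:comm} and with \cite[Lemma~4.2]{T23}. Thus the reduced differential agrees with $\bdmb$ on $\grid$ on the nose, and $\ABcomplex$ is identified with $\GCmb(\grid)$ as a bigraded $\mathbb F[U]$-module. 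It then remains to locate the canonical generators: in each case $\xpm(\grid')$ consists of the points directly northeast (resp.\ southwest) of all the $X$-markings, including $X_1$ and $X_2$, and a direct inspection of the $2\times 2$ square shows whether $\xp(\grid')$ and $\xm(\grid')$ lie in the part identified with $\ABcomplex$ or are linked to it only through the canceled component. Tracing them through the reduction yields the stated formulas: for type \textit{X:SE} both canonical states are carried to the corresponding canonical states of $\grid$ with coefficient $1$, while for type \textit{X:SW} the state $\xm(\grid')$ can be matched to a destabilized state only through a rectangle containing $O_1$, producing the factor $U$. As all the rectangles in play are thin, no power of $v$ intervenes, so these computations reduce to the ones for the non-enhanced GRID invariants in \cite[Chapter~13]{OSS15} and \cite[Section~3]{BLW22}.

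The step I expect to be the main obstacle is the one just before: verifying that $v$ is transported correctly through the destabilization, i.e.\ that ``deleting the part of a domain lying inside the distinguished $2\times 2$ square'' is a bijection onto the (possibly long) rectangles of $\grid$ that preserves the exponent of $v$. This amounts to redoing, in the presence of the double-point variable, the local domain analysis underlying the stabilization invariance of \cite[Chapter~5]{OSS15}; as in the proof of \fullref{lem:comm}, I expect it to reduce to a finite case check counting how many corners of one shape lie in the interior of another. A secondary, purely bookkeeping point is keeping the $V_i$-labels consistent across a destabilization that changes the number of $O$-markings, so that the two $\mathbb F[U]$-module structures are correctly matched.
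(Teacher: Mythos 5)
Your reduction of the four stabilization types to \textit{X:SE} and \textit{X:SW} via the $180^\circ$ rotation symmetry is sound (the paper itself invokes rotation to get \textit{X:NW} from \textit{X:SE}), and your endgame tracking of the canonical generators matches what actually happens: for \textit{X:SE} both $\xp(\grid')$ and $\xm(\grid')$ contain the new intersection point $c$ and are carried straight across, while for \textit{X:SW} the state $\xm(\grid')$ reaches a destabilized state only through a thin rectangle containing an $O$ marking, which produces the factor of $U$.

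The core of your plan has a genuine gap, however. You propose to cancel an ``acyclic piece'' along the unit-coefficient rectangle in the empty cell and conclude that $\GCmb(\grid')$ reduces to a complex freely generated by $S(\grid)$ whose induced differential is \emph{exactly} $\bdmb$ for $\grid$, with the matching of $V$-variables ``purely bookkeeping.'' This cannot work in the unblocked flavor. Gaussian elimination does not change the ground ring, so your reduced complex would still be free over $\mathbb F[V_1,\dots,V_n,v]$, where $V_1$ is the variable of the new marking $O_1$; if its differential were literally $\bdmb(\grid)$, in which $V_1$ does not appear, its homology would be $\GHmb(\grid)\otimes_{\mathbb F}\mathbb F[V_1]$, which is not isomorphic to $\GHmb(\grid)$ as a bigraded $\mathbb F[U]$-module, contradicting the lemma you are trying to prove. (Also, cancelling the small empty rectangle only pairs off the states containing one diagonal of that cell against those containing the other diagonal, so it does not by itself bring the generators down to $S(\grid)$.) The genuinely nontrivial point, which your sketch dismisses, is that the new variable $V_1$ must be shown to act like $V_2$ up to homotopy; this is exactly what the paper's proof (following OSS15) encodes by viewing $\GCmb(\grid')$ as the mapping cone of the differential between the submodules of states containing and not containing $c$, and comparing it with $\Cone(V_1-V_2\colon \GCmb(\grid)[V_1]\to\GCmb(\grid)[V_1])$, whose homology is $\GHmb(\grid)$ by \cite[Lemma 5.2.16]{OSS15}. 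Concretely, the paper builds a chain homotopy equivalence $\hxnb$ counting (not necessarily empty) rectangles whose interior meets $\XX$ exactly in $X_2$, weighted by powers of $v$; verifies the commutative square against $V_1-V_2$; and---this is where the double-point enhancement genuinely bites---constructs the homotopy $\hoxmb$ using \emph{long} rectangles of width one through $O_1$ and $X_2$, the analogue of the long pentagons in \fullref{lem:comm}. Your remark about domains wrapping once around the new circles gestures at this, but without the cone comparison (or a direct proof that $V_1$ and $V_2$ are chain homotopic and that your destabilization map is a quasi-isomorphism over $\mathbb F[V_2,\dots,V_n,v]$) the claimed isomorphism of $\mathbb F[U]$-modules is not established.
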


\begin{proof}
We begin with Part~\ref{itm:s1}, focusing on the case \textit{X:SE}. Let $O_1$ be the new $O$ marking, let $X_2$ be the new $X$ marking directly below $O_1$, and let $O_2$ be the $O$ marking in the same row as $X_2$. 
With this labeling, $\GCmb(\grid')$ is a chain complex over $\mathbb F \left[V_1, \ldots, V_n,v\right]$, while $\GCmb(\grid)$ is a chain complex over $\mathbb F \left[V_2, \ldots, V_n,v\right]$, where $n$ is the size of the grid $\grid'$.
We generalize the proof of \cite[Lemma 6.4.6 Case (S-1)]{OSS15} to double-point enhanced grid homology. Similarly to the previous section, we again need to introduce maps that also count long domains.  

Let $c$ be the intersection point of the two new curves in $\grid'$. Partition the grid states $S(\grid')$ into $I(\grid')\sqcup N(\grid')$, where $I(\grid')$ consists of the grid states that contain $c$, and $N(\grid')$ consists of the grid states that do not contain $c$. Write $I$ and $N$ for the corresponding $\mathbb F \left[V_1, \ldots, V_n, v\right]$-submodules of $\GCmb(\grid')$ and write the differential $\bdmb$ on $\GCmb(\grid') \cong N\oplus I$ as a $2\times 2$ matrix
  \[
  \bdmb = 
  \begin{pmatrix}
    \bdy_I^I & \bdy_I^N \\
    0 & \bdy_N^N
  \end{pmatrix}.
  \]
In this notation, we see that $\GCmb(\grid')$ is the mapping cone of $\bdy_I^N\colon (N, \bdy_N^N) \to (I, \bdy_I^I)$.

As for $\GCmb(\grid)$, \cite[Lemma 5.2.16]{OSS15} implies that on the level of homology we have a bigraded isomorphism of $\mathbb F \left[V_2, \ldots, V_n, v\right]$-modules
\[H(\Cone(V_1-V_2\colon \GCmb(\grid)[V_1]\to \GCmb(\grid)[V_1]))\cong \GHmb(\grid).\]
Furthermore, for a cycle $x\in \GCmb(\grid)$ this isomorphism identifies the homology class $[x]\in \GHmb(\grid)$ with the homology class $[(0,x)]\in H(\Cone(V_1-V_2\colon \GCmb(\grid)[V_1]\to \GCmb(\grid)[V_1]))$, where $(0,x)\in \GCmb(\grid)[V_1]\oplus \GCmb(\grid)[V_1]$. Thus, to complete the proof, we just need to exhibit a quasi-isomorphism from $\Cone (V_1-V_2)$ to $\Cone(\bdy_N^I)$ that takes $(0,\xp)$ to $(0,\xp)$ and $(0,\xm)$ to $(0,\xm)$. To do this, we will define chain homotopy equivalences
\[
e' \colon \GCmb(\grid)\to I, \qquad
\hxnb \colon I\to N
\]
that give rise to a commutative diagram
  \begin{equation}
    \xymatrix{
      N  \ar[rr]^{\bdy_N^I}
      & & I \\
      & & \\
      \GCmb(\grid)[V_1]\left\llbracket 1, 1 \right\rrbracket \ar[rr]^{V_1-V_2} \ar[uu]^{\hxnb\circ e'}
      & & \GCmb(\grid)[V_1] \ar[uu]_{e'}
    }
     \label{diag:X:SE}
  \end{equation}
with $e'(\xp)=\xp$ and $e'(\xm) = \xm$.    
By \cite[Lemma 5.2.12]{OSS15}, these maps will induce the desired quasi-isomorphism. 

 We proceed with defining the maps $e'$ and $\hxnb$ and establishing their properties. The map $e'$ is the isomorphism induced by the natural identification of $\SG$ and $I$. The map $\hxnb\colon I\to N$ is the $\mathbb F [V_1, \ldots, V_n, v]$-module homomorphism defined on grid states by counting (not necessarily empty) rectangles whose interior intersects $\XX$ in $X_2$: 
\[
 \hxnb(\x) = \sum_{\y\in N(\grid')} \sum_{\substack{r\in \Rect(\x,\y)\\ r\cap \XX=X_2}} V_{1}^{O_{1}(r)}V_{2}^{O_{2}(r)}\cdots V_{n}^{O_{n}(r)}v^{\#(\Int(r)\cap\x)}\cdot \y. 
 \]

Since $e'$ is an isomorphism, we only need to show that $\hxnb \colon I\to N$ is a chain homotopy equivalence.

We first show that $\hxnb$ is a chain map, i.e.,
\[\hxnb\circ \bdy_I^I+\bdy_N^N\circ \hxnb = 0.\]

To do this, we will show that juxtapositions of rectangles that contribute to $\hxnb\circ \bdy_I^I+\bdy_N^N\circ \hxnb \colon I\to N$ cancel in pairs.

Let $\x\in I$ and $\y\in N$. Let $\psi\in\pi(\x,\y)$ be a domain that can be decomposed as $r_{1}\ast r_{2}$ such that one of $r_{1},r_{2}$ contributes to $\hxnb$ and the other to $\bdy_I^I$ or $\bdy_N^N$. 
Note that $\x$ and $\y$ must differ at least at one point, as $c$ is a coordinate of $\x$ and not of $\y$. Since  $r_{1}$ and $r_{2}$ are rectangles, $\x$ and $\y$ must then differ at $4$ or $3$ points. We consider the two cases below.

\begin{enumerate}
\item Suppose $|\x\setminus\y| = 4$.

In this case, the domain has a unique alternate decomposition $r_{3}\ast r_{4}$, where $r_2$ and $r_3$ have the same support,  $r_1$ and $r_4$ have the same support, and $r_{1}\ast r_{2}$ and $r_{3}\ast r_{4}$ contribute to opposite summands of $\hxnb\circ \bdy_I^I (\x)+\bdy_N^N\circ \hxnb(\x)$. 
As in the proof of \fullref{lem:comm}, we see that the two juxtapositions contribute the same total powers of $v$ and of each $V_i$ to the coefficient of $\y$; for $v$, the case analysis of the number of corners of each shape in the interior of the other is covered by \cite[Lemma 4.2]{T23}.
Thus, the coefficient of $\y$ in $\hxnb\circ \bdy_I^I (\x)+\bdy_N^N\circ \hxnb(\x)$ is zero.

\item Suppose $|\x\setminus\y| = 3$.

In this case, $\phi$ can be embedded into a fundamental domain for the torus and has a one reflex angle. The domain has a unique alternate decomposition $r_{3}\ast r_{4}$, by cutting the other way at the reflex angle, which also contributes to $\hxnb\circ \bdy_I^I (\x)+\bdy_N^N\circ \hxnb(\x)$.
The two decompositions contribute equal coefficients to $\y$ by \cite[Lemma 4.3]{T23}\footnote{We remark on a small inaccuracy in \cite[Lemma 4.3]{T23}: the claim ``$\mathcal T = 0$'' is incorrect, but does not affect the validity of the general argument.}.
\end{enumerate}

Next, we will show that $\hxnb$ is a chain homotopy equivalence.

Define $\mathbb F \left[V_1, \ldots, V_n,v\right]$-module maps $\homb \colon N \to I$ and $\hoxmb \colon N \to N$ as follows. The map  $\homb$ is defined by
\[
\homb(\x) = \sum_{\y\in I(\grid')} \sum_{\substack{r\in\Rect(\x,\y)\\O_1 \in r, \, \Int(r)\cap \mathbb{X}=\emptyset}} V_{2}^{O_{2}(r)}\cdots V_{n}^{O_{n}(r)}v^{\#(\Int(r)\cap\x)}\cdot \y.
\]
To define $\hoxmb$, we need to introduce one new type of rectangle. Given $\x, \y\in \grid'$, a \emph{long rectangle $r$ from $\x$ to $\y$} is an embedded rectangle in the universal cover of the toroidal diagram satisfying the same properties as rectangles in $\Rect(\x, \y)$ (modified appropriately, namely,  we require that the vertices project to $\x \triangle \y$ and that the signed set $\bdy (\bdy r \cap \betas)$ projects to $\x - \y$), along with the requirement that the rectangle has width one and that its projection, which we also denote $r$, onto the torus has at least one point of multiplicity $2$ and at least one point of multiplicity $1$; cf.\ \cite[Definition 3.2]{T23}. Let $\Rect^l_{O_1, X_2}(\x, \y)$ be the set of long rectangles from $\x$ to $\y$ whose projection to $\grid'$ has multiplicity one at $O_1$ and at $X_2$. Now define
\begin{align*}
\hoxmb(\x) &= \sum_{\y\in N(\grid')} \sum_{\substack{r\in\Rect(\x,\y)\\O_1 \in r, \, \Int(r)\cap \mathbb{X}=X_2}} V_{2}^{O_{2}(r)}\cdots V_{n}^{O_{n}(r)}v^{\#(\Int(r)\cap\x)}\cdot \y\\
& + \sum_{\y\in N(\grid)} \sum_{r\in\Rect^l_{O_1, X_2}(\x,\y)} v \cdot \y.
\end{align*}

That is,  $\hoxmb$ counts embedded rectangles containing $O_1$, $X_2$, and possibly other $O$ markings, as well as long rectangles of width one with multiplicity one at $O_1$ and $X_2$. 

First, we show  that
\begin{equation}
\label{eqn:stab-inv}
\homb \circ \hxnb = \Id_I.
\end{equation}

Suppose $r_1 \ast r_2$ is a juxtaposition counted by $\homb \circ \hxnb$. Note that $c$ is a component of the starting generator for $r_1$ as well as a component of the ending generator for $r_2$. But $c$ is also a moving corner for both rectangles, so  $r_1$ and $r_2$ must share two corners on the circle $\alpha_1$. Thus, the domain for $r_1 \ast r_2$ must be a thin annulus, so $r_1$ takes the starting generator $\x \in I$ to some $\y \in N$, and $r_2$ takes $\y \in N$ back to $\x$. This proves \fullref{eqn:stab-inv}.

We now show that
\begin{equation}
\label{eqn:stab-homot}
\hxnb \circ \homb = \Id_N + \partial_N^N \circ \hoxmb + \hoxmb \circ \partial_N^N.
\end{equation}
Suppose a domain decomposes as $r_1 \ast r_2$, where $r_1$ contributes to the map $\homb$ and $r_2$ to $\hxnb$. We will show that this domain either has exactly one alternate decomposition into a pair of rectangles contributing to the second or third term on the right hand side of \fullref{eqn:stab-homot}, or is a thin annulus containing $O_1$ and $X_2$ and this contributes to the map $\Id_N$.

There are five types of domains that can be decomposed as juxtapositions $r_1 \ast r_2$ which contribute to $\hxnb \circ \homb$; see \fullref{fig:stab-inv}. 
\begin{figure}[ht]
    \labellist
        \pinlabel \small{$O_1$} at 37 136
        \pinlabel \small{$X_2$} at 37 120
        \pinlabel \small{$X_1$} at 51 136
        \pinlabel \small{$O_1$} at 37 57
        \pinlabel \small{$X_2$} at 37 41
        \pinlabel \small{$X_1$} at 51 57
        \pinlabel \small{$O_1$} at 121 136
        \pinlabel \small{$X_2$} at 121 120
        \pinlabel \small{$X_1$} at 135 136
        \pinlabel \small{$O_1$} at 121 57
        \pinlabel \small{$X_2$} at 121 41
        \pinlabel \small{$X_1$} at 135 57
        \pinlabel \small{$O_1$} at 179 122
        \pinlabel \small{$X_2$} at 179 106
        \pinlabel \small{$X_1$} at 193 122
        \pinlabel \small{$O_1$} at 268 128
        \pinlabel \small{$X_2$} at 268 112
        \pinlabel \small{$X_1$} at 282 128
        \pinlabel \small{$O_1$} at 268 32
        \pinlabel \small{$X_2$} at 268 16
        \pinlabel \small{$X_1$} at 282 32
        \pinlabel \small{$O_1$} at 352 156
        \pinlabel \small{$X_2$} at 352 140
        \pinlabel \small{$X_1$} at 366 156
        \pinlabel \small{$O_1$} at 352 58
        \pinlabel \small{$X_2$} at 352 42
        \pinlabel \small{$X_1$} at 366 58
    \endlabellist
    \includegraphics[scale=1]{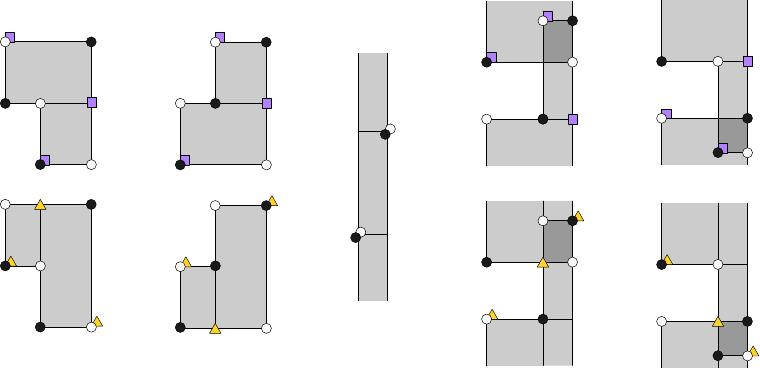}
    \caption{The five types of juxtapositions of rectangles that contribute to $\hxnb \circ \homb$. In each case, the starting grid state is given by the black circles and the terminal grid state by the white circles. Juxtapositions of each type except the third cancel out in pairs; intermediate generators for canceling pairs are depicted by purple squares and yellow triangles.}
    \label{fig:stab-inv}
\end{figure}
In the first two cases, shown in the first two columns of \fullref{fig:stab-inv}, the alternate decomposition is counted by $\partial_N^N \circ \hoxmb$ (first column) or $\hoxmb\circ\partial_N^N$ (second column). In the third case, shown in the third column of \fullref{fig:stab-inv},  the domain for $r_1 \ast r_2$ is the thin annulus containing $O_1$ and $X_2$, so $r_1$ takes the starting generator $\x \in N$ to some $\y \in I$, and $r_2$ takes $\y \in I$ back to $\x$, canceling the term of $\Id_N(\x)$.  Finally, the new cases where the interior of the domain contains a double point are also canceled by $\hoxmb\circ\partial_N^N$ and $\partial_N^N \circ \hoxmb$, respectively, where the $\hoxmb$ term is a long rectangle, as shown in the last two columns of \fullref{fig:stab-inv}. 
Other terms of $\hoxmb \circ \partial_N^N$ and $\partial_N^N \circ \hoxmb$ cancel in pairs.

This completes the proof that $\hxnb$ is a chain homotopy equivalence. 

Last, we show that the square in \fullref{diag:X:SE} commutes, which is equivalent to showing that $\bdy_N^I\circ \hxnb = V_1 - V_2$.\footnote{Note that $V_2 = -V_2$ in our ring; we write the minus sign simply since that would be the correct expression when working with $\Z$ coefficients.} 
Let $\x\in I$ and suppose $r_1 \ast r_2$ is a juxtaposition counted by $\bdy_N^I\circ \hxnb (\x)$. Note that $c$ is an initial corner for $r_1$ and a terminal corner for $r_2$, so either $r_1$ and $r_2$ share two corners on $\alpha_1$, or they share two corners on $\beta_1$. In the first case, the  domain of $r_1 \ast r_2$ is the thin vertical annulus containing $O_1$, $r_1$ is the unique width-one rectangle that starts at $\x$ and contains $X_2$, and $r_2$ is the unique width-one rectangle that starts at the the terminal generator of $r_1$ and contains $O_1$; the contribution of $r_1 \ast r_2$ to $\bdy_N^I\circ \hxnb (\x)$ is $V_1\x$. In the second case, the domain of $r_1 \ast r_2$ is the thin horizontal annulus containing $O_2$, $r_1$ is the unique height-one rectangle that starts at $\x$ and contains $X_2$, and $r_2$ is the unique height-one rectangle that starts at the the terminal generator of $r_1$ and contains $O_2$; the contribution of $r_1 \ast r_2$ to $\bdy_N^I\circ \hxnb (\x)$ is $V_2\x$. Thus, $\bdy_N^I\circ \hxnb (\x) = (V_1 - V_2)\x$.

Stabilizations of type \textit{X:NW} follow directly from this case by rotating each diagram by 180 degrees. Destabilizations follow since the maps are quasi-isomorphisms. 

This completes the proof of \ref{itm:s1}.
\\

The proof of Part \ref{itm:s2} is similar. This time, the argument relies on a commutative diagram 
  \[
    \xymatrix{
      I \ar[dd]_{e} \ar[rr]^{\bdy_I^N}
      & & N \ar[dd]^{e\circ \hxib}\\
      & & \\
      \GCmb(\grid)[V_1]\left\llbracket 1, 1 \right\rrbracket \ar[rr]^{V_1-V_2} 
      & & \GCmb(\grid)[V_1] 
    }
  \]
where $e\colon  I \to \GCmb(\grid)$ is the inverse of the map $e'$ defined earlier (i.e., the isomorphism induced by the natural identification of $\SG$ and $I$ in the opposite direction) 
and $\hxib$ is once again a map that counts rectangles whose interior intersects $\XX$ in $X_2$, this time from $N$ to $I$.

The proof that $\hxib$ is a chain homotopy equivalence and the diagram commutes is analogous to that of \ref{itm:s1}. Specifically, the domain decomposition analysis is identical to that in \ref{itm:s1} with the diagram reflected about the vertical line through $c$.

Thus, we have the desired isomorphism on homology.

Last, we verify that this isomorphism acts as claimed on $\lpb(\grid')$ and $\lmb(\grid')$.

This time, the canonical generators $\xp(\grid')$ and $\xm(\grid')$ are elements of $N$. The only rectangle starting from $\xp(\grid')$ counted by $\hxib$ is the complement of the square that contains $O_1$ in the corresponding thin vertical annulus; note that the only marking in the interior of this rectangle is $X_2$. The terminal grid state $\y$ for this rectangle is sent to $\xp(\grid)$ via the map $e$. Similarly to  \ref{itm:s1}, this shows that the isomorphism on homology sends $\lpb(\grid')$ to $\lpb(\grid)$. 

Similarly, the only rectangle starting from $\xm(\grid')$ counted by $\hxib$ is the complement of the square that contains $O_1$ in the corresponding thin horizontal annulus. But this rectangle contains an $O$ marking, so $\hxib(\xm(\grid'))= U\cdot \y$, where $\y$ is the terminal grid state for the rectangle. Once again $e$ sends $\y$ to $\xm(\grid)$, so we see that the isomorphism on homology sends $\lmb(\grid')$ to $U\cdot \lmb(\grid)$. 
\\

Part~\ref{itm:s3} follows similarly. 
\end{proof}

\subsection{The proof of invariance completed}

\fullref{lem:comm} and \fullref{lem:stab} now easily imply the main results in  \fullref{ssec:intro-invts}. First, we show that $\xpm$ give rise to Legendrian link invariants $\lpmb$ and $\lhpmb$.

\begin{proof}[Proof of \fullref{thm:legm}]
If the toroidal grid diagrams $\grid$ and $\grid'$ represent the same Legendrian link, then by \cite[Proposition 12.2.6]{OSS15} there is sequence of commutations and (de)stabilizations of type \textit{X:SE} and \textit{X:NW} that transforms $\grid$ to $\grid'$. The second part of \fullref{lem:comm} shows that the homology classes of $\xpm$ are identified by the isomorphisms on homology induced by commutations. Part~\ref{itm:s1} of \fullref{lem:stab} shows that the homology classes are identified by the isomorphisms on homology induced by (de)stabilizations. This completes the proof of invariance.
\end{proof}

\begin{proof}[Proof of \fullref{thm:legh}]
The corollary follows directly from \fullref{thm:legm}, by specializing to the quotient $\GHhb$.
\end{proof}

\begin{proof}[Proof of \fullref{prop:tilde-hat}]
This follows for the same reasons as for the non-enhanced invariants. Namely, the projection $\GChb(\grid)\to \GCtb(\grid)$ induces an injection on homology. But this projection carries $\xpm$ to $\xpm$, so the result follows. 
\end{proof}

Next, we show that we also get a transverse link invariant $\thetab$: 

\begin{proof}[Proof of \fullref{thm:transverse}]
Since transverse links are just Legendrian links up to negative stabilization, and negative stabilizations can be realized by grid stabilizations of type \textit{X:SW}, the result follows from \fullref{thm:legm} and Part~\ref{itm:s2} of \fullref{lem:stab} in the case of $\xp$.
\end{proof}

More generally, we see that $\lpb$ and $\lmb$ behave like $\lp$ and $\lm$ under stabilization:

\begin{proof}[Proof of \fullref{thm:leg-stab}]
This follows from \fullref{thm:legm} and Parts~\ref{itm:s2} and \ref{itm:s3} of \fullref{lem:stab}.
\end{proof}

\begin{proof}[Proof of \fullref{cor:stab-vanishing}]
This follows immediately from \fullref{thm:leg-stab}, by specializing to  $\GHhb$.
\end{proof}

\section{The double-point enhanced GRID invariant: Obstructing decomposable Lagrangian cobordisms}\label{sec:cob}

As in \cite{JPSWW22}, we will prove \fullref{thm:cob} by defining analogous filtered chain maps corresponding to pinch and birth moves, so that the induced maps on spectral sequences preserve the double-point enhanced GRID classes. Recall once again that we don't know whether the canonical elements induce GRID invariants in the {\bf filtered} double-point enhanced grid homology, or even whether the filtered double-point enhanced grid homology is a link invariant. So at present, we only care that the maps our filtered maps induce on associated graded objects imply there is an obstruction in the double-point enhanced {\bf fully blocked} setting. A proof of stabilization and destabilization invariance in the double-point enhanced filtered setting would immediately upgrade our result to a filtered one, a double-point enhanced analogue of \cite{JPSWW22}.

\subsection{Pinches} 
\label{sec:pinch}

Suppose $\Legp$ is a Legendrian link obtained from a Legendrian link $\Legm$ by a pinch move. Then there exist grid diagrams $\Gp$ and $\Gm$ representing $\Legp$ and $\Legm$, respectively, such that they differ only in the placement of one pair of $X$ markings or one pair of $O$ markings in adjacent rows, as in \fullref{fig:sep}. If the diagrams differ at their $X$ markings, we say $\Gp$ is obtained from $\Gm$ by an $X$ swap, otherwise we say $\Gp$ is obtained from $\Gm$ by an $O$ swap. It can be guaranteed that the two swapped markings are separated by at least two vertical lines.
\begin{figure}[ht]
        \labellist
    \pinlabel $O$ at 7 111
    \pinlabel $X$ at 43 111
    \pinlabel $X$ at 80 93
    \pinlabel $O$ at 117 93
    \pinlabel $X$ at 7 38
    \pinlabel $O$ at 43 38
    \pinlabel $O$ at 80 19
    \pinlabel $X$ at 117 19
    \pinlabel $O$ at 148 111
    \pinlabel $X$ at 221 111
    \pinlabel $X$ at 184 93
    \pinlabel $O$ at 257 93
    \pinlabel $X$ at 148 38
    \pinlabel $O$ at 221 38
    \pinlabel $O$ at 184 19
    \pinlabel $X$ at 257 19
    \pinlabel $\G_-$ at 64 -10
    \pinlabel $\G_+$ at 204 -10
    \pinlabel $\Lambda_-$ at 322 -10
    \pinlabel $\Lambda_+$ at 382 -10
    \endlabellist
     \includegraphics[scale=1]{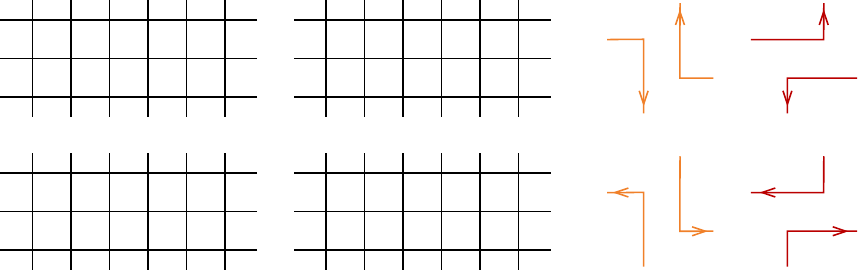}
    \vspace{0.3cm}
  \caption{The grid diagrams $\G_\pm$ corresponding to the two types of pinch moves. Top: An $X$ swap. Bottom: An $O$ swap.}
    \label{fig:sep}
    \end{figure}

We combine $\Gp$ and $\Gm$ into a single \emph{combined diagram}, as in \fullref{fig:combo}. We label the curves separating the swapped markings by $\alpha$ and $\alpha'$, so that $\alpha$ corresponds to $\Gp$ and $\alpha'$ corresponds to $\Gm$, and we label the two intersection points of $\alpha$ and $\alpha'$ by $a$ and $b$, as seen in \fullref{fig:combo}. We will use $a$ and $b$ to define the maps in \fullref{prop:x-swap-filt} and \fullref{prop:o-swap-filt}.

\begin{figure}[ht]
    \labellist
    \pinlabel $O$ at 27 55
    \pinlabel $X$ at 82 36
    \pinlabel $X$ at 120 36
    \pinlabel $O$ at 175 19
    \pinlabel {\small{$a$}} at 101 48
    \pinlabel $X$ at 252 55
    \pinlabel $O$ at 307 36
    \pinlabel $O$ at 345 36
    \pinlabel $X$ at 400 19
    \pinlabel {\small{$b$}} at 381 48
    \pinlabel \textcolor{Maroon}{$\alpha$} at -10 46
    \pinlabel \textcolor{Orange}{$\alpha'$} at -10 28
    \pinlabel \textcolor{Maroon}{$\alpha$} at 215 46
    \pinlabel \textcolor{Orange}{$\alpha'$} at 215 28
    \endlabellist
  \includegraphics[scale=1]{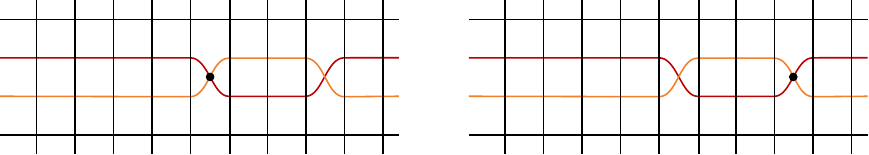}
  \caption{Left: The combined diagram corresponding to an $X$ swap. Right: The combined  diagram corresponding to an $O$ swap.}
\label{fig:combo}
\end{figure}

\begin{lemma}
\label{prop:x-swap-filt}
Suppose that $\Gp$ is obtained from $\Gm$ by an X swap. Then there exists a filtered chain homomorphism
\[\xswapb: \fGCtb(\Gp) \to \fGCtb(\Gm)\left\llbracket 1, \frac{\left|\Legp\right| - \left|\Legm\right| +1}{2} \right\rrbracket\]
such that
\[\xswapb(\xpm(\Gp)) = \xpm(\Gm) + \y,\]
where
\[\y \in \mathcal F_{A(\xpm(\Gm))-1}\fGCtb(\Gm).\]
\end{lemma}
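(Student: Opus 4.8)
The plan is to imitate the proof of \fullref{lem:comm}, now working on the $X$-swap combined diagram of \fullref{fig:combo} (left) in place of the commutation combined diagram of \fullref{fig:comm}, and using its distinguished intersection point $a$ of $\alpha$ and $\alpha'$. First I would define, exactly as in \fullref{def:pent} but on this diagram, the spaces $\Pento$ and $\Pentol$ of ordinary and long pentagons from a grid state $\x \in \SGp$ to a grid state $\y' \in \SGm$ that have a vertex at $a$ and whose interiors contain no $O$ markings. In contrast to the commutation case, such a pentagon is forced to sweep across $X$ markings, and it is precisely these $X$ markings that produce the Alexander shift in the statement. I would then set
\[
  \xswapb(\x) \;=\; \sum_{\y' \in \SGm}\ \sum_{p \in \Pento} v^{\#(\Int(p) \cap \x)} \cdot \y'
  \;+\; \sum_{\y' \in \SGm}\ \sum_{p \in \Pentol} v \cdot \y',
\]
extended $\mathbb{F}[v]$-linearly, in complete analogy with the formula for $\commmapb$.

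Next I would verify the bidegree. A computation analogous to those done for $\commmapb$ and in \cite[Lemma~4.1]{JPSWW22} --- now also accounting for the change in the number of link components, which enters through the $\frac{n-l}{2}$ normalization in the Alexander function --- shows that, after shifting the target by $\llbracket 1, \frac{\abs{\Legp} - \abs{\Legm} + 1}{2}\rrbracket$, the map $\xswapb$ preserves the Maslov grading and does not increase the Alexander filtration; that is, it is a filtered homomorphism of the stated bidegree. The same computation shows that, among the pentagons issuing from $\xpm(\Gp)$, the ``thin'' one near $a$ --- the one whose support is a bigon cut off by a $\betas$-circle --- contains the fewest $X$ markings, and hence lies in the top filtration level of the shifted complex.

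Then I would show that $\xswapb$ is a chain map, by the mechanism used in \fullref{lem:comm}: juxtapositions of a pentagon with a rectangle contributing to $\xswapb \circ \bdtb$ or to $\bdtb \circ \xswapb$ occur in canceling pairs, organized according to whether the initial and terminal grid states of the underlying domain differ at $4$, $3$, or $1$ points. The cases $\abs{\x \setminus \z'} = 4$ and $\abs{\x \setminus \z'} = 3$ --- including the bookkeeping of $v$-powers via the corner counts of \cite[Lemmas~4.2 and~4.3]{T23} and the long-pentagon discussion around \fullref{fig:longpent} --- transfer with no essential change. The case $\abs{\x \setminus \z'} = 1$, where the domain consists of a thin $O$-free annulus together with a triangular piece of a bigon cut off by a $\betas$-circle, as in \cite[Lemma~5.4.1 (P-3)]{OSS15}, must be re-examined on the $X$-swap diagram: the two bigons near $a$ now contain $X$ markings rather than being unmarked, which is consistent with $\bdtb$ and $\xswapb$ forbidding only $O$ markings, so canceling pairs once more contribute equal powers of $v$ and of each $V_i$. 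I expect the main obstacle to be precisely this degenerate case together with the check that the new long-pentagon terms --- carrying powers of $v$ absent from the non-enhanced setting of \cite{JPSWW22} --- do not spoil the cancellation; the rest is a faithful adaptation of the arguments in \fullref{lem:comm} and \cite{JPSWW22,T23}.

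Finally I would identify $\xswapb(\xpm(\Gp))$. Away from the two swapped rows, $\xpm(\Gp)$ and $\xpm(\Gm)$ agree, and near $a$ there is a unique $O$-free pentagon from $\xpm(\Gp)$ of maximal Alexander grading, namely the thin pentagon with a vertex at $a$; a direct inspection of the local picture, as for $\commmapb$, shows that its interior contains no point of $\xpm(\Gp)$, so it contributes with coefficient $v^{0} = 1$ and has terminal state $\xpm(\Gm)$. Every other pentagon from $\xpm(\Gp)$ counted by $\xswapb$ contains strictly more $X$ markings, hence contributes a term of strictly lower Alexander filtration. Therefore $\xswapb(\xpm(\Gp)) = \xpm(\Gm) + \y$ with $\y \in \mathcal{F}_{A(\xpm(\Gm)) - 1}\fGCtb(\Gm)$, which is the assertion of the lemma.
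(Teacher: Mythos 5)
Your outline coincides with the paper's proof: define $\xswapb$ by counting pentagons with a vertex at $a$ and no $O$ markings in their interior, weighted by $v^{\#(\Int(p)\cap\x)}$; check the bidegree via \cite[Lemma 4.1]{JPSWW22} together with the observation that each interior double point drops the Maslov grading by two and is exactly compensated by the factor of $v$; prove the chain-map identity by pairing pentagon--rectangle juxtapositions as in \fullref{lem:comm}; and trace $\xpm(\Gp)$ through the unique $X$-free pentagon, whose terminal state is $\xpm(\Gm)$, with all other pentagons landing in strictly lower Alexander filtration.

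The one place where your argument misreads the combined diagram is the degenerate case and the long pentagons. For an $X$ swap, the two bigons bounded by $\alpha$ and $\alpha'$ contain the two swapped $X$ markings, and consequently \emph{every} thin annulus on the combined diagram --- the $n-2$ ordinary rows as well as the regions lying above both curves and below both curves inside the affected two-row strip --- contains an $O$ marking. Hence there are no $O$-free long pentagons (so the $\Pentol$ sum in your definition is empty), and there are no $O$-free domains of annulus-plus-triangle type at all: the case $\abs{\x\setminus\z'}=1$ is vacuous, rather than being handled by pairing two unmarked annuli as in the commutation setting. Your assertion that such domains exist (with $X$ markings in the bigons) and ``cancel in pairs'' is therefore incorrect as stated; if they did exist, your pairing would require two $O$-free annuli, which this diagram does not possess. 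Since the case contributes nothing either way, the chain-map identity is unaffected, and your proof goes through once that paragraph is replaced by the observation above --- which is precisely how the paper disposes of it. Two smaller slips: $\fGCtb$ is defined over $\mathbb F[v]$, so there are no $V_i$ powers to track in this lemma; and the distinguished pentagon out of $\xpm(\Gp)$ is $X$-free (not merely the one with ``fewest'' $X$ markings), and its support is a genuine pentagon --- the piece of a bigon cut off by a vertical circle is a triangle, which is the shape used for the $O$ swap, not here.
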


\begin{proof}
We proceed similarly as in \cite[Lemma 4.1]{JPSWW22}, this time allowing the chain map to also count pentagons with components of the starting generator in their interior.

Given $\x \in \SGp$ and $\y \in \SGm$, define sets of pentagons $\Pent(\x,\y)$ and $\Pent_{\OO}(\x,\y)$ as in \fullref{def:pent}; note that the definition of a pentagon implies that $a$ is one of its five vertices.

 Let
\[\xswapb\colon \fGCtb(\G_+) \to \fGCtb(\G_-)\]
be the linear map defined by
\[\xswapb(\mathbf{x}) = \sum_{\y\in\boldsymbol{S}(\mathbb G_-)}\ \sum_{p\in\Pent_{\OO}(\x,\y)} v^{\#(\Int(p)\cap\x)}\cdot \y.\]

The proof that $\xswapb$ is a chain map is similar to \cite[Lemma 4.1]{JPSWW22}. The primary difference is that the domains we consider may contain components of the starting generator in their interior. The domain analysis is analogous to that in \fullref{sec:comm}, except that here we do not have the cases of thin annuli or long pentagons, since all thin annuli on the combined diagram contain  $O$ markings. 

Now, we look at Maslov grading and Alexander filtration. Let $p \in \Pent(\x,\y)$ be a pentagon from $\x$ to $\y$ with $\Int(p) \intersect \x = \emptyset$. In \cite[Lemma 4.1]{JPSWW22}, it is shown that
\[M(\y)-M(\x) = -1,\]
and
\[A(\y)-A(\x) = \#(\Int(p) \cap \mathbb X) + \frac{\left|\Legm\right| - \left|\Legp\right| -1}{2}.\]
Now suppose that $p\in \Pent(\x,\y)$ has $k$ interior double points. Allowing interior double points does not change the relative Alexander grading formula, so 
\[A(v^k\y)-A(\x) = A(\y)-A(\x) = \#(\Int(p) \cap \mathbb X) + \frac{\left|\Legm\right| - \left|\Legp\right| -1}{2}.\] 
However, $\countJ(\y,\y) - \countJ(\x,\x)$ decreases by $2k$ since we add $k$ to account for the double points above and to the right of the bottom left moving component of $\x$, and we add $k$ again to account for the double points below and to the left of the top right moving component of $\x$. So $M(\y)- M(\x) = -1 -2k$.
Since $M(v^k \y) = M(\y)+ 2k$, we still have that $M(v^k \y)-M(\x)=-1$. 

Last, we show that the map $\xswapb$ sends $\xpm(\Gp)$ to $\xpm(\Gm)+\y$ where $\y\in \mathcal F_{A(\xpm(\Gm))-1}\fGCtb(\Gm)$. Similar to \cite[Lemma 3.3]{BLW22}, there is a unique pentagon with no $X$ markings in its interior starting at $\xpm(\Gp)$; this pentagon ends at $\xpm(\Gm)$. Since all other pentagons starting at $\xpm(\Gp)$ contain $X$ markings, their ending generators are in a strictly lower filtration level than $A(\xpm(\Gm))$; all pentagons starting at $\xpm(\Gp)$ with double points in their interior are of this second type.
\end{proof}

\begin{lemma}
\label{prop:o-swap-filt}
Suppose that $\Gp$ is obtained from $\Gm$ by an O swap. Then there exists a filtered chain homomorphism
\[\oswapb: \fGCtb(\Gp) \to \fGCtb(\Gm)\left\llbracket 1, \frac{\left|\Legp\right| - \left|\Legm\right| +1}{2} \right\rrbracket\]
such that
\[\oswapb(\xpm(\Gp) = \xpm(\Gm).\]
\end{lemma}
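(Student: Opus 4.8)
The plan is to adapt the proof of \fullref{prop:x-swap-filt} to the O-swap combined diagram of \fullref{fig:combo}, using the intersection point $b$ of $\alpha$ and $\alpha'$ in place of $a$. Define
\[
  \oswapb(\x) \;=\; \sum_{\y\in\SGm}\ \sum_{p\in\Pent_{\OO}(\x,\y)} v^{\#(\Int(p)\cap\x)}\cdot\y,
\]
the linear map counting $O$-free pentagons with vertex $b$, weighted by the number of interior double points just as for $\xswapb$; if the O-swap combined diagram should admit $O$-free long pentagons (unlike the X-swap diagram), one adds a term $\sum_{p\in\Pent^{\ell}_{\OO}(\x,\y)} v\cdot\y$ exactly as for the commutation map $\commmapb$ of \fullref{sec:comm}, but as for X-swaps I expect none to arise. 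First I would check that $\oswapb$ is a filtered chain map. This is the same juxtaposition bookkeeping as in the proofs of \fullref{prop:x-swap-filt} and \fullref{lem:comm}: a domain that decomposes in two ways as a rectangle together with a (possibly long) pentagon contributes cancelling terms to $\oswapb\circ\bdtb + \bdtb\circ\oswapb$, the two decompositions contributing equal powers of $v$ by the corner-counting lemmas of \cite{T23} invoked already in \fullref{sec:comm}, and marking-free thin-annulus-plus-triangle domains do not appear for the same reason as in the X-swap case. The Maslov grading drops by one and the Alexander behaviour produces the stated shift $\left\llbracket 1, \frac{\left|\Legp\right| - \left|\Legm\right| + 1}{2}\right\rrbracket$ by exactly the computation of \fullref{prop:x-swap-filt}, since inserting interior double points together with the compensating power of $v$ leaves the relative $(M,A)$-bidegrees unchanged.

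The content special to the O-swap is the statement on canonical generators, and it is here that there is no correction term. An O-swap moves no $X$ marking, so $\xpm(\Gp)$ and $\xpm(\Gm)$ occupy nearly the same positions on the combined diagram, and I would verify --- as in \cite[Lemma 3.3]{BLW22} and the analogous argument in \cite{JPSWW22} --- that there is exactly one $O$-free pentagon starting at $\xpm(\Gp)$, that it ends at $\xpm(\Gm)$, and that it is empty. Concretely it is the thin sliver near $b$ between $\alpha$ and $\alpha'$: the two swapped $O$ markings lie to one side of $b$ and the $X$ marking governing $\xpm$ lies outside the thin strip, so this pentagon meets neither $\XX$ nor $\xpm(\Gp)$ in its interior, and no long pentagon starts at $\xpm(\Gp)$. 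Every other pentagon out of $\xpm(\Gp)$ must sweep across one of the swapped $O$ markings, hence is not $O$-free and is not counted by $\oswapb$. Therefore $\oswapb(\xpm(\Gp)) = \xpm(\Gm)$ on the nose; the difference from the X-swap is that there the unique empty pentagon out of $\xpm(\Gp)$ was accompanied by $O$-free pentagons carrying $X$ markings, living in strictly lower filtration, which is what forced the correction term in \fullref{prop:x-swap-filt}.

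I expect the main obstacle to be pinning down the local combinatorics around $b$ precisely enough to justify the two geometric inputs: that the thin annuli of the O-swap combined diagram are marked as claimed, so that the domain analysis of \fullref{sec:comm} and \fullref{prop:x-swap-filt} transfers verbatim; and that the only $O$-free pentagon out of $\xpm(\Gp)$ is the empty one terminating at $\xpm(\Gm)$. Each is elementary once the combined diagram is drawn in coordinates, but both require the meticulous case-checking of \cite{T23} and \cite{JPSWW22}; in particular, the subtlety flagged in the footnotes to the proof of \fullref{lem:comm} --- that a long pentagon in one decomposition need not force its partner to be long --- must be kept in force here too.
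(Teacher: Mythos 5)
There is a genuine gap here: an O swap cannot be handled by a pentagon count, so the core of your construction fails. Since an O swap moves no $X$ marking, the canonical generators $\xpm(\Gp)$ and $\xpm(\Gm)$ coincide except in the single component that must jump from $\alpha$ to $\alpha'$; in particular they agree at $n-1$ points. By \fullref{def:pent}, a pentagon (long or not) connects grid states that agree at exactly $n-2$ points, so applied to $\xpm(\Gp)$ your map can only output states differing from it in two components, and the required identity $\oswapb(\xpm(\Gp)) = \xpm(\Gm)$ is unattainable. The ``thin sliver near $b$'' you invoke is not a pentagon at all: it has the three vertices $(\x \triangle \y) \cup \{b\}$, i.e.\ it is a triangle, and that is exactly the shape the proof must count. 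The paper (following \cite[Lemma 4.2]{JPSWW22} and \cite[Lemma 3.4]{Won17}) defines $\oswapb$ by counting triangles $p \in \Tri_{\OO}(\x,\y)$ with vertex $b$, whose boundary consists of an arc of $\alpha$, an arc of $\alpha'$, and an arc of a vertical circle; such triangles automatically satisfy $\Int(p) \cap \x = \emptyset$, so no $v$-weights appear in the definition of the map at all, in contrast with the X-swap pentagon map.

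The double-point phenomena you anticipate do arise, but only in the chain-map verification: a domain contributing to $\oswapb \circ \bdtb + \bdtb \circ \oswapb$ decomposes as a triangle juxtaposed with a rectangle in exactly two ways (either the two shapes share no corner, or one cuts the unique reflex corner the other way), and the new feature relative to \cite{JPSWW22} is that the rectangles may now contain components of the initial grid state in their interiors; one checks in each configuration that the two rectangles contribute equal powers of $v$ (for instance, each rectangle either contains the whole vertical edge of its triangle in its interior or misses it entirely). The grading and filtration shifts and the statement on canonical generators then follow as in \cite{JPSWW22}, since the triangles counted are the same ones counted there. Your pentagon map, even if it were shown to be a chain map on the O-swap combined diagram (which your sketch does not establish), would in any case have the wrong behavior on $\xpm$, so the approach needs to be replaced by the triangle count rather than repaired.
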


\begin{proof}
Let $\x \in \SGp$ and $\y \in \SGm$. As in \cite{JPSWW22}, let $\Tri(\x,\y)$ be the set of embedded triangles in the combined diagram with non-reflex angles and vertices the points $(\x \triangle \y) \cup \{b\}$. More precisely, If $p \in \Tri(\x,\y)$, then the boundary of $p$ consists of an arc of $\alpha$, an arc of $\alpha'$, and an arc on a vertical circle. Note that with order induced by the boundary orientation on $p$, we encounter $\x, b, \y$ in this order, and that $\Int(p) \cap \x = \emptyset$. Also note that if $\x$ and $\y$ do not agree at exactly $n-1$ points, then $\Tri(\x,\y)$ is empty. 
 Let $\Tri_{\OO}(\x,\y)$ be the subset of $\Tri(\x,\y)$ such that if $p \in \Tri_{\OO}(\x,\y)$, then $\Int(p) \cap \OO = \emptyset$.  We proceed similarly to \cite[Lemma 4.2]{JPSWW22}. Let 
\[\oswapb: \fGCtb(\Gp) \to \fGCtb(\Gm)\] 
be the linear map defined on generators by
\[\oswapb(\x) = \sum_{\y\in\SGm}\ \sum_{p\in \Tri_{\OO}(\x,\y)} \y.\]
This map is a chain map since domains that arise from juxtapositions of triangles and rectangles can be decomposed in exactly two ways as such, just as in \cite[Lemma 4.2]{JPSWW22}; see also \cite[Lemma 3.4]{Won17} for details. This time, however, domains may contain components of the initial grid state in their interior, so there are a couple of new geometric configurations. We carry out the case analysis below.

Suppose $p_1\ast r_1$ is a juxtaposition of a triangle and a rectangle. There are two cases: either the triangle and rectangle share no corners, or they share one corner. 

In the former case, the domain has a unique alternate decomposition, as $r_2 \ast p_2$, where $r_1$ and $r_2$ have the same support, and so do $p_1$ and $p_2$; see the top row of \fullref{fig:tri-big}. The terminal grid state for the two juxtapositions is the same. In this case, either each rectangle contains the entire vertical edge of the respective triangle in its interior, or neither rectangle intersects the vertical edge of the respective triangle. This, $r_1$ and $r_2$ have equal contribution to the power of $v$ in $\oswapxb \circ \bdtb (\x) + \bdtb\circ \oswapxb(\x)$, and hence so do the two juxtapositions. 

In the latter case, the domain has one reflex corner; cutting in the other way at this corner results in the unique alternate decomposition as $r_2 \ast p_2$, with the same terminal grid state as $p_1\ast r_1$; see the bottom row of \fullref{fig:tri-big}. Once again the two rectangles contribute equal powers of $v$ to $\oswapxb \circ \bdtb (\x) + \bdtb\circ \oswapxb(\x)$.

\begin{figure}[H]
    \includegraphics{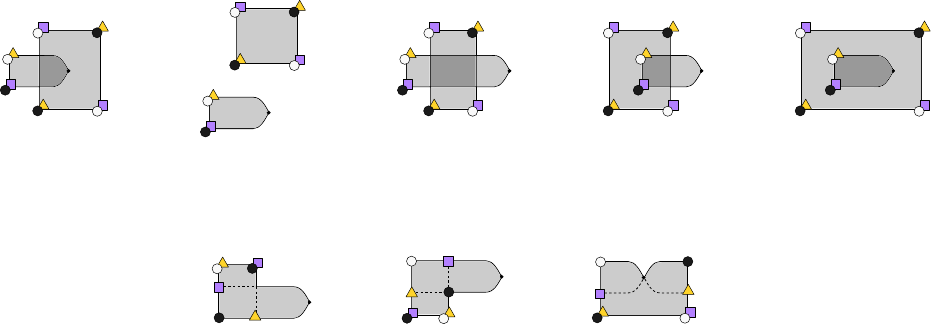}
    \caption{The types of domains that contribute to $\oswapxb \circ \bdtb + \bdtb\circ \oswapxb$. Each domain can be decomposed in two ways that contribute to $\oswapxb \circ \bdtb + \bdtb\circ \oswapxb$, with intermediate grid state depicted by purple squares or yellow triangles, respectively.}
    \label{fig:tri-big}
\end{figure}

Thus, $\oswapxb \circ \bdtb (\x) + \bdtb\circ \oswapxb(\x) = 0$.

Since the triangles we count are the same as those in \cite[Lemma 4.2]{JPSWW22}, we again have that $\oswapxb$ respects the Maslov grading and Alexander filtration, and that $\oswapb(\xpm(\Gp) = \xpm(\Gm)$.
\end{proof}

\subsection{Birth Moves}

\begin{lemma}\label{lem:birth}
Suppose that $\Gp$ is obtained from $\Gm$ by a birth move, with the birth occurring directly to the bottom right of an $O$, as in \fullref{fig:BirthDiag}. Then there exists a filtered chain homomorphism
\[\birthmapb: \fGCtb(\Gp) \to \fGCtb(\Gm)\llbracket -1,0 \rrbracket\]
such that
\[\birthmapb(\xpm(\Gp)) = \xpm(\Gm).\]
\end{lemma}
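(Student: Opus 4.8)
The plan is to mirror the structure of the proof of Lemma~\ref{prop:o-swap-filt}, since a birth move is formally very close to an $O$-swap: in both cases the relevant local modification introduces a new pair of curves, and the chain map will be defined by counting small triangles (or their degenerate analogues) on a combined diagram. First I would set up the combined diagram for $\Gm$ and $\Gp$ near the site of the birth, as indicated in \fullref{fig:BirthDiag}, identifying the new $2\times2$ block created by the birth and the distinguished intersection point (call it $c$) of the two new curves that will serve as the fixed vertex of the counted triangles. Since $\Gp$ has one more row and column than $\Gm$, I would carefully track the size change: generators of $S(\Gp)$ that occupy $c$ correspond bijectively to generators of $S(\Gm)$, and I would define $\birthmapb$ on such a generator $\x$ by counting embedded (not-necessarily-empty) triangles with no $X$ markings in their interior from $\x$ to $\y\in S(\Gm)$, weighted by $v^{\#(\Int(p)\cap\x)}$, exactly as in Lemma~\ref{prop:o-swap-filt} but in the enhanced setting; on generators not occupying $c$ the map is zero (or, following \cite[Lemma 6.4.6]{OSS15} / the $O$-swap template, the structure is arranged so that these do not arise for the canonical generators).

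The key steps, in order: (1) verify $\birthmapb$ is a chain map by the usual juxtaposition argument — domains arising as a triangle glued to a rectangle decompose in exactly two ways, and in the enhanced theory I must additionally check that both decompositions contribute the same power of $v$; this is the same case analysis as in the proof of Lemma~\ref{prop:o-swap-filt} (triangle and rectangle sharing zero or one corners), with the new feature that interior components of the starting generator may be present, handled exactly as there by the observation that a thin vertical edge of a triangle is either entirely inside or entirely outside the interior of the rectangle it is glued to. (2) Check the grading shift: the triangles here are the analogues of those in the $O$-swap / birth arguments of \cite{JPSWW22, Won17}, so the Maslov grading is preserved up to the shift $\llbracket -1, 0\rrbracket$ coming from the change in grid size (here $\abs{\Legp}=\abs{\Legm}$ and $\chi$ behaves as for a birth, so the second index is $0$), and the Alexander filtration is respected because the counted triangles contain no $O$ markings — introducing interior double points does not change the relative Alexander grading, and the compensating $v^k$ factor fixes the Maslov grading, just as computed in the proof of Lemma~\ref{prop:x-swap-filt}. (3) Evaluate on canonical generators: there is a unique $X$-free triangle starting at $\xpm(\Gp)$, it is empty (lies in a thin strip and contains no markings, in particular no double points), and it ends at $\xpm(\Gm)$; every other triangle from $\xpm(\Gp)$ contains an $X$ marking and hence lands in a strictly lower filtration level — but since the claimed identity $\birthmapb(\xpm(\Gp))=\xpm(\Gm)$ is an equality on the nose (no lower-order term), I would argue that in fact no other $X$-free contribution exists, so the whole sum reduces to the single term.

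The main obstacle I anticipate is step (1) in the enhanced setting — specifically, making sure that when the glued domain fails to embed in a fundamental domain, or when a double point of the starting generator sits on or near the shared edge, the two decompositions still contribute equal powers of $v$. This is the same subtlety that forced the careful footnoted corrections to \cite{T23} in the proof of \fullref{lem:comm}, and I would resolve it by the same device: a finite case analysis on how many corners of each shape lie in the interior of the other, appealing to \cite[Lemma 4.2]{T23} for the rectangle–rectangle count and checking the triangle–rectangle variants directly, noting that the new configurations involving interior double points cancel in pairs just as the analogous configurations did in \fullref{fig:tri-big}. A secondary, more bookkeeping-flavored obstacle is correctly accounting for the grid-size change so that the shift $\llbracket -1, 0\rrbracket$ comes out exactly right; I would pin this down by comparing directly with the birth map of \cite[Lemma 4.3]{JPSWW22} (or its analogue), since the underlying triangle counts are identical and only the $v$-weighting is new.
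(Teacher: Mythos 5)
Your proposal models the birth on the $O$-swap and tries to define $\birthmapb$ by counting triangles on a combined diagram with a fixed vertex at a single intersection point $c$ of the new curves, identifying the generators of $\SGp$ that occupy $c$ with $\SGm$ and declaring the map zero elsewhere. This is not what the paper does, and as stated it cannot work. A birth enlarges the grid by two rows and two columns, so a generator of $\SGp$ has \emph{two} more components than a generator of $\SGm$; occupying one distinguished point accounts for only one of them, so your claimed bijection does not exist. The correct bookkeeping uses both intersection points $a$ and $b$ of the new curves: only generators containing both (the set $\AB$) biject with $\SGm$ via $\x \mapsto \x \setminus \set{\bira, \birb}$. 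Worse, the canonical generators $\xpm(\Gp)$ contain $b$ but \emph{not} $a$, i.e.\ they lie in $\NB$, so any map that merely identifies the $\{a,b\}$-occupying generators with $\SGm$ and vanishes on the rest would send $\xpm(\Gp)$ to $0$ rather than to $\xpm(\Gm)$. The essential content of the proof, which your outline omits entirely, is the intermediate map $\birthpsimap \colon \NBt \to \ABt$ counting rectangles \emph{inside} $\Gp$ that cross the new $2\cross 2$ block: by definition these rectangles contain $O_2$, $O_3$, $X_2$, $X_3$ and the point $b$ in their interior, and in the enhanced setting they are weighted by $v^{\#(\Int(p)\cap(\x\setminus\set{\birb}))}$. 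The birth map is the composition $\birthemap \comp \birthpsimap \comp \birthpimap$, and the chain-map argument leans on the algebraic structure you never set up, namely that $\NNt \subset \NBt\dirsum\NNt \subset \ANt\dirsum\NBt\dirsum\NNt$ are subcomplexes because any rectangle terminating at $a$ or $b$ contains an $O$ marking. Nothing in your triangle-based sketch produces this, and no triangle map interpolating between generating sets of different cardinalities is available.

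Several subsidiary claims also fail. The unique domain carrying $\xpm(\Gp)$ to (the $\AB$-preimage of) $\xpm(\Gm)$ is not an empty, marking-free triangle in a thin strip: it is a rectangle that necessarily contains $X_2$, $X_3$, $O_2$, $O_3$ and has $b$ in its interior, which is exactly why the definition of $\RectAB{\x}{\y}$ permits these markings; its uniqueness comes from the fact that $a$ must be the northwest corner and the rectangle must extend to the components of $\x$ on $\alpha_1$ and $\beta_1$. A birth adds an unknot component, so $\abs{\Legp} = \abs{\Legm} + 1$, not $\abs{\Legp} = \abs{\Legm}$. Finally, the birth map of \cite[Lemma 4.3]{JPSWW22} is also a rectangle count, not a triangle count, so the comparison you propose at the end does not rescue the construction; the genuinely new work in the enhanced setting is to relax the emptiness condition in $\RectAB{\x}{\y}$ (the containment $\Int(p)\cap\x = \Int(p)\cap\y \supseteq \set{\birb}$, weighted by powers of $v$) and to check that the new juxtapositions, in which a corner of one rectangle lies in the interior of the other, still cancel in pairs with matching powers of $v$.
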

    
\begin{proof}
The strategy of this proof will be to extend the birth maps in \cite[Proposition 3.9]{BLW22} and \cite[Lemma 4.3]{JPSWW22} to allow rectangles whose interior contains $X$ markings, components of the initial grid state, or both. We include the full set up here for the sake of completeness.

Define the points $\bira$ and $\birb$ of $\Gp$ as in \fullref{fig:BirthDiag}. 
\begin{figure}[ht]
  \vspace{.2cm}
  \labellist
  \pinlabel $O_1$ at 10 45
  \pinlabel $\beta_1$ at 20 62
  \pinlabel $\alpha_3$ at 47 38
  \pinlabel $\beta_1$ at 100 82
  \pinlabel $\beta_2$ at 120 82
  \pinlabel $\beta_3$ at 140 82
  \pinlabel $\alpha_1$ at 166 59
  \pinlabel $\alpha_2$ at 166 39
  \pinlabel $\alpha_3$ at 166 18
  \pinlabel $O_1$ at 90 68
  \pinlabel $O_2$ at 110 47
  \pinlabel $O_3$ at 132 25
  \pinlabel $X_2$ at 110 25
  \pinlabel $X_3$ at 132 47
  \pinlabel {\small{$a$}} at 103 54
  \pinlabel {\small{$b$}} at 123 32
  \endlabellist
  \includegraphics[scale=1]{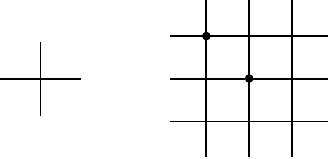}
  \caption{Left: Part of the grid diagram $\G_-$. Right: The corresponding part of the diagram $\G_+$ obtained from $\G_-$ by a birth move.}
  \label{fig:BirthDiag}
\end{figure}
Then, the generating set $\SGp$ can be expressed as the disjoint union
\[\SGp = \AB \disjunion \AN \disjunion \NB \disjunion \NN\]
where
\begin{itemize}
    \item $\AB$ consists of $\x \in \SGp$ with $\bira,\birb \in \x$;
    \item $\AN$ consists of $\x \in \SGp$ with $\bira \in \x$ and $\birb \notin \x$;
    \item $\NB$ consists of $\x \in \SGp$ with $\bira \notin \x$ and $\birb \in \x$; and
    \item $\NN$ consists of $\x \in \SGp$ with $\bira,\birb \notin \x$.
\end{itemize}

This decomposition of the generating set induces a decomposition of the vector space $\fGCtbGp$ as a direct sum
\[\fGCtbGp = \ABt \dirsum \ANt \dirsum \NBt \dirsum \NNt\]
where the summands are the subspaces generated by the corresponding subsets of $\SGp$. Note, we have a sequence of subcomplexes
\[\NNt \subset \NBt \dirsum \NNt \subset \ANt \dirsum \NBt \dirsum \NNt \subset \fGCtbGp,\]
just as in \cite{JPSWW22}.
This follows from the observation that any rectangle terminating at either $\bira$ or $\birb$ must necessarily contain an $O$, and thus is not counted by the differential. Let 
\[\ABcomplex\]
be the quotient complex of $\fGCtbGp$ by $\ANt \dirsum \NBt \dirsum \NNt$. There is a natural bijection of the generators in $\AB$ and those in $\SGm$ given by
\[\x \mapsto \xpr \coloneq \x \setminus \set{\bira,\birb}.\]
Note that there is further a bijection between rectangles that contain no $O$ markings between grid states in $\AB$ and rectangles that contain no $O$ markings in $\Gm$, since any rectangle in $\Gp$ from a grid state in $\AB$ to another grid state in $\AB$ which contains in its interior the $2 \cross 2$ block corresponding to the birth move must necessarily contain $O_1$ as well.
Thus, the bijection on grid states extends linearly to an isomorphism
\[\birthemap: \ABcomplex \to (\fGCtbGm, \bdtb)\]
of chain complexes, since for $\x,\y \in \AB$ there is also a bijection
\[\Rect_{\OO}(\x,\y) \to \Rect_{\OO}(\x',\y').\]

Next, we see that the bijection respects the Maslov--Alexander bigrading. Fix $\x\in \AB$. Let $x_1$ and $o_1$ be the number of components of $\x$ and the number of $O$ markings, respectively, to the bottom left of the $2 \cross 2$ block corresponding to the birth move. Let $x_2$ and $o_2$ be the number of components of $\x$ and the number of $O$ markings, respectively, to the top right of the $2 \cross 2$ block. Observe that $\J(\x,\x) = \J(\x',\x') + 2x_1 + 2x_2$, $\J(\x,\OO) = \J(\x',\OO) + x_1 + x_2 + o_1+o_2$, and $\J(\OO,\OO)$ increases by $2o_1+2o_2$ as we move from $\Gm$ to $\Gp$, so $M_{\OO}(\x) = M_{\OO}(\x')$. Similarly, $M_{\XX}(\x) = M_{\XX}(\x')+1$, since the pair $(X_2,X_3)$ contributes an additional count to $\J(\XX, \XX)$ in $\Gp$. Thus, $e$ respects the bigrading. 

Let $\OO$ and $\XX$ denote the sets of $O$ markings and $X$ markings in $\Gp$, respectively. Given $\x \in \NB$ and $\y \in \AB$, we define
\[\RectAB{\x}{\y} \subset \RectGp{\x}{\y}\]
as the subset of rectangles $p$ satisfying
\begin{itemize}
    \item $p \intersect \paren{\OO \union \set{O_2,O_3}} = \set{O_2,O_3}$
    \item $p \intersect \paren{\XX \union \set{X_2,X_3}} \supseteq \set{X_2,X_3}$
    \item $\Int(p) \intersect \x = \Int(p) \intersect \y \supseteq \set{b}.$
\end{itemize}
The third item on this list is what differs from \cite{JPSWW22}, where equality instead of containment was required.

Let
\[\birthpsimap: \NBt \to \ABt \]
be the linear map defined on generators by counting these rectangles as follows:
\[\birthpsimap(\x) = \sum_{\y \in \AB} \sum_{p \in \RectAB{\x}{\y}} v^{\#(\Int(p) \intersect \paren{\x \setminus \set{b}})}\y.\]

Let
\[\birthpimap: \fGCtbGp \to \NBt\]
be the projection onto the summand $\NBt$.

Lastly, define
\[\birthmapb: \fGCtbGp \to \fGCtbGm\]
as the linear map given by the composition
\[\birthmapb =\birthemap \comp \birthpsimap \comp \birthpimap.\]

With the map $\birthmapb$ now defined, we will first show that
\[\birthmapb(\xpm(\Gp)) = \xpm(\Gm).\]
Note, $\xpm(\Gp) \in \NB$, so
\[\birthpimap(\xpm(\Gp)) = \xpm(\Gp).\]
As seen in \fullref{fig:BX}, there is a unique rectangle contributing to $\birthpsimap(\xp(\Gp))$ and a unique rectangle contributing to $\birthpsimap(\xm(\Gp))$. Since rectangles $p \in \RectAB{\x}{\y}$ must terminate at a grid state $\y$ with $a \in \y$, $a$ must be the northwest corner of $p$. The northern and western edges of $p$ must then extend until they reach the unique components of $\x$ which lie on $\al_1$ and $\be_1$. \fullref{fig:BX} also shows that composing with $\birthemap$ then yields 
\[\birthemap(\birthpsimap(\xpm(\Gp))) = \xpm(\Gm).\]
Hence,
\[\birthmapb(\xpm(\Gp)) = \xpm(\Gm).\]

\begin{figure}[ht]
        \labellist
    \pinlabel $O_1$ at 6 71
    \pinlabel $O_2$ at 24 53
    \pinlabel $O_3$ at 43 34
    \pinlabel $X_2$ at 24 34
    \pinlabel $X_3$ at 43 53
    \pinlabel $X_1$ at 6 -3
    \pinlabel $X_1$ at 145 18
    \pinlabel $O_1$ at 145 51
    \pinlabel $O_1$ at 225 63
    \pinlabel $O_3$ at 262 27
    \pinlabel $O_2$ at 243 45
    \pinlabel $X_2$ at 262 45
    \pinlabel $X_3$ at 243 27
    \pinlabel $X_1$ at 298 63
    \pinlabel $O_1$ at 382 45
    \pinlabel $X_1$ at 414 45
    \pinlabel $e$ at 96 45
    \pinlabel $e$ at 335 45
    \pinlabel $\grid_+$ at 33 -15
    \pinlabel $\grid_-$ at 150 -15
    \pinlabel $\grid_+$ at 262 -15
    \pinlabel $\grid_-$ at 401 -15
    \pinlabel $\vdots$ at 58 18
    \pinlabel $\vdots$ at 162 39
    \pinlabel $\dots$ at 280 9
    \pinlabel $\dots$ at 399 26
    \endlabellist
    \vspace{.6cm}
  \includegraphics[scale=1]{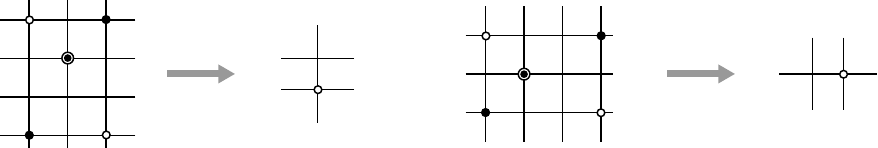}
  \vspace{.6cm}
  \caption{Left: In $\grid_+$, the generator $\xp (\grid_+)$ (black circles) and the generator $\y = \birthpsimap (\xp (\grid_+))$ (white circles). The corresponding generator $\birthmapb (\xp (\grid_+)) = \xp (\grid_-)$ is shown in $\grid_-$. Right: The analogous illustration for $\xm$.}
  \label{fig:BX}
\end{figure}

Next, we will show that $\birthmapb = \birthemap \comp \birthpsimap \comp \birthpimap$ is a chain map. That is, we need to show
\[\bdtb \comp (\birthemap \comp \birthpsimap \comp \birthpimap) + (\birthemap \comp \birthpsimap \comp \birthpimap) \comp \bdtb = 0.\]
Note, both left hand side terms vanish on $\x \in \AN \union \NN$. The first term vanishes because
\[\birthpimap(\x) = 0\]
and the second because
\[\birthpimap \comp \bdtb(\x) = 0.\]
This is because $\x$ cannot be taken to some $\y$ with $b \in \y$ by $\bdtb$ since $b$ has $O$ markings to its northwest and southeast. Hence, we can restrict our focus to $\x \in \AB \cup \NB$. Given $\y \in \SGm$, the coefficients of $\y$ in $(\bdtb \comp \birthmapb)(\x)$ and $(\birthmapb \comp \bdtb)(\x)$
count the juxtapositions of rectangles from $\x$ to $\y$ of the form $p \ast r$ and $r \ast p$, respectively, where $p$ are rectangles that contribute to $\birthpsimap$ and $r$ are rectangles that contribute to $\bdtb$. The cases where the interiors of $p$ and $r$ do not contain any components of $\x$ and $\y$ are covered in \cite[Lemma 3.10]{BLW22}. Additionally, the cases covered in \cite[Lemma 3.10]{BLW22} are geometrically similar to the cases here where the interiors of $p$ and $r$ only contain components of $\x$ and $\y$ which are not corners of $p$ or $r$. 
Last, suppose a domain from $\x$ to $\y$ decomposes as a juxtaposition of a rectangle $r$ that contributes to $\bdtb$ and a rectangle $p$ that contributes to $\birthmapb$ (regardless of order), and the interior of one rectangle contains a component of $\x$ and/or $\y$ which is a corner of the other rectangle. Then the domain has a unique alternate decomposition that contributes to $(\bdtb \comp \birthmapb)(\x) + (\birthmapb \comp \bdtb)(\x)$, where the rectangles has the same support as $r$ and $p$ but come in the opposite order, and the terminal state is $\y$ again.  The two juxtapositions contribute the same total power of $v$ to the coefficient of $\y$. See \fullref{fig:birth1} and \fullref{fig:birth2} for the possible cases of these types of domains. Hence, $\birthmapb$ is a chain map.
\begin{figure}[ht]
    \labellist
        \pinlabel \small{$O_2$} at 103 330
        \pinlabel \small{$X_2$} at 103 315
        \pinlabel \small{$X_3$} at 118 330
        \pinlabel \small{$O_3$} at 118 315
        \pinlabel \small{$O_2$} at 199 330
        \pinlabel \small{$X_2$} at 199 315
        \pinlabel \small{$X_3$} at 214 330
        \pinlabel \small{$O_3$} at 214 315
        \pinlabel \small{$O_2$} at 86 216
        \pinlabel \small{$X_2$} at 86 201
        \pinlabel \small{$X_3$} at 101 216
        \pinlabel \small{$O_3$} at 101 201
        \pinlabel \small{$O_2$} at 200 197
        \pinlabel \small{$X_2$} at 200 182
        \pinlabel \small{$X_3$} at 215 197
        \pinlabel \small{$O_3$} at 215 182
        \pinlabel \small{$O_2$} at 31 82
        \pinlabel \small{$X_2$} at 31 67
        \pinlabel \small{$X_3$} at 46 82
        \pinlabel \small{$O_3$} at 46 67
        \pinlabel \small{$O_2$} at 151 82
        \pinlabel \small{$X_2$} at 151 67
        \pinlabel \small{$X_3$} at 165 82
        \pinlabel \small{$O_3$} at 165 67
        \pinlabel \small{$O_2$} at 254 96
        \pinlabel \small{$X_2$} at 254 81
        \pinlabel \small{$X_3$} at 269 96
        \pinlabel \small{$O_3$} at 269 81
    \endlabellist
    \includegraphics[scale=1]{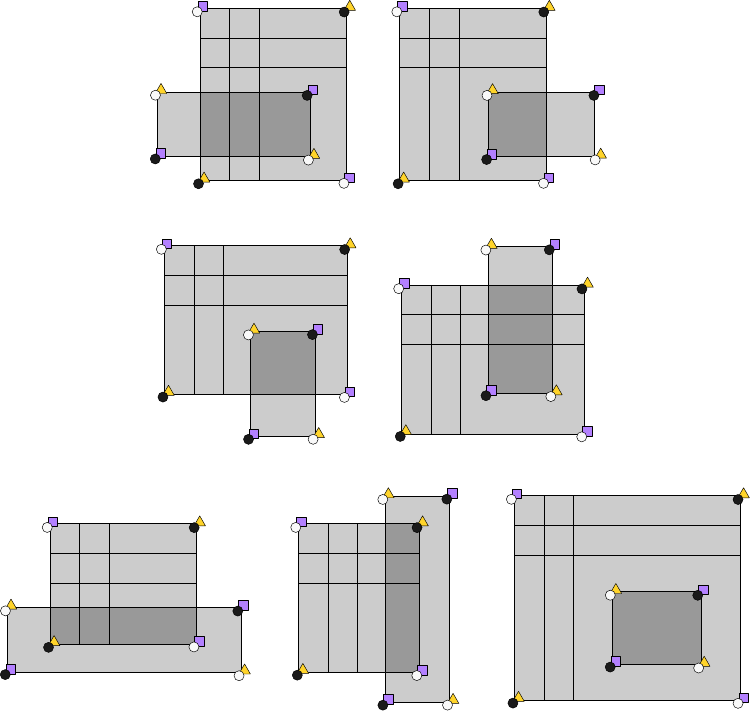}
    \caption{The types of domains that contribute to $\bdtb \comp \birthmapb+\birthmapb \comp \bdtb$ such that one rectangle contains a corner of the other in its interior: The case where one rectangle contains an edge of the other in its interior.}
\label{fig:birth1}
\end{figure}
\begin{figure}[ht]
    \labellist
        \pinlabel \small{$O_2$} at 32 75
        \pinlabel \small{$X_2$} at 32 90
        \pinlabel \small{$X_3$} at 47 75
        \pinlabel \small{$O_3$} at 47 90
        \pinlabel \small{$O_2$} at 139 75
        \pinlabel \small{$X_2$} at 139 90
        \pinlabel \small{$X_3$} at 154 75
        \pinlabel \small{$O_3$} at 154 90
        \pinlabel \small{$O_2$} at 270 53
        \pinlabel \small{$X_2$} at 270 68
        \pinlabel \small{$X_3$} at 285 53
        \pinlabel \small{$O_3$} at 285 68
    \endlabellist
    \includegraphics[scale=1]{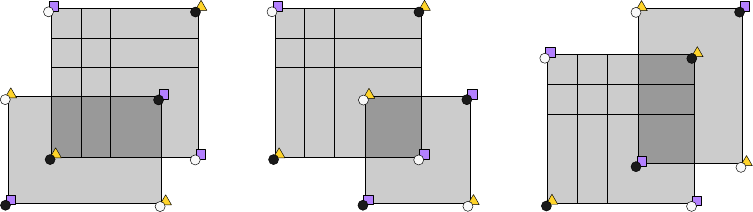}
    \caption{The types of domains that contribute to $\bdtb \comp \birthmapb+\birthmapb \comp \bdtb$ such that one rectangle contains a corner of the other in its interior: The case where one rectangle does not contain an edge of the other in its interior.}
    \label{fig:birth2}
\end{figure}

Note, the maps $\birthemap$ and $\birthpimap$ are both homogeneous with respect to the Maslov grading and the Alexander filtration. Now, we focus on the map $\birthpsimap$. Let $\x \in \NB$ and $\y \in \AB$ such that $\RectAB{\x}{\y} \neq \emptyset$. Let $p \in \RectAB{\x}{\y}$ be the rectangle connecting the two generators. Let $k = \#(\Int(p) \intersect (\x \intersect \y))$ and $\ell = \#(\Int(p) \intersect \XX)$. Then, \fullref{eq:maslov-gr-rel} and \fullref{eq:alex-gr-rel} imply that
\begin{align*}
M(\x)-M(\y) &= 2k - 1; \text{ and}\\
A(\x)-A(\y) &= \ell.
\end{align*}
Therefore,
\begin{align*}
M(\birthpsimap(\x)) &= M(v^k\y) = M(\y)+2k = M(\x)+1; \text{ and}\\
A(\birthpsimap(\x)) &= A(v^k\y) = A(\y) = A(\x)-\ell.
\end{align*}
It follows that $\birthmapb$ is a filtered chain map viewed as a map to $\fGCtbGm\llbracket -1,0 \rrbracket$.
\end{proof}

\subsection{Proof of weak functoriality and obstruction}

We now show that $\ltpb$ and $\ltmb$ satisfy a weak functoriality under decomposable Lagrangian cobordisms.

\begin{proof}[Proof of \fullref{thm:functoriality}]
First, note that the filtered maps from \fullref{prop:x-swap-filt}, \fullref{prop:o-swap-filt}, and \fullref{lem:birth} 
descend to bigraded (with appropriate degree shifts) homomorphisms on the associated graded objects $\GHtb$ that send $\ltpmb$ to $\ltpmb$.

Suppose that $\Gm$ and $\Gp$ are grid diagrams that represent Legendrian links $\Legm$ and $\Legp$, and suppose that there exists a decomposable Lagrangian cobordism $\Lag$ from $\Legm$ to $\Legp$. The $\Lag$ is isotopic to a composition of Legendrian isotopies, pinches, and births. It follows that $\Gp$ can be obtained from $\Gm$ by a sequence of commutations, (de)stabilizations of type \textit{X:SE} or \textit{X:NW}, $X$ swaps, an $O$ swaps, and birth moves of the type shown in \fullref{fig:BirthDiag}. \fullref{thm:legh}, \fullref{prop:x-swap-filt}, \fullref{prop:o-swap-filt}, and \fullref{lem:birth} then imply the existence of the desired homomorphism from $\GHtb(\Gp)$ to $\GHtb(\Gm)$. 
\end{proof}

\begin{proof}[Proof of \fullref{thm:cob}]
This is immediate from \fullref{thm:functoriality} and \fullref{prop:tilde-hat}.
\end{proof}

\begin{proof}[Proof of \fullref{cor:filling}]
A decomposable Lagrangian filling of $\Leg$ is a composition of a birth and a decomposable Lagrangian cobordism from the undestabilizable Legendrian unknot $\Leg_U$ to $\Leg$. Since $\GC$ $\lhpmb(\Leg_U) \neq 0$, the corollary follows from \fullref{thm:cob}.
\end{proof}

\section{Comparison with the GRID invariants}\label{sec:comp}

In this section, we compare the GRID invariants with the double-point enhanced GRID invariants and discuss some computations. First, we show that triviality in the double-point enhanced setting implies triviality in the non-enhanced setting:

\begin{proof}[Proof of \fullref{thm:compare}]
    Let $\Leg$ be a Legendrian link, and suppose $\lhpb(\Leg) = 0$. Let $\grid$ be a grid diagram for $\Leg$. Then $\ltpb(\grid) = 0$, and so it must be that there is an element $\y\in\GCtb(\grid)$ with $\bdtoxb(\y) = \xp$. From the definition of the differential, we see that $\bdtoxb(\y) - \bdtox(\y) \in v \GCtb(\grid)$ so it must be that $\bdtox(\y) = \xp$. Thus, $\lambdatp(\grid) =0$, i.e.\ $\lambdahp(\Leg) =0$. Similarly, $\lhmb(\Leg) = 0$ implies $\lambdahm(\Leg) =0$.
\end{proof}

While we do not know if the converse of \fullref{thm:compare} is true, we have yet to find a counterexample. 
We have computed $\lambdahpm$ and $\lhpmb$ for almost all Legendrian representatives with  maximal Thurston-Bennequin number for knots of arc index up to $11$.\footnote{The majority of the computations were carried out by the second author, as part of his senior thesis.} 
In all examples we computed, the double-point enhanced GRID invariants were nonzero if and only if the (non-enhanced) GRID invariants were nonzero.

\bibliographystyle{mwamsalphack}
\bibliography{references}

\end{document}